\documentclass[11pt,reqno]{amsart}

\usepackage[T1]{fontenc}
\usepackage{lmodern}
\usepackage{microtype}
\usepackage[a4paper,left=0.86in,right=0.86in,top=0.82in,bottom=0.88in]{geometry}
\usepackage{amsmath,amssymb,mathtools}
\usepackage{hyperref}
\usepackage[nameinlink,noabbrev]{cleveref}

\hypersetup{
  colorlinks=true,
  linkcolor=blue,
  citecolor=blue,
  urlcolor=blue,
  pdftitle={Sharp mean-width and Jacobian bounds for Euclidean and hyperbolic harmonic maps},
  pdfauthor={Deguang Zhong and David Kalaj}
}

\newtheorem{theorem}{Theorem}[section]
\newtheorem{corollary}[theorem]{Corollary}
\newtheorem{proposition}[theorem]{Proposition}
\newtheorem{lemma}[theorem]{Lemma}
\theoremstyle{remark}
\newtheorem{remark}[theorem]{Remark}

\newcommand{\R}{\mathbb R}
\newcommand{\B}{\mathbb B}
\newcommand{\Sph}{\mathbb S}

\newcommand{\co}{\operatorname{co}}

\newcommand{\Ph}{P_h}
\newcommand{\Dh}{\Delta_h}

\newcommand{\id}{\mathrm{id}}
\newcommand{\tr}{\operatorname{tr}}

\title[Mean-width and Jacobian bounds for harmonic maps]
{Sharp mean-width and Jacobian bounds\\
for Euclidean and hyperbolic harmonic maps}

\author{Deguang Zhong}
\address{Institute of Applied Mathematics, Shenzhen Polytechnic University, Shenzhen 518055, China}
\email{huachengzhon@163.com}
\author{David Kalaj}
\address{Faculty of Natural Sciences and Mathematics, University of Montenegro, 81000 Podgorica, Montenegro}
\email{davidk@ucg.ac.me}

\date{September 6, 2026}
\subjclass[2020]{Primary 31B05, 52A40; Secondary 31A05, 31B10, 30C80, 52A21, 52A38, 58J05}
\keywords{harmonic mapping, hyperbolic harmonic mapping, monotone zonal operator, Poisson kernel, Poisson--Szeg\H{o} kernel, convex hull, mean width, intrinsic volume, rigidity, Jacobian}

\begin{document}

\begin{abstract}
We prove a sharp mean-width inequality for monotone zonal operators and
derive global and differential bounds for Euclidean and hyperbolic-harmonic
self-maps of the unit ball.  Let $k:[0,\pi]\to\mathbb R$ be continuous
and nonincreasing, and let $T_k$ be the associated zonal integral operator,
\[
 (T_kF)(\xi)=\int_{\mathbb S^{n-1}}
 k\!\left(\arccos\langle\xi,\eta\rangle\right)
 F(\eta)\,d\sigma(\eta),
\]
where $\sigma$ is normalized surface measure.  For every measurable
$F:\mathbb S^{n-1}\to\overline{\mathbb B^n}$, we prove
\[
 w\!\left(\operatorname{co}T_kF(\mathbb S^{n-1})\right)
 \le2\lambda_1(k),
\]
where $w$ denotes mean width, normalized by $w(\overline{\mathbb B^n})=2$,
and $\lambda_1(k)$ is the eigenvalue of $T_k$ on the space of spherical
harmonics of degree one.  For strictly decreasing kernels, equality
holds exactly for $F(\eta)=Q\eta$ almost everywhere, with $Q\in O(n)$.
For the ordinary Poisson kernel, the multiplier is $r$, yielding sharp
mean-width, intrinsic-volume, and image-volume contraction.  In particular,
$|f(r\mathbb B^n)|\le\omega_n r^n$ without injectivity assumptions,
answering the area and higher-dimensional volume question of Koh and Kovalev.

For componentwise hyperbolic-harmonic self-maps, the corresponding sharp
comparison radius is
\[
 \Lambda_n(r)=\frac{2(n-1)}n\,r\,
 {}_2F_1\!\left(1,1-\frac n2;\frac{n+2}2;r^2\right).
\]
Thus the image volume is at most $\omega_n\Lambda_n(r)^n$, with equality
precisely for Poisson--Szeg\H{o} extensions of orthogonal boundary data.
Hyperbolic domain automorphisms also give sharp bounds for the mean width
of the derivative ellipsoid and for the determinant at every point.  For ordinary harmonic maps, we obtain an exact
variational formula for the pointwise Jacobian supremum, reduce it to a
one-variable minimization with a unique minimizer, and prove strict
anisotropy of the extremal
derivative away from the origin when $n\ge3$.  An explicit
Poisson-information bound is strictly larger than the exact supremum
in that regime.
Applications include zonal semigroups, Henyey--Greenstein scattering,
and bounded zonoid depth regions.
\end{abstract}

\maketitle

\section{Introduction}

\subsection{Historical background and the area-contraction problem}

Harmonic mappings connect potential theory with geometric function theory.
For planar univalent mappings, the modern theory begins with Clunie and
Sheil-Small \cite{ClunieSheilSmall}; Lewy's theorem \cite{Lewy} gives the
classical local Jacobian criterion, and Duren \cite{Duren} provides a
systematic account.  Harmonic analogues of the Schwarz lemma give
pointwise and differential bounds: see Heinz \cite{Heinz}, Hethcote
\cite{Hethcote}, Colonna \cite{Colonna}, Chen \cite{Chen},
Kalaj--Vuorinen \cite{KalajVuorinen}, and, in higher dimensions,
Dai--Pan \cite{DaiPan2015,DaiPan}.  Our global estimates concern the
size of image sets.

The area-contraction problem asks how a harmonic mapping changes the
area of a concentric subdisk.  Koh and Kovalev proved that if
\[
 f:\mathbb D\to\mathbb D
\]
is a harmonic self-homeomorphism, then
\[
 |f(r\mathbb D)|\le \pi r^2,\qquad 0<r<1;
\]
see \cite{KohKovalev}.   Question~1 from \cite{KohKovalev}  asks whether the same comparison
holds for arbitrary harmonic self-maps and whether an analogous
statement holds in higher dimensions.
The planar question is also attributed to Koh and Kovalev in
Problem~3.25 of the list compiled by Bshouty and Lyzzaik
\cite{BshoutyLyzzaik}.  A recent preprint of Geleta \cite{Geleta}
claims the planar area inequality for sense-preserving univalent
self-maps of the disk.  Our argument is independent and uses a mean-width
estimate for convex hulls.  It applies without injectivity or orientation
assumptions in every dimension $n\ge2$.

The corresponding question for Euclidean balls is the following.  If
\[
 f:\mathbb B^n\longrightarrow\mathbb B^n,
 \qquad n\ge2,
\]
is harmonic coordinatewise, must one have
\begin{equation}\label{eq:intro-volume-question}
 |f(r\mathbb B^n)|
 \le
 |r\mathbb B^n|
 =
 \omega_n r^n,
 \qquad 0<r<1,
\end{equation}
without any injectivity assumption?  A Jacobian argument must account for multiplicity: the area formula
counts repeated coverings, whereas the Lebesgue measure of
$f(r\mathbb B^n)$ counts the image only once.  For example, write a planar harmonic map as $f=h+\overline g$, with
$h,g$ holomorphic, so that $J_f=|h'|^2-|g'|^2$.
Already for the analytic map $f(z)=z^N$, with $N\ge2$,
\[
 \int_{\mathbb D_r}J_f\,dA
 =
 N\pi r^{2N},
 \qquad
 |f(\mathbb D_r)|
 =
 \pi r^{2N}.
\]
Thus an identity between image area and the integral of the Jacobian
cannot be used without accounting for multiplicity.

We obtain the volume bound from a stronger statement: the convex hull of
the spherical trace has mean width at most that of the radius-$r$ ball.
The harmonic convex-hull property and Urysohn's inequality then control
the image volume.  The underlying operator estimate uses radial monotonicity and therefore
applies to a broader class of zonal kernels.
We therefore state the zonal theorem first and derive the harmonic
results as consequences.

\subsection{A sharp contraction principle for monotone zonal operators}

For a nonempty compact convex set $K\subset\mathbb R^n$, let
\[
 h_K(u)=\sup_{x\in K}\langle x,u\rangle,
 \qquad u\in\mathbb S^{n-1},
\]
and let $\sigma$ denote normalized spherical measure.  We use the
normalization
\begin{equation}\label{eq:introMeanWidth}
 w(K)
 =
 \int_{\mathbb S^{n-1}}
 \bigl(h_K(u)+h_K(-u)\bigr)\,d\sigma(u)
 =
 2\int_{\mathbb S^{n-1}}h_K(u)\,d\sigma(u),
\end{equation}
so that $w(R\overline{\mathbb B^n})=2R$.

Let
\[
 d(\xi,\eta)=\arccos\langle\xi,\eta\rangle.
\]
Given a continuous nonincreasing function
$k:[0,\pi]\to\mathbb R$, define the zonal operator
\[
 T_kF(\xi)
 =
 \int_{\mathbb S^{n-1}}
 k(d(\xi,\eta))F(\eta)\,d\sigma(\eta).
\]
The same notation will be used for the corresponding operator on
scalar-valued functions.

Let
\[
 \mathcal H_1(\mathbb S^{n-1})
 =
 \left\{
 Y_a:\mathbb S^{n-1}\to\mathbb R:
 Y_a(\eta)=\langle a,\eta\rangle,\quad a\in\mathbb R^n
 \right\}.
\]
Thus $\mathcal H_1(\mathbb S^{n-1})$ is the space of spherical
harmonics of degree one and
\[
 \dim \mathcal H_1(\mathbb S^{n-1})=n.
\]

\begin{lemma}[The degree-one multiplier]
\label{lem:degree-one-multiplier}
Let $n\ge2$ and let $k:[0,\pi]\to\mathbb R$ be continuous.  There exists
a unique scalar $\lambda_1(k)$ such that
\[
 T_kY=\lambda_1(k)Y
 \qquad
 \text{for every }Y\in\mathcal H_1(\mathbb S^{n-1}).
\]
Equivalently,
\[
 T_k|_{\mathcal H_1}
 =
 \lambda_1(k)I_{\mathcal H_1}.
\]
Moreover,
\begin{equation}\label{eq:intro-lambda1}
 \lambda_1(k)
 =
 c_n\int_0^\pi
 k(t)\cos t\,\sin^{n-2}t\,dt,
 \qquad
 c_n=\frac{|\mathbb S^{n-2}|}{|\mathbb S^{n-1}|}.
\end{equation}
\end{lemma}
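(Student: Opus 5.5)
The plan is to compute $T_kY_a$ directly, using rotation invariance of $\sigma$ together with the Funk--Hecke-type slice decomposition of the sphere. Fix $a\in\mathbb R^n$ and $\xi\in\mathbb S^{n-1}$. Then
\[
 T_kY_a(\xi)=\Bigl\langle a,\int_{\mathbb S^{n-1}}k(d(\xi,\eta))\,\eta\,d\sigma(\eta)\Bigr\rangle,
\]
so it suffices to compute the vector $V(\xi)=\int k(d(\xi,\eta))\eta\,d\sigma(\eta)$. We show that $V(\xi)=\lambda_1(k)\xi$, with $\lambda_1(k)$ independent of $\xi$.

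The first step is a symmetry argument. For any $Q\in O(n)$ fixing $\xi$, the invariance of $\sigma$ and of $d$ gives $QV(\xi)=V(\xi)$. Since $n\ge2$, the orthogonal transformations fixing $\xi$ include the reflections in hyperplanes containing $\xi$. Their common fixed vectors are exactly the multiples of $\xi$, so $V(\xi)=\mu(\xi)\xi$. Moreover, for a general $Q\in O(n)$ the same change of variables gives $V(Q\xi)=QV(\xi)$, hence $\mu(Q\xi)=\mu(\xi)$. By transitivity of $O(n)$ on the sphere, $\mu$ is a constant $\lambda_1(k)$. It follows that $T_kY_a=\lambda_1(k)Y_a$ for every $a$. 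Uniqueness of the scalar is immediate, since $\mathcal H_1\neq\{0\}$.

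The second step evaluates $\lambda_1(k)=\langle V(\xi),\xi\rangle=\int k(\arccos\langle\xi,\eta\rangle)\langle\xi,\eta\rangle\,d\sigma(\eta)$. We write $\eta=\cos t\,\xi+\sin t\,\zeta$, where $t\in[0,\pi]$ and $\zeta$ ranges over the unit sphere of $\xi^\perp$. The standard formula for normalized measure is then $d\sigma=c_n\sin^{n-2}t\,dt\,d\sigma_{n-2}(\zeta)$ with $c_n=|\mathbb S^{n-2}|/|\mathbb S^{n-1}|$. The integrand depends only on $t$, so integrating out $\zeta$ yields \eqref{eq:intro-lambda1}. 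The case $n=2$ needs a little care: there $\mathbb S^0$ consists of two points with counting measure, $|\mathbb S^0|=2$, and the formula reads $\lambda_1=\pi^{-1}\int_0^\pi k(t)\cos t\,dt$, which is checked directly from $\eta=e^{i(\theta+t)}$.

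The main point requiring care is the polar-coordinate slice formula and its normalizing constant. I would justify it by the coarea formula applied to $\eta\mapsto\langle\xi,\eta\rangle$, or equivalently by noting that the pushforward of $\sigma$ under $\eta\mapsto\langle\xi,\eta\rangle=s$ has density $c_n(1-s^2)^{(n-3)/2}$ on $[-1,1]$. The constant $c_n$ is fixed by requiring total mass one, since $\int_0^\pi\sin^{n-2}t\,dt=|\mathbb S^{n-1}|/|\mathbb S^{n-2}|$. Continuity of $k$ guarantees that all integrals converge and that $V$ is well defined.
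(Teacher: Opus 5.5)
Your proposal is correct and follows essentially the same route as the paper. You show that $V(\xi)=\int_{\mathbb S^{n-1}}k(d(\xi,\eta))\,\eta\,d\sigma(\eta)$ is a multiple of $\xi$ by invariance under orthogonal maps fixing $\xi$, conclude the multiple is constant by $O(n)$-covariance, and then evaluate $\lambda_1(k)=\langle V(\xi),\xi\rangle$ in spherical coordinates. Your details that the paper leaves implicit are sound: the explicit use of hyperplane reflections (which also covers $n=2$), the uniqueness remark, and the check of the constant $c_n$ via the slice measure.
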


\begin{proof}
Set
\[
 V_k(\xi)
 =
 \int_{\mathbb S^{n-1}}
 k(d(\xi,\eta))\,\eta\,d\sigma(\eta).
\]
Rotational invariance of the kernel and of $\sigma$ gives
\[
 V_k(Q\xi)=QV_k(\xi),
 \qquad Q\in O(n).
\]
For fixed $\xi$, every orthogonal transformation fixing $\xi$ also fixes
$V_k(\xi)$.  Hence $V_k(\xi)$ is a scalar multiple of $\xi$.
Rotational covariance shows that this scalar is independent of $\xi$;
thus
\[
 V_k(\xi)=\lambda_1(k)\xi.
\]
Taking $\xi=e_n$ and the scalar product with $e_n$ gives
\[
 \lambda_1(k)
 =
 \int_{\mathbb S^{n-1}}
 k(d(e_n,\eta))\eta_n\,d\sigma(\eta),
\]
which in spherical coordinates is exactly
\eqref{eq:intro-lambda1}.

Finally, for
\[
 Y_a(\eta)=\langle a,\eta\rangle\in\mathcal H_1(\mathbb S^{n-1}),
\]
we have
\[
 \begin{aligned}
 T_kY_a(\xi)
 &=
 \int_{\mathbb S^{n-1}}
 k(d(\xi,\eta))\langle a,\eta\rangle\,d\sigma(\eta)\\
 &=
 \langle a,V_k(\xi)\rangle\\
 &=
 \lambda_1(k)\langle a,\xi\rangle
 =
 \lambda_1(k)Y_a(\xi).
 \end{aligned}
\]
\end{proof}

See, for example, \cite{DaiXu} for background on spherical harmonics
and zonal operators.  In particular, for $n=2$ one has $c_2=1/\pi$ and
$\sin^{n-2}t\equiv1$, whereas for $n\ge3$ the factor
$\sin^{n-2}t$ is the spherical-coordinate weight.

The basic operator estimate is as follows.

\begin{theorem}[Sharp monotone zonal contraction]\label{thm:zonal-main}
Let $n\ge2$, let $k:[0,\pi]\to\mathbb R$ be continuous and nonincreasing,
and let
\[
 F:\mathbb S^{n-1}\to\overline{\mathbb B^n}
\]
be measurable.  Then
\begin{equation}\label{eq:zonal-main}
 w\!\left(
 \operatorname{co}T_kF(\mathbb S^{n-1})
 \right)
 \le2\lambda_1(k).
\end{equation}
The constant is sharp.
\end{theorem}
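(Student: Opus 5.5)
The plan is to reduce the mean width of the convex hull to an average of support functions and then to a rearrangement-type inequality on the sphere. Let $K=\co T_kF(\Sph^{n-1})$. For $u\in\Sph^{n-1}$ we have
\[
 h_K(u)=\sup_{\xi\in\Sph^{n-1}}\langle T_kF(\xi),u\rangle .
\]
If $K$ is not closed we replace it by its closure; the mean width does not change. By \eqref{eq:introMeanWidth},
\[
 w(K)=2\int_{\Sph^{n-1}} h_K(u)\,d\sigma(u).
\]
It therefore suffices to show
\[
 \int_{\Sph^{n-1}}\sup_{\xi}\langle T_kF(\xi),u\rangle\,d\sigma(u)\le\lambda_1(k).
\]
Fix a measurable selection $\xi(u)$ that nearly attains the supremum. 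The map $\xi\mapsto T_kF(\xi)$ is continuous because $k$ is continuous, so an $\varepsilon$-approximate measurable selection exists. Then
\[
 \int h_K\,d\sigma\approx\int\!\!\int k\bigl(d(\xi(u),\eta)\bigr)\,\langle F(\eta),u\rangle\,d\sigma(\eta)\,d\sigma(u).
\]

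The key step is a pointwise bound in $\eta$ after exchanging the order of integration. Since $|F(\eta)|\le1$, we have $\langle F(\eta),u\rangle\le$ the positive and negative parts suitably arranged. A cleaner route is to bound by a bilinear quantity. Write $F(\eta)=|F(\eta)|\,v(\eta)$ and consider, for fixed $\eta$, the function
\[
 \Phi_\eta(u)=k\bigl(d(\xi(u),\eta)\bigr),
\]
paired with $g(u)=\langle v(\eta),u\rangle$. I would first subtract a constant from $k$. Since $\int\langle F(\eta),u\rangle\,d\sigma(u)=0$, replacing $k$ by $k-k(\pi)\ge0$ changes neither side; note that $\lambda_1$ also kills constants. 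We may therefore assume $k\ge0$, and then the factor $|F(\eta)|\le1$ costs nothing provided the inner integral is nonnegative.

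The core claim is then the following. For any measurable map $u\mapsto\xi(u)$ and any unit $v$,
\[
 \int k\bigl(d(\xi(u),\eta)\bigr)\langle v,u\rangle\,d\sigma(u)\ \text{summed appropriately}\ \le\ \lambda_1(k).
\]
This does not hold pointwise in $\eta$ for arbitrary $\xi(\cdot)$. I would instead exploit that $\xi(u)$ is a maximizer, which gives monotonicity: $\langle T_kF(\xi(u))-T_kF(\xi(u')),u-u'\rangle\ge0$.

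A more robust route, and the one I would actually try first, is the layer-cake decomposition of $k$. Since $k$ is nonincreasing and nonnegative,
\[
 k(t)=\int_0^\infty \mathbf 1_{\{t<\rho(s)\}}\,ds ,
\]
so $T_k$ is a positive superposition of cap-averaging operators $A_\rho F(\xi)=\int_{C(\xi,\rho)}F\,d\sigma$, where $C(\xi,\rho)$ is the geodesic cap. The support function is sublinear in the operator, since $\sup$ of a sum is at most the sum of the sups. Hence
\[
 h_K(u)\le\int_0^\infty\sup_\xi\Bigl\langle\int_{C(\xi,\rho(s))}F\,d\sigma,\ u\Bigr\rangle\,ds .
\]
For a single cap, $\langle\int_C F,u\rangle\le\int_C\langle F,u\rangle_+\le\int_{C}\dots$, and by the bathtub principle the supremum over caps of measure $m$ is at most the integral over the cap $C(u,\rho)$ centred at $u$ of the extremal configuration. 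Concretely,
\[
 \sup_\xi\Bigl\langle\int_{C(\xi,\rho)}F,u\Bigr\rangle\le\sup_{|E|=|C_\rho|}\int_E\langle F,u\rangle\,d\sigma .
\]
Averaging over $u$, we need the bound
\[
 \int\sup_{|E|=m}\int_E\langle F(\eta),u\rangle\,d\sigma(\eta)\,d\sigma(u)\le\int_{C(u,\rho)}\langle\eta,u\rangle\,d\sigma(\eta)=\lambda_1(\mathbf 1_{[0,\rho)}).
\]
This is the main obstacle. The natural tool is the rearrangement inequality on the sphere: $\int_E\langle F(\eta),u\rangle\le\int_E\langle F,u\rangle^*$, where the right side is the decreasing rearrangement. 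Summing the sup over $u$ then needs a "Riesz/Baernstein–Taylor two-point symmetrization" argument, namely that $\int_u\sup_E\int_E\langle F,u\rangle$ is maximized by $F=\id$. I would try to prove this by two-point symmetrization in $F$: polarization across hyperplanes does not decrease the functional, and $F(\eta)=\eta$ is the fixed point. The linear structure $u\mapsto\langle F,u\rangle$ should make polarization compatible.

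Sharpness is immediate. For $F=\id$, \Cref{lem:degree-one-multiplier} gives $T_kF(\xi)=\lambda_1(k)\xi$, whose convex hull is $\lambda_1(k)\overline{\B^n}$, with mean width $2\lambda_1(k)$. Here $\lambda_1(k)\ge0$ for nonincreasing $k$, by pairing $t$ with $\pi-t$ in \eqref{eq:intro-lambda1}.
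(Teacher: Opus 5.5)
Your outer reduction matches the paper's. After replacing $k$ by $k-k(\pi)$, the layer-cake decomposition into cap averages and the sublinearity of support functions reduce the theorem to the averaged inequality
\[
 \int_{\mathbb S^{n-1}} M_u(m)\,d\sigma(u)\le\Phi_n(m),
 \qquad
 M_u(m)=\sup_{\sigma(E)=m}\int_E\langle F(\eta),u\rangle\,d\sigma(\eta).
\]
Its right side equals $\lambda_1(\mathbf 1_{[0,\rho)})$ when $m=A_n(\rho)$, and linearity of $k\mapsto\lambda_1(k)$ then reassembles $\lambda_1(k)$. The problem is that this averaged inequality is the entire content of the theorem (it is the paper's trimmed-body Theorem~\ref{thm:trimmed}), and you leave it open. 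The two-point symmetrization you suggest is not yet an argument. There is no natural polarization of a vector-valued $F$; it would have to act compatibly on the source sphere and on the target directions $u$. You give no reason the sup-type functional increases under it. You also have no compactness step identifying $F(\eta)=\eta$ as the limit of iterated polarizations. Moreover, the inequality is false for individual $u$: for $F\equiv u_0$ one has $M_{u_0}(m)=m>\Phi_n(m)$. So any proof must genuinely exploit the average over $u$.

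The missing step is close to an idea you wrote down and then discarded (fix $\eta$ and pair with $\langle v(\eta),u\rangle$). First choose jointly measurable bathtub maximizers $a_u(\eta)\in[0,1]$ of mass $m$. To do this, threshold $\langle F(\cdot),u\rangle$ at a level $\tau_u$ with $\sigma\{\langle F,u\rangle>\tau_u\}\le m\le\sigma\{\langle F,u\rangle\ge\tau_u\}$, with a fractional weight on the level set. Exchange the integrals by Fubini, and for fixed $\eta$ write $F(\eta)=|F(\eta)|v(\eta)$. Now apply the bathtub principle in the variable $u$. With $q(\eta)=\int a_u(\eta)\,d\sigma(u)$, the cap centred at $v(\eta)$ is optimal for $u\mapsto\langle u,v(\eta)\rangle$, so, using $|F(\eta)|\le1$,
\[
 \int_{\mathbb S^{n-1}} a_u(\eta)\langle F(\eta),u\rangle\,d\sigma(u)\le\Phi_n(q(\eta)).
\]
A bound by $\Phi_n(m)$ pointwise in $\eta$ would indeed fail, but it is not needed. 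Fubini gives $\int q\,d\sigma=m$, and $\Phi_n$ is concave because $\Phi_n'(s)=\cos\alpha_n(s)$ is decreasing, so Jensen yields $\int\Phi_n(q)\,d\sigma\le\Phi_n(m)$. With this step inserted, your outline becomes the paper's proof. Your sharpness argument is correct as written.
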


The sharp bound therefore depends only on the degree-one multiplier.  Radial monotonicity is
essential: later we construct a continuous nonnegative rotationally
invariant Markov kernel with $\lambda_1=0$ for which the left-hand side of
\eqref{eq:zonal-main} is strictly positive.

The equality case is rigid.

\begin{theorem}[Rigidity for strictly decreasing zonal kernels]
\label{thm:zonal-rigidity}
Under the hypotheses of Theorem~\ref{thm:zonal-main}, assume in addition
that $k$ is strictly decreasing.  Then
\[
 w\!\left(
 \operatorname{co}T_kF(\mathbb S^{n-1})
 \right)
 =
 2\lambda_1(k)
\]
if and only if
\[
 F(\eta)=Q\eta
 \qquad\text{for a.e. }\eta\in\mathbb S^{n-1}
\]
for some $Q\in O(n)$.
\end{theorem}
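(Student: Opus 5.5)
The plan is to prove the two directions separately. The ``if'' direction is direct. For the ``only if'' direction, I will rerun the proof of Theorem~\ref{thm:zonal-main} in a layer-cake form and read off the equality cases.

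\emph{The ``if'' direction.} If $F(\eta)=Q\eta$ a.e., then by Lemma~\ref{lem:degree-one-multiplier}, applied coordinatewise, $T_kF(\xi)=\lambda_1(k)Q\xi$. So $\operatorname{co}T_kF(\mathbb S^{n-1})=\lambda_1(k)\overline{\mathbb B^n}$, which has mean width $2\lambda_1(k)$. This needs $\lambda_1(k)\ge0$. That follows from \eqref{eq:intro-lambda1} by pairing $t$ with $\pi-t$, which gives $\lambda_1(k)=c_n\int_0^{\pi/2}(k(t)-k(\pi-t))\cos t\sin^{n-2}t\,dt$. When $k$ is strictly decreasing this is in fact $>0$.

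\emph{Layer-cake decomposition.} Write $k(t)=k(\pi)+\int_{(t,\pi]}d\mu(s)$, where $\mu=-dk$ is a positive measure. Strict decrease of $k$ means $\mu$ charges every open subinterval of $(0,\pi)$. The constant term $k(\pi)$ only translates the image by $k(\pi)\int F\,d\sigma$, so it does not affect the mean width. For the remaining part,
\[
\langle T_kF(\xi),u\rangle-k(\pi)\Bigl\langle\int F\,d\sigma,u\Bigr\rangle=\int\Bigl(\int_{C(\xi,s)}\langle F(\eta),u\rangle\,d\sigma(\eta)\Bigr)d\mu(s),
\]
where $C(\xi,s)$ is the geodesic cap of radius $s$ centred at $\xi$. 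Let $m(s)=\sigma(C(\xi,s))$. Taking the supremum over $\xi$ and bounding each cap integral by $\sup_{\sigma(A)=m(s)}\int_A\langle F,u\rangle\,d\sigma$ reduces the estimate to a cap-level inequality, holding for each $s$:
\[
\int_{\mathbb S^{n-1}}\sup_{\sigma(A)=m(s)}\int_A\langle F,u\rangle\,d\sigma\,d\sigma(u)\le\int_{\mathbb S^{n-1}}\int_{C(u,s)}\langle\eta,u\rangle\,d\sigma(\eta)\,d\sigma(u).
\]
This is the inequality I expect underlies Theorem~\ref{thm:zonal-main}. In equality, every inequality in this chain must be an equality for $\mu$-a.e. $s$, and hence, by continuity in $s$, for all $s\in(0,\pi)$.

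\emph{Extracting rigidity.} For a.e.\ $u$, equality in the cap-level inequality for all $s$ says two things about $\varphi_u=\langle F(\cdot),u\rangle$. First, its decreasing rearrangement coincides with that of $\langle\cdot,u\rangle$. Second, each optimal superlevel set $\{\varphi_u>t\}$ is (up to null sets) a cap $C(\xi(u),s)$ with a common centre $\xi(u)$; the common centre comes from equality in the step ``$\sup_\xi$ versus $\sup_A$''. Together these give
\[
\langle F(\eta),u\rangle=\langle\eta,\xi(u)\rangle\quad\text{for a.e. }\eta,
\]
where $\xi(u)\in\mathbb S^{n-1}$ is the maximiser. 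The left side is linear in $u$, so $u\mapsto\xi(u)$ agrees a.e. with a linear map $B$ sending the sphere into itself. Hence $B=Q^{T}$ with $Q\in O(n)$, and $F(\eta)=Q\eta$ a.e.

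A consistency check along the way: equality of distributions gives $\int\langle F,u\rangle^2\,d\sigma=1/n$ for all $u$. Then $\int|F|^2\,d\sigma=1$, which together with $|F|\le1$ forces $|F|=1$ a.e.

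\emph{Main obstacle.} The delicate step is the passage from equality of integrals to the pointwise statement that the superlevel sets are concentric caps with centre $\xi(u)$ depending measurably on $u$. I would first treat this through the bathtub principle: equality in $\sup_{\sigma(A)=m}\int_A\varphi_u$ forces $A$ to be a superlevel set of $\varphi_u$. Equality in the cap step forces that superlevel set to be the cap $C(\xi,s)$ at a maximising $\xi$. Measurability of $\xi(u)$ can then be obtained from a measurable selection, using continuity of $T_kF$.
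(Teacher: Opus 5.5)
Your architecture matches the paper's. You use the layer cake, you upgrade equality from $\nu_k$-a.e.\ $t$ to every $t$ by continuity in $s$ and full support, you take a maximizing centre $\xi_u$ in the pointwise inequality $h_{K_k}(u)\le B_k(u)$, and you close with a linearity argument. Measurable selection of $\xi(u)$ is not actually an issue: once the identity holds, $\xi(u)=n\int\langle F(\eta),u\rangle\,\eta\,d\sigma(\eta)$ is linear in $u$.

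The gap is the step ``for a.e.\ $u$, equality in the cap-level inequality for all $s$ says \dots{} its decreasing rearrangement coincides with that of $\langle\cdot,u\rangle$.'' The cap-level inequality $J(s)=\int M_u(s)\,d\sigma(u)\le\Phi_n(s)$ holds only after averaging over $u$. There is no pointwise bound $M_u(s)\le\Phi_n(s)$: for $F\equiv u_0$ one has $M_{u_0}(s)=s>\Phi_n(s)$ on $(0,1)$. So equality of the averages, by itself, gives no information about an individual direction. Your second conclusion inherits the gap. A cap $C(\xi_u,t)$ that is a bathtub optimizer must coincide with $\{\varphi_u>\cos t\}$ only if the level sets of $\varphi_u$ are null, i.e.\ only once you already know that the law of $\varphi_u$ is atomless. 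From the pointwise step alone you learn only that $\varphi_u$ is a.e.\ a nondecreasing function $\psi_u$ of $\langle\eta,\xi_u\rangle$.

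What is missing is an equality analysis inside the proof of Theorem~\ref{thm:trimmed}, which is where the per-direction information sits. The paper fixes $s$, uses the jointly measurable selectors $a_{u,s}$ of Lemma~\ref{lem:quantile-selector}, and swaps the integrals. For fixed $\eta$, the $u$-integral is a bathtub problem for $u\mapsto\langle F(\eta),u\rangle$, bounded by $|F(\eta)|\Phi_n(q_s(\eta))$ with $q_s(\eta)=\int a_{u,s}(\eta)\,d\sigma(u)$; Jensen is then applied to the strictly concave $\Phi_n$.

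Equality $J(s)=\Phi_n(s)$ forces three things: $q_s\equiv s$, $|F|=1$ a.e., and, by uniqueness of the cap maximizer in the $u$-variable, $a_{u,s}(\eta)=\mathbf 1_{\{\langle F(\eta),u\rangle>c_s\}}$ for a.e.\ $(u,\eta)$, where $c_s=\cos\alpha_n(s)$. Each selector has $\eta$-mass $s$, so integrating in $\eta$ gives $\sigma\{\eta:\langle F(\eta),u\rangle>c_s\}=s$ for a.e.\ $u$ and a countable dense set of $s$. This is exactly the law identity you assumed.

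Alternatively, your consistency check can be run in reverse. With $\varphi_u=\psi_u(\langle\cdot,\xi_u\rangle)$ from the pointwise step, $J\equiv\Phi_n$ gives $\int\psi_u(c)\,d\sigma(u)=c$ for a.e.\ $c$. Then $\frac1n\ge\frac1n\int|F|^2\,d\sigma=\iint\psi_u(c)^2\,d\nu(c)\,d\sigma(u)\ge\int c^2\,d\nu(c)=\frac1n$, with $\nu$ the law of $\eta_n$, which forces $\psi_u(c)=c$. Either way, some such argument must be supplied; it is the heart of the rigidity proof, not the measurability of $\xi(u)$.
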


\subsection{Harmonic consequences}

For $0<r<1$, the ordinary Poisson kernel of $\mathbb B^n$ has the
zonal form
\[
 k(t)=p_r(t)
 =
 \frac{1-r^2}{(1-2r\cos t+r^2)^{n/2}},
 \qquad 0\le t\le\pi.
\]
By \eqref{eq:intro-lambda1}, its degree-one multiplier is
\begin{equation}\label{eq:intro-euclidean-first-multiplier}
 \lambda_1(p_r)
 =
 c_n\int_0^\pi
 \frac{(1-r^2)\cos t\,\sin^{n-2}t}
 {(1-2r\cos t+r^2)^{n/2}}\,dt
 =r.
\end{equation}
The last identity is equivalently the Poisson reproduction of degree-one
harmonic functions; see \cite{AxlerBourdonRamey} for standard Poisson theory
and \cite{DaiXu} for the spherical-harmonic viewpoint.  Thus the first
multiplier is $r$ in every dimension.  Since $p_r$ is strictly decreasing,
Theorem~\ref{thm:zonal-main} gives the following contraction theorem with the
same sharp constant in every dimension.

\begin{theorem}[Sharp convex-hull mean-width contraction]
\label{thm:main}
Let $n\ge2$ and let
$f:\mathbb B^n\to\mathbb B^n$ be harmonic.  Then, for every $0<r<1$,
\begin{equation}\label{eq:main}
 w\!\left(\operatorname{co}f(r\mathbb S^{n-1})\right)\le2r.
\end{equation}
The constant is sharp.
\end{theorem}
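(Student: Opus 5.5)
The plan is to reduce Theorem~\ref{thm:main} to Theorem~\ref{thm:zonal-main} with the kernel $k=p_r$. First, since $f$ maps $\mathbb B^n$ into $\mathbb B^n$, each coordinate is a bounded harmonic function. Bounded harmonic functions on the ball are Poisson integrals of their a.e.\ radial boundary values (see \cite{AxlerBourdonRamey}). So there is a measurable $F:\mathbb S^{n-1}\to\overline{\mathbb B^n}$, namely $F(\eta)=\lim_{\rho\to1}f(\rho\eta)$ a.e., such that
\[
 f(r\xi)=\int_{\mathbb S^{n-1}}p_r\bigl(d(\xi,\eta)\bigr)F(\eta)\,d\sigma(\eta)=T_{p_r}F(\xi),
 \qquad \xi\in\mathbb S^{n-1}.
\]
Here I use the standard identity $P(r\xi,\eta)=(1-r^2)/|r\xi-\eta|^n$ and $|r\xi-\eta|^2=1-2r\cos d(\xi,\eta)+r^2$. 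The values of $F$ lie in the closed ball because they are limits of points of $\mathbb B^n$. To avoid boundary-value theory altogether, one could instead apply the argument to the dilation $f_\rho(x)=f(\rho x)$, which is continuous on $\overline{\mathbb B^n}$, and then let $\rho\to1$. This would require checking that mean width is continuous under Hausdorff convergence of the compact sets $\co f(r\rho'\mathbb S^{n-1})$. The Poisson route is cleaner, so I would use it. With this representation, $f(r\mathbb S^{n-1})=T_{p_r}F(\mathbb S^{n-1})$.

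Second, $p_r$ is continuous and nonincreasing on $[0,\pi]$: the denominator $(1-2r\cos t+r^2)^{n/2}$ is increasing in $t$, since $\cos t$ decreases. Theorem~\ref{thm:zonal-main} then gives $w(\co f(r\mathbb S^{n-1}))\le2\lambda_1(p_r)$. The remaining input is \eqref{eq:intro-euclidean-first-multiplier}, $\lambda_1(p_r)=r$. I would verify this without computing the integral. The coordinate function $x_j$ is harmonic, so Poisson reproduction gives $T_{p_r}Y_{e_j}(\xi)=r\xi_j$. Comparing with Lemma~\ref{lem:degree-one-multiplier} gives $\lambda_1(p_r)=r$, which proves \eqref{eq:main}.

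For sharpness, take $f=\id$ restricted to $\mathbb B^n$, or any orthogonal map $x\mapsto Qx$. Then $\co f(r\mathbb S^{n-1})=r\overline{\mathbb B^n}$, whose mean width is $2r$ by the normalization \eqref{eq:introMeanWidth}. So equality is attained for every $r$.

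The only point needing care is the representation step, which identifies the sphere image with $T_{p_r}$ of bounded measurable boundary data. It is standard, but it is where the hypothesis that $f$ maps into $\mathbb B^n$ enters. If one wanted a self-contained argument, the dilation approach is the fallback. Because the mean width is sandwiched, the limiting argument needs only the upper bound for each $\rho<1$, together with continuity of $f$ on $r\overline{\mathbb B^n}$.
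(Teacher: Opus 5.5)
Your proof is correct and is essentially the paper's argument: you represent $f(r\xi)$ as a Poisson integral, apply Theorem~\ref{thm:zonal-main} (packaged in the paper as Corollary~\ref{cor:PoissonWidth}) with $\lambda_1(p_r)=r$ obtained from Poisson reproduction, and use orthogonal maps for sharpness. The only difference is the representation step: the paper uses your ``fallback''. It writes $f(r\xi)=\int_{\mathbb S^{n-1}} P_{r/R}(\xi,\eta)f(R\eta)\,d\sigma(\eta)$ for $r<R<1$, bounds the \emph{fixed} set by $w(\co f(r\mathbb S^{n-1}))\le 2r/R$, and lets $R\uparrow1$; this avoids boundary-limit theory and needs no Hausdorff continuity of mean width (the paper reserves your boundary-value route, Lemma~\ref{lem:boundary-representation}, for the rigidity theorem).
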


In dimension two, Cauchy's perimeter formula gives
$\operatorname{Per}(K)=\pi w(K)$.  For degenerate convex sets, perimeter
is understood by continuous extension; in particular, a segment has
perimeter twice its length.  Hence:

\begin{corollary}[Planar perimeter and area contraction]
\label{cor:planar}
Let $f:\mathbb D\to\mathbb D$ be harmonic.  Then
\begin{equation}\label{eq:planar-perimeter}
 \operatorname{Per}\!\left(\operatorname{co}f(r\mathbb T)\right)
 \le2\pi r,
 \qquad 0<r<1,
\end{equation}
and
\begin{equation}\label{eq:planar-area}
 |f(r\mathbb D)|\le\pi r^2.
\end{equation}
Both constants are sharp.
\end{corollary}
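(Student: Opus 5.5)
The plan is to derive both inequalities from Theorem~\ref{thm:main} in the case $n=2$, and then to check sharpness with the identity map.

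\emph{Perimeter bound.} Set $K=\operatorname{co}f(r\mathbb T)$. This is a compact convex set, because $f(r\mathbb T)$ is compact ($f$ is continuous on the closed circle $r\mathbb T\subset\mathbb D$). Cauchy's formula $\operatorname{Per}(K)=\pi w(K)$ holds with our normalization: $w(K)=\frac1{2\pi}\int_0^{2\pi}(h_K(\theta)+h_K(\theta+\pi))\,d\theta$, and the perimeter is $\int_0^{2\pi}h_K\,d\theta$. For degenerate $K$ (a point or a segment) the formula persists by the continuous-extension convention. Combining it with \eqref{eq:main} gives $\operatorname{Per}(K)\le 2\pi r$.

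\emph{Area bound.*} We pass from the circle to the disk with the harmonic convex-hull property. For each unit vector $u$, the function $z\mapsto\langle f(z),u\rangle$ is real harmonic on a neighbourhood of $\overline{r\mathbb D}$. By the maximum principle, its supremum over $r\overline{\mathbb D}$ is attained on $r\mathbb T$, so it is at most $h_K(u)$. Hence $f(r\mathbb D)\subset K$, since $K$ is the intersection of its supporting half-planes. Next, the planar isoperimetric inequality (Urysohn's inequality in dimension two) gives $|K|\le \operatorname{Per}(K)^2/(4\pi)\le \pi r^2$. Since $f(r\mathbb D)$ is an open or at least measurable set (the continuous image of a $\sigma$-compact set) contained in $K$, we conclude $|f(r\mathbb D)|\le|K|\le\pi r^2$. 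No injectivity is needed, because we compare the image set with a convex hull and never integrate a Jacobian.

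\emph{Sharpness.} Take $f=\mathrm{id}$. Then $\operatorname{co}f(r\mathbb T)=r\overline{\mathbb D}$, whose perimeter is $2\pi r$, and $|f(r\mathbb D)|=\pi r^2$. So both constants are attained. More generally, any $f(z)=Qz$ with $Q\in O(2)$ gives equality.

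The only step needing care is the degenerate case of the perimeter formula together with the inclusion $f(r\mathbb D)\subset K$; both are routine. Theorem~\ref{thm:main} already carries the real content.
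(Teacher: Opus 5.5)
Your proposal is correct and follows the paper's proof: Cauchy's formula $\operatorname{Per}(K)=\pi w(K)$ combined with Theorem~\ref{thm:main} for $n=2$ gives the perimeter bound, the convex-hull inclusion plus the planar isoperimetric (Urysohn) inequality gives the area bound, and orthogonal maps give sharpness. The only cosmetic difference is that you prove $f(r\mathbb D)\subset\operatorname{co}f(r\mathbb T)$ with the maximum principle instead of the Poisson averaging of Lemma~\ref{lem:convexhull}; both arguments work.
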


More generally, if
\[
 K_r=\operatorname{co}f(r\mathbb S^{n-1}),
\]
then all intrinsic volumes contract.

\begin{corollary}[Intrinsic-volume contraction]
\label{cor:intrinsic}
For every $n\ge2$, every harmonic
$f:\mathbb B^n\to\mathbb B^n$, and every $j=1,\ldots,n$,
\begin{equation}\label{eq:intro-intrinsic}
 V_j(K_r)
 \le
 V_j(r\overline{\mathbb B^n})
 =
 r^jV_j(\overline{\mathbb B^n}).
\end{equation}
Each inequality is sharp.
\end{corollary}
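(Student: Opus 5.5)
The plan is to derive Corollary~\ref{cor:intrinsic} from Theorem~\ref{thm:main} using the standard monotonicity of intrinsic volumes under mean width, namely the generalized Urysohn (Alexandrov--Fenchel) inequalities. Fix $0<r<1$ and set $K_r=\operatorname{co}f(r\mathbb S^{n-1})$, which is compact and convex because $f$ is continuous on $r\mathbb S^{n-1}$. The case $j=1$ is immediate. Indeed, $V_1$ is a fixed positive multiple of mean width, $V_1(K)=\kappa_n w(K)$ with $\kappa_n$ depending only on $n$. By \eqref{eq:main}, $w(K_r)\le 2r=w(r\overline{\mathbb B^n})$, hence $V_1(K_r)\le V_1(r\overline{\mathbb B^n})$.

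For $2\le j\le n$ I will invoke the chain of Alexandrov--Fenchel consequences for quermassintegrals. If $W_i$ denotes the $i$-th quermassintegral, then $(W_i(K)/W_i(B))^{1/(n-i)}$ is nondecreasing in $i$ for every compact convex $K$, with $B=\overline{\mathbb B^n}$. Rewritten in terms of intrinsic volumes, this says that
\[
 \left(\frac{V_j(K)}{V_j(B)}\right)^{1/j}
\]
is nonincreasing in $j$. In particular,
\[
 \left(\frac{V_j(K)}{V_j(B)}\right)^{1/j}\le\frac{V_1(K)}{V_1(B)}=\frac{w(K)}{2}.
\]
The case $j=n$ of this inequality is exactly Urysohn's inequality. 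Applying it to $K=K_r$ and using $w(K_r)\le 2r$ gives $V_j(K_r)\le r^jV_j(B)=V_j(rB)$, where the equality uses homogeneity of degree $j$. Degenerate $K_r$, for example lower-dimensional hulls, cause no trouble: the inequalities hold for all convex bodies by approximation and continuity of the $V_j$. The only real content of this step is citing the right form of the Alexandrov--Fenchel consequence, say from Schneider's book, and checking the normalization that links $V_1$ to $w$.

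Sharpness comes from the identity map, or more generally from $f(x)=Qx$ with $Q\in O(n)$. In that case $K_r=r\overline{\mathbb B^n}$ and every inequality is an equality. The main obstacle is therefore not analytic: it is the bookkeeping between the mean-width normalization \eqref{eq:introMeanWidth} and intrinsic volumes, together with the correct monotone chain. Everything substantive has already been done in Theorem~\ref{thm:main}.
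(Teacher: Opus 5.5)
Your proposal is correct and follows essentially the same route as the paper. The paper also combines Theorem~\ref{thm:main} with the extended Urysohn inequality \eqref{eq:extended-Urysohn} (monotonicity in $j$ of the normalized intrinsic volumes, from Alexandrov--Fenchel) and the normalization \eqref{eq:V1-meanwidth}, handles lower-dimensional hulls by approximation, and obtains sharpness from $f(x)=Qx$ with $Q\in O(n)$.
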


Taking $j=n$ and using the harmonic convex-hull property gives

\begin{corollary}[Sharp volume contraction]
\label{cor:volume}
For every $n\ge2$ and every harmonic
$f:\mathbb B^n\to\mathbb B^n$,
\begin{equation}\label{eq:volume}
 |f(r\mathbb B^n)|
 \le
 \omega_n r^n
 =
 |r\mathbb B^n|,
 \qquad 0<r<1.
\end{equation}
The constant is sharp.
\end{corollary}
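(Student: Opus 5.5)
The plan is to derive Corollary~\ref{cor:volume} from Corollary~\ref{cor:intrinsic} with $j=n$, via the convex-hull property of harmonic maps. Fix $0<r<1$ and set $K_r=\operatorname{co}f(r\mathbb S^{n-1})$.

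\emph{Step 1: $f(r\mathbb B^n)\subset K_r$.} Each component of $f$ is harmonic on a neighbourhood of $r\overline{\mathbb B^n}$. For any unit vector $u$, the function $\langle f,u\rangle$ is therefore harmonic there. By the maximum principle,
\[
 \langle f(x),u\rangle\le\max_{|y|=r}\langle f(y),u\rangle=h_{K_r}(u),\qquad |x|\le r .
\]
Since $K_r$ is compact and convex (being the hull of the compact set $f(r\mathbb S^{n-1})$), it is the intersection of its supporting half-spaces. Hence $f(x)\in K_r$. This argument uses no injectivity, so multiplicity never enters, which is the whole point of passing through the convex hull.

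\emph{Step 2: volume bound.} From Step 1 and monotonicity of Lebesgue measure,
\[
 |f(r\mathbb B^n)|\le |K_r|=V_n(K_r).
\]
Corollary~\ref{cor:intrinsic} with $j=n$ then gives $V_n(K_r)\le r^nV_n(\overline{\mathbb B^n})=\omega_n r^n$. The corollary itself rests on Theorem~\ref{thm:main} together with Urysohn's inequality $V_n(K)\le\omega_n(w(K)/2)^n$; with this normalization of mean width the ball is extremal.

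\emph{Step 3: measurability.} The set $f(r\mathbb B^n)$ is the continuous image of a $\sigma$-compact set, so it is $\sigma$-compact and hence Borel. The measure in question is therefore well defined.

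\emph{Step 4: sharpness.} Take $f=\id$ (or $f(x)=Qx$ with $Q\in O(n)$), which is harmonic and maps $\mathbb B^n$ into itself. Then $f(r\mathbb B^n)=r\mathbb B^n$, whose volume is $\omega_n r^n$, so equality holds.

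The only substantive input is Corollary~\ref{cor:intrinsic}, which we may assume. The one step needing care is Step 1: the maximum principle must be applied to linear functionals of $f$, not to $|f|$ or to individual points of the image.
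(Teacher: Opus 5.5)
Your proposal is correct and follows the paper's proof essentially verbatim: the convex-hull inclusion, $\sigma$-compactness for measurability, then Urysohn's inequality combined with Theorem~\ref{thm:main} (which is exactly the $j=n$ case of Corollary~\ref{cor:intrinsic}), with orthogonal maps giving sharpness. The only cosmetic difference is how $f(r\mathbb B^n)\subset K_r$ is obtained: the paper writes $f(x)$ as a Poisson (convex) average over $f(r\mathbb S^{n-1})$, whereas you apply the maximum principle to linear functionals, which is how the paper argues in the hyperbolic Lemma~\ref{hyp:lem:convex-hull}.
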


Corollaries~\ref{cor:planar} and \ref{cor:volume} answer both parts of
Koh--Kovalev's Question~1.  The planar conclusion also resolves
Problem~3.25 in \cite{BshoutyLyzzaik}.  The intrinsic-volume inequalities
show that the volume comparison follows from a single mean-width bound.

Equality at a single radius determines the harmonic map throughout the ball.

\begin{theorem}[One-radius rigidity for harmonic self-maps]
\label{thm:harmonic-rigidity}
Let $f:\mathbb B^n\to\mathbb B^n$ be harmonic.  If for some
$r_0\in(0,1)$,
\[
 w\!\left(
 \operatorname{co}f(r_0\mathbb S^{n-1})
 \right)
 =
 2r_0,
\]
then there exists $Q\in O(n)$ such that
\[
 f(x)=Qx,\qquad x\in\mathbb B^n.
\]
Conversely, every orthogonal map has equality for every radius.
\end{theorem}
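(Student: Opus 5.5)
The plan is to reduce the one-radius rigidity to Theorem~\ref{thm:zonal-rigidity}, applied to the Poisson kernel $p_{r_0}$. That kernel is strictly decreasing on $[0,\pi]$, and by \eqref{eq:intro-euclidean-first-multiplier} its degree-one multiplier is $\lambda_1(p_{r_0})=r_0$. To apply the theorem I first need to write $f$ on the sphere of radius $r_0$ as $T_{p_{r_0}}F$ for some measurable $F:\mathbb S^{n-1}\to\overline{\mathbb B^n}$.

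Since $f$ is bounded and harmonic coordinatewise on $\mathbb B^n$, each coordinate is the Poisson integral of its radial boundary values, which exist $\sigma$-almost everywhere by Fatou's theorem. Call the vector of these boundary values $F$. Because $|f|<1$ in $\mathbb B^n$, we have $|F|\le1$ almost everywhere, and after modification on a null set $F$ takes values in $\overline{\mathbb B^n}$. The Poisson representation then reads
\[
 f(r_0\xi)=\int_{\mathbb S^{n-1}} p_{r_0}(d(\xi,\eta))F(\eta)\,d\sigma(\eta)=T_{p_{r_0}}F(\xi),
\]
so that $\operatorname{co} f(r_0\mathbb S^{n-1})=\operatorname{co}T_{p_{r_0}}F(\mathbb S^{n-1})$.

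By hypothesis this convex hull has mean width $2r_0=2\lambda_1(p_{r_0})$. Theorem~\ref{thm:zonal-rigidity} therefore yields $Q\in O(n)$ with $F(\eta)=Q\eta$ for almost every $\eta$. The map $x\mapsto Qx$ is harmonic, bounded, and has boundary values $Q\eta$. Since a bounded harmonic function is determined by the Poisson integral of its a.e. boundary values, $f(x)=Qx$ throughout $\mathbb B^n$.

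For the converse, if $f=Q$ then $f(r\mathbb S^{n-1})=r\mathbb S^{n-1}$. Its convex hull is $r\overline{\mathbb B^n}$, whose mean width is $2r$ for every $r$.

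The only step that needs care is the boundary representation: we must have a bona fide measurable $F$ with values in the closed ball, so that the hypotheses of Theorem~\ref{thm:zonal-rigidity} hold exactly. This is standard, via weak-$*$ compactness of $\{f(\rho\,\cdot)\}_{\rho<1}$ in $L^\infty$ together with Fatou's theorem. The real difficulty is already absorbed into Theorem~\ref{thm:zonal-rigidity} and is assumed here.
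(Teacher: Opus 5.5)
Your proposal is correct and follows essentially the same route as the paper. Both represent $f$ as the Poisson integral of a boundary datum $F$ with $|F|\le1$ a.e., and treat the hypothesis as the equality case of Theorem~\ref{thm:zonal-rigidity} for the Poisson kernel with $\lambda_1(p_{r_0})=r_0$ (the paper's Corollary~\ref{cor:PoissonWidth}); this gives $F(\eta)=Q\eta$ a.e., and Poisson reproduction then yields $f(x)=Qx$ on all of $\mathbb B^n$.
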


\subsection{Hyperbolic-harmonic consequences}\label{sec:intro-hyperbolic}

The same zonal principle applies to the Poisson--Szeg\H{o} kernel of the real hyperbolic ball.  For $n\ge2$, set
\begin{equation}\label{eq:hyp-laplacian-intro}
 \Delta_h=(1-|x|^2)^2\Delta+2(n-2)(1-|x|^2)x\cdot\nabla,
\end{equation}
and call a $C^2$ map $u=(u_1,\ldots,u_n):\mathbb B^n\to\mathbb R^n$
\emph{hyperbolic harmonic} if $\Delta_hu_j=0$ for every component.
This is a linear, componentwise equation on the hyperbolic source; all
target norms, convex hulls, and volumes in this paper are Euclidean.
It is distinct from the nonlinear harmonic-map equation between two
hyperbolic manifolds.  When $n=2$, it is equivalent to ordinary harmonicity;
the new distortion profile arises for $n\ge3$.

For $0\le r<1$, write
\[k(t)=
 p_r^h(t)
 =
 \left(\frac{1-r^2}{1-2r\cos t+r^2}\right)^{n-1},
 \qquad 0\le t\le\pi,
\]
for the zonal form of the Poisson--Szeg\H{o} kernel.  Its degree-one
spherical multiplier is
\begin{equation}\label{eq:hyp-Lambda-intro}
\begin{aligned}
 \Lambda_n(r)
 :=\lambda_1(p_r^h)
 &=
 c_n\int_0^\pi
 \left(\frac{1-r^2}{1-2r\cos t+r^2}\right)^{n-1}
 \cos t\,\sin^{n-2}t\,dt\\
 &=
 \int_{\mathbb S^{n-1}}P_h(re_n,\zeta)\,\zeta_n\,d\sigma(\zeta),
\end{aligned}
\end{equation}
where
\begin{equation}\label{eq:hyp-Ph}
 P_h(x,\zeta)=\left(\frac{1-|x|^2}{|x-\zeta|^2}\right)^{n-1}.
\end{equation}
Thus $\Lambda_n(r)$ is precisely the degree-one multiplier
\eqref{eq:intro-lambda1} of the Poisson--Szeg\H{o} kernel.  We shall prove
\begin{equation}\label{eq:hyp-Lambda-explicit-intro}
 \Lambda_n(r)=\frac{2(n-1)}{n}\,r\,
 {}_2F_1\!\left(1,1-\frac n2;\frac{n+2}{2};r^2\right).
\end{equation}
The hyperbolic counterpart of Theorem~\ref{thm:main} is the following.

\begin{theorem}[Sharp hyperbolic-harmonic mean-width and volume distortion]\label{thm:hyp-main-intro}
Let $n\ge3$ and let $u:\mathbb B^n\to\mathbb B^n$ be hyperbolic harmonic. Then, for every $0<r<1$,
\[
 w\!\left(\operatorname{co}u(r\mathbb S^{n-1})\right)\le2\Lambda_n(r)
\]
and
\[
 |u(r\mathbb B^n)|\le\omega_n\Lambda_n(r)^n.
\]
Both estimates are sharp.  Equality in either estimate at one radius
occurs precisely for the Poisson--Szeg\H{o} extensions of orthogonal
boundary data.
\end{theorem}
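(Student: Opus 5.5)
We reduce Theorem~\ref{thm:hyp-main-intro} to Theorems~\ref{thm:zonal-main} and \ref{thm:zonal-rigidity} applied to the Poisson--Szeg\H{o} kernel, in the same way Theorem~\ref{thm:main} follows from the Euclidean Poisson kernel.

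\emph{Step 1: representation.} A bounded hyperbolic-harmonic $u:\mathbb B^n\to\mathbb B^n$ is the Poisson--Szeg\H{o} integral of a measurable boundary function $F:\mathbb S^{n-1}\to\overline{\mathbb B^n}$. Componentwise, $u_j=\int P_h(\cdot,\zeta)F_j(\zeta)\,d\sigma(\zeta)$ with $F_j\in L^\infty$ the weak-$*$ boundary limit. This is the standard Fatou/Poisson representation for $\Delta_h$-harmonic functions. To get the target constraint $|F|\le1$, test against each unit vector $a$: the scalar function $\langle a,u\rangle$ is bounded by $1$, so its boundary datum is bounded by $1$ a.e., and taking countably many $a$ gives $|F|\le1$ a.e.

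\emph{Step 2: zonal form.} For $x=r\xi$,
\[
|r\xi-\zeta|^2=1-2r\langle\xi,\zeta\rangle+r^2,
\]
so $u(r\xi)=T_{k}F(\xi)$ with $k=p_r^h$. We need $p_r^h$ to be continuous and strictly decreasing on $[0,\pi]$ for $0<r<1$. This holds because $1-2r\cos t+r^2$ is strictly increasing in $t$ and the exponent $n-1$ is positive. Theorem~\ref{thm:zonal-main} then gives $w(\operatorname{co}u(r\mathbb S^{n-1}))\le2\lambda_1(p_r^h)=2\Lambda_n(r)$, using \eqref{eq:hyp-Lambda-intro}.

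\emph{Step 3: volume.} We need a convex-hull property: $u(r\mathbb B^n)\subset\operatorname{co}u(r\mathbb S^{n-1})$. For a unit vector $a$, the function $\langle a,u\rangle$ is $\Delta_h$-harmonic. Since $\Delta_h$ is elliptic in the interior with no zeroth-order term, the maximum principle applies on $r\mathbb B^n$, so $\langle a,u\rangle\le h_{K_r}(a)$ there. Urysohn's inequality $|K|\le\omega_n(w(K)/2)^n$ then yields $|u(r\mathbb B^n)|\le\omega_n\Lambda_n(r)^n$. For sharpness, take $F(\zeta)=\zeta$. Then $u(r\xi)=\Lambda_n(r)\xi$ by Lemma~\ref{lem:degree-one-multiplier}. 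We must check that $u$ maps into $\mathbb B^n$, i.e.\ $0\le\Lambda_n(r)<1$. Positivity follows because the kernel is decreasing and $\cos t$ is antisymmetric about $\pi/2$. The bound $\Lambda_n(r)<1$ follows from $|u|<1$ by the strict maximum principle, since $\int P_h\,d\sigma=1$. The image of $r\mathbb B^n$ is then the ball of radius $\Lambda_n(r)$, so both bounds are attained.

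\emph{Step 4: equality.} If the mean-width bound is an equality, Theorem~\ref{thm:zonal-rigidity} gives $F=Q\zeta$ a.e., so $u$ is the Poisson--Szeg\H{o} extension of orthogonal data. If the volume bound is an equality, then $|K_r|\ge|u(r\mathbb B^n)|=\omega_n\Lambda_n(r)^n$. Combined with Urysohn and the mean-width bound, this forces equality in both, and we reduce to the previous case; the equality case of Urysohn, $K_r$ a ball, is not needed. The converse is immediate from the radial formula above.

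The main obstacle is Step 1 together with the maximum principle in Step 3. The representation of bounded $\Delta_h$-harmonic functions by Poisson--Szeg\H{o} integrals of $L^\infty$ data needs care, and here I would cite the classical hyperbolic potential theory (Stoll). The explicit hypergeometric formula \eqref{eq:hyp-Lambda-explicit-intro} is not needed for this theorem itself.
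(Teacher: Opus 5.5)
Your proposal is correct and follows essentially the same route as the paper. The chain is the same: Poisson--Szeg\H{o} representation with $|F|\le1$; Theorems~\ref{thm:zonal-main} and \ref{thm:zonal-rigidity} applied to the strictly decreasing kernel $p_r^h$; the maximum-principle convex-hull inclusion plus Urysohn for volume; the extremal $P_h[Q\,\mathrm{id}]$ for sharpness; and reduction of volume equality to mean-width equality. The one step to tighten is your claim that $u(r\mathbb B^n)$ is exactly the ball of radius $\Lambda_n(r)$. The open ball is contained in the image by the intermediate value theorem, since $\Lambda_n$ is continuous with $\Lambda_n(0)=0$. The reverse inclusion comes from your convex-hull property, since $\operatorname{co}u(r\mathbb S^{n-1})=\Lambda_n(r)\overline{\mathbb B^n}$. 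This is how the paper argues.
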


The extremal associated with $Q\in O(n)$ is
\[
 U_Q(x)=P_h[Q|_{S^{n-1}}](x)
 =\Lambda_n(|x|)Q\frac{x}{|x|},\qquad x\ne0,
\]
with $U_Q(0)=0$.  Since $\Lambda_n'(0)=2(n-1)/n>1$ for $n\ge3$, the Euclidean volume contraction $|u(r\mathbb B^n)|\le\omega_nr^n$ fails in the hyperbolic-harmonic setting.

Precomposition with hyperbolic automorphisms transfers the derivative
bound at the origin to any prescribed point.

\begin{theorem}[Sharp invariant hyperbolic differential form]\label{thm:hyp-differential-intro}
Let $n\ge2$ and let $u:\mathbb B^n\to\mathbb B^n$ be hyperbolic harmonic.
Then, for every $x\in\mathbb B^n$,
\[
 \int_{\mathbb S^{n-1}}|Du(x)^T\xi|\,d\sigma(\xi)
 \le \frac{2(n-1)}{n(1-|x|^2)},
\]
and
\[
 |\det Du(x)|\le
 \left(\frac{2(n-1)}{n(1-|x|^2)}\right)^n.
\]
Both constants are sharp at every prescribed point.
\end{theorem}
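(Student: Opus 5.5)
We first prove the bound at the origin and then move it to an arbitrary point with a hyperbolic automorphism. The statement is invariant under these automorphisms, because $\Delta_h$ commutes with them and the factor $(1-|x|^2)^{-1}$ is the conformal factor of the hyperbolic metric.

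\emph{Step 1 (origin).} Let $u$ be hyperbolic harmonic with $u(\mathbb B^n)\subset\mathbb B^n$. Bounded hyperbolic-harmonic functions are Poisson--Szeg\H{o} integrals of bounded boundary data, so $u=P_h[F]$ for some measurable $F:\mathbb S^{n-1}\to\overline{\mathbb B^n}$. Differentiating \eqref{eq:hyp-Ph} at $x=0$ gives $\nabla_xP_h(0,\zeta)=2(n-1)\zeta$. Hence
\[
 Du(0)^T\xi=2(n-1)\int_{\mathbb S^{n-1}}\langle F(\zeta),\xi\rangle\,\zeta\,d\sigma(\zeta).
\]
By duality it suffices to bound, for $v\in\mathbb S^{n-1}$,
\[
 \langle Du(0)^T\xi,v\rangle=2(n-1)\int\langle F(\zeta),\xi\rangle\langle\zeta,v\rangle\,d\sigma .
\]
This is easier if we avoid duality and integrate in $\xi$ directly. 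Put $A=\int_{\mathbb S^{n-1}}F(\zeta)\otimes\zeta\,d\sigma(\zeta)$, so that $Du(0)=2(n-1)A$. We want
\[
 \int|A^T\xi|\,d\sigma(\xi)\le\int|\langle\zeta,e_1\rangle|\,d\sigma\cdot\frac1n\Big/\int|\langle\zeta,e_1\rangle|\,d\sigma,
\]
that is, $\int|A^T\xi|\,d\sigma\le 1/n$, with equality for $A=\frac1n Q$.

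The cleanest route is Theorem~\ref{thm:zonal-main} with the linear kernel $k(t)=\varepsilon\cos t$, which is nonincreasing. Then $T_kF(\xi)=\varepsilon A\xi$, and Lemma~\ref{lem:degree-one-multiplier} gives $\lambda_1(k)=\varepsilon/n$, since $\int\zeta_n^2\,d\sigma=1/n$. The convex hull of $A(\mathbb S^{n-1})$ is the ellipsoid $A\overline{\mathbb B^n}$, whose support function is $h(\xi)=|A^T\xi|$. So Theorem~\ref{thm:zonal-main} yields
\[
 2\int|A^T\xi|\,d\sigma\le 2/n .
\]
Multiplying by $2(n-1)$ gives the mean-width bound at $0$. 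The determinant bound then follows from Urysohn's inequality applied to the ellipsoid $Du(0)\overline{\mathbb B^n}$: its volume is $|\det Du(0)|\omega_n$ and its mean width is at most $2\cdot 2(n-1)/n$, so $|\det Du(0)|\le(2(n-1)/n)^n$.

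\emph{Step 2 (transfer).} Fix $x$ and let $\varphi_x$ be the Möbius automorphism of $\mathbb B^n$ with $\varphi_x(0)=x$. Then $u\circ\varphi_x$ is again hyperbolic harmonic (invariance of $\Delta_h$) and maps into $\mathbb B^n$. Moreover $D\varphi_x(0)=(1-|x|^2)R$ with $R$ orthogonal. Step 1 applied to $u\circ\varphi_x$ gives
\[
 \int|R^TDu(x)^T\xi|\,d\sigma\,(1-|x|^2)\le 2(n-1)/n,
\]
and $R^T$ preserves norms, which is the first claim. The determinant bound is the same computation: $|\det D(u\circ\varphi_x)(0)|=(1-|x|^2)^n|\det Du(x)|$.

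\emph{Step 3 (sharpness).} At $0$, take $U_Q$. Since $\Lambda_n'(0)=2(n-1)/n$, we get $DU_Q(0)=\frac{2(n-1)}{n}Q$, which gives equality in both bounds. At a general point $x$, the map $U_Q\circ\varphi_x^{-1}$ attains equality, by the same chain rule as in Step 2.

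The main obstacle is justifying the needed invariance and representation facts, which I take as standard:
\begin{itemize}
\item $\Delta_h$ commutes with Möbius automorphisms;
\item every bounded hyperbolic-harmonic $u$ equals $P_h[F]$ for an $L^\infty$ boundary function $F$ (via $u(\rho x)$ and weak-$*$ limits);
\item $F$ takes values in $\overline{\mathbb B^n}$, which holds because the $P_h$ are approximate identities.
\end{itemize}
Once these are in place, everything else reduces to Theorem~\ref{thm:zonal-main} with a linear kernel.
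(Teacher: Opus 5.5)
Your proof is correct. Steps~2 and~3 are the paper's argument: transfer by an automorphism with $D\varphi(0)=(1-|x|^2)O$, Urysohn's inequality on the derivative ellipsoid, and the extremal $U_Q\circ\varphi^{-1}$. Your bound at the origin, however, takes a genuinely different route.

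The paper (Proposition~\ref{hyp:prop:origin-derivative}) obtains the origin bound as the infinitesimal limit of the global estimate $w(\operatorname{co}u(r\mathbb S^{n-1}))\le2\Lambda_n(r)$. The hulls $(K_r-u(0))/r$ converge in the Hausdorff metric to $Du(0)\overline{\mathbb B^n}$, and the constant comes from $\Lambda_n(r)/r\to2(n-1)/n$, i.e.\ from the explicit hypergeometric profile.

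You instead differentiate the Poisson--Szeg\H{o} representation, $Du(0)=2(n-1)\int F(\zeta)\zeta^T\,d\sigma(\zeta)$. You then recognize $\xi\mapsto\int F(\zeta)\langle\zeta,\xi\rangle\,d\sigma(\zeta)$ as $T_kF$ for the kernel $k(t)=\cos t$. This kernel is continuous and nonincreasing, its negative values are allowed in Theorem~\ref{thm:zonal-main}, and $\lambda_1(k)=\int\eta_n^2\,d\sigma=1/n$. Since the hull of $A\mathbb S^{n-1}$ is the ellipsoid $A\overline{\mathbb B^n}$ with support function $|A^T\xi|$, the origin bound follows at once.

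What your route buys:
\begin{itemize}
\item It avoids the hypergeometric computation, the Hausdorff limit, and any dependence on Theorem~\ref{thm:hyp-main-intro}, which is formally stated for $n\ge3$.
\item It works uniformly for $n\ge2$.
\item It explains the constant as (gradient of the kernel at the origin)$\times$(first multiplier of $\cos t$). With $\nabla_xP(0,\zeta)=n\zeta$, the same two lines give the Euclidean centered bound $\int|Df(0)^T\xi|\,d\sigma\le1$.
\end{itemize}
The paper's route, in exchange, shows the differential estimate as the $r\downarrow0$ limit of the sharp pseudohyperbolic-ball estimate.

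One correction to your list of standard facts. The representation $u=P_h[F]$ cannot be justified ``via $u(\rho x)$ and weak-$*$ limits'' as in the Euclidean case. Dilates of hyperbolic-harmonic functions are not hyperbolic harmonic when $n\ge3$, since $\Delta_h$ is not dilation invariant, as the paper notes. Quote the representation theorem (Stoll, Theorem~7.1.1(c)) instead, as the paper does.
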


\subsection{The pointwise Euclidean Jacobian problem}

For ordinary harmonic maps in dimensions $n\ge3$, precomposition with a
ball automorphism does not preserve harmonicity.  We therefore use the
Poisson gradient to determine the pointwise determinant supremum directly.
For fixed $x\in\mathbb B^n$, put
\[
 g_x(\eta)=\nabla_xP(x,\eta),\qquad
 \mathcal L_x(C)=\int_{\mathbb S^{n-1}}|Cg_x(\eta)|\,d\sigma(\eta),
\]
where $P$ is the ordinary Poisson kernel and $C$ is symmetric positive definite.
Theorem~\ref{local:thm:exact-euclidean-jacobian} proves the exact formula
\[
 \sup_{f:\mathbb B^n\to\mathbb B^n\ \mathrm{harmonic}}|\det Df(x)|
 =\frac1{n^n}\min_{\substack{C>0\\\det C=1}}\mathcal L_x(C)^n.
\]
The minimizing matrix produces explicit extremal boundary data.
Rotational symmetry reduces the matrix problem to a one-variable minimum
with a unique minimizer $q_n(r)$, where $r=|x|$.
For $n\ge3$ and $r>0$, we prove $q_n(r)<1$: the extremal derivative has
a larger radial singular value than its tangential singular values.
In dimension two, the profile is $(1-r^2)^{-2}$.

The same variational formula gives a closed upper bound in terms of the
Poisson-information matrix.  This bound is sharp at the origin and
throughout the disk, but is strictly larger than the exact supremum when
$n\ge3$ and $x\ne0$.  The explicit bound is convenient for direct estimates, while the
variational formula identifies the sharp constant and its extremals.

\subsection{The strategy of the proof}

The global estimates rest on a sharp inequality for spherical trimmed bodies.
For a measurable map
\[
 F:\mathbb S^{n-1}\to\overline{\mathbb B^n}
\]
and $0\le s\le1$, consider
\[
 \mathcal Z_s(F)
 =
 \left\{
 \int_{\mathbb S^{n-1}}a(\eta)F(\eta)\,d\sigma(\eta):
 0\le a\le1,\quad \int_{\mathbb S^{n-1}} a(\eta)\,d\sigma(\eta)=s
 \right\}.
\]
If $\mu=F_{\#}\sigma$ denotes the push-forward of $\sigma$ under $F$, then
$\mathcal Z_s(F)$ is the unnormalized $s$-section of the lift zonoid of
$\mu$, equivalently $\mathcal Z_s(F)=sD_s(\mu)$ for $s>0$;
see \cite[Sec.~2, Def.~1.1., Def.~2.1 and Prop.~2.2, pp.~1999--2002]{KoshevoyMosler1997}.

If
\[
 A_n(\alpha)
 =
 c_n\int_0^\alpha\sin^{n-2}t\,dt,
 \qquad
 \alpha_n=A_n^{-1},
\]
define
\[
 \Phi_n(s)
 =
 \frac{c_n}{n-1}\sin^{n-1}\alpha_n(s).
\]
We prove
\[
 w(\mathcal Z_s(F))
 \le
 2\Phi_n(s).
\]
The directional extremizers are spherical caps.  The proof uses the
bathtub principle in the form recorded in \cite{LiebLoss}, followed by
Fubini and the strict concavity encoded in
\[
 \Phi_n'(s)=\cos\alpha_n(s).
\]

A continuous nonincreasing zonal kernel admits a radial layer-cake
decomposition into spherical caps.  Combining that decomposition with the
trimmed-body inequality leaves precisely the degree-one multiplier
$\lambda_1(k)$.  For the Poisson kernel this multiplier is $\rho$, which
produces Theorem~\ref{thm:main}.  Schematically,
\begin{align*}
 \text{spherical-cap extremality}
 &\longrightarrow
 \text{trimmed-body inequality}\\
 &\longrightarrow
 \text{radial layer cake}\\
 &\longrightarrow
 \text{degree-one spherical harmonic}\\
 &\longrightarrow
 \text{sharp mean-width contraction}.
\end{align*}

The rigidity theorem is obtained by tracing equality through every step:
strict Jensen equality forces the trimming density to be constant, the
bathtub optimizer is unique, and equality in the layer-cake estimate
forces all one-dimensional projections of the boundary datum to be first
spherical harmonics.

Two exact applications illustrate that the mechanism is not specific to
harmonic maps.  In dimension three the Henyey--Greenstein scattering
kernel is precisely the Poisson kernel after normalization of solid angle,
while the trimmed-body inequality gives sharp envelopes for zonoid depth
regions of bounded multivariate distributions.

The paper is organized as follows.  Section~2 collects the convex-geometric and spherical preliminaries.  Section~3 proves the sharp trimmed-body inequality, and Section~4 establishes the monotone-zonal contraction theorem, rigidity, the semigroup consequence, and the necessity of radial monotonicity.  Sections~5--7 treat the Euclidean Poisson kernel, harmonic self-maps, and their convex-geometric consequences.  Section~8 develops the hyperbolic Poisson--Szeg\H{o} specialization, including the explicit first-mode profile, rigidity, pseudohyperbolic balls, and sharp differential estimates.  Section~9 solves the exact pointwise Jacobian extremal problem for ordinary Euclidean harmonic self-maps.  Section~10 records the scattering and zonoid-depth applications.

\section{Preliminaries}

We use standard facts from convex geometry, harmonic function theory, and
spherical harmonic analysis.  Our conventions are compatible with
\cite{Schneider} for convex bodies, \cite{AxlerBourdonRamey} for harmonic
functions in the ball, and \cite{DaiXu} for spherical harmonics and zonal
operators.

Throughout the rest of the paper, $n\ge2$.  We use the convention
$|\mathbb S^0|=2$.  We write $\mathbb B^n=\{x\in\mathbb R^n:|x|<1\}$,
$\mathbb S^{n-1}=\partial\mathbb B^n$, and $\omega_n=|\mathbb B^n|$.
Harmonicity of a vector-valued map means harmonicity of each coordinate.
The notation $\operatorname{co}$ denotes convex hull; all hulls of
continuous spherical images considered below are compact.
The symbol $\sigma$ denotes the normalized rotation-invariant
probability measure on $\mathbb S^{n-1}$:
\[
 \sigma(\mathbb S^{n-1})=1.
\]
If $dS$ denotes the usual surface measure, then
\[
 d\sigma=\frac{dS}{|\mathbb S^{n-1}|}.
\]

\subsection{Support functions and mean width}

For a nonempty compact convex set $K\subset\mathbb R^n$, its support function
is
\[
 h_K(u)=\sup_{x\in K}\langle x,u\rangle,\qquad u\in\mathbb S^{n-1}.
\]
We define
\[
 w(K)=\int_{\mathbb S^{n-1}}
       \bigl(h_K(u)+h_K(-u)\bigr)\,d\sigma(u).
\]
By invariance of $\sigma$ under $u\mapsto-u$,
\begin{equation}\label{eq:meanwidth}
 w(K)=2\int_{\mathbb S^{n-1}}h_K(u)\,d\sigma(u).
\end{equation}
This normalization gives $w(R\overline{\mathbb B^n})=2R$.

We shall use Urysohn's inequality in the form
\begin{equation}\label{eq:urysohn}
 \left(\frac{|K|}{\omega_n}\right)^{1/n}\le \frac{w(K)}2.
\end{equation}
For convex bodies this is classical; see, for example,
\cite{Schneider}.  If $K$ has empty interior, then $|K|=0$ and the volume
estimate needed below is automatic.  Alternatively, one may apply
\eqref{eq:urysohn} to $K+\varepsilon\overline{\mathbb B^n}$ and let
$\varepsilon\downarrow0$.

\subsection{Intrinsic volumes}

We recall standard facts from convex geometry and use the normalization of
Schneider \cite[Chapter~4]{Schneider}.  For $0\le j\le n$, let $V_j(K)$
denote the $j$th intrinsic volume of a nonempty compact convex set
$K\subset\mathbb R^n$.  In the standard Steiner normalization
\cite[Theorem~4.2.1]{Schneider},
\begin{equation}\label{eq:Steiner}
 |K+t\overline{\mathbb B^n}|
 =
 \sum_{j=0}^n
 \omega_{n-j}V_j(K)t^{\,n-j},
 \qquad t\ge0,
\end{equation}
where $\omega_k=|\mathbb B^k|$ and $\omega_0=1$.
Thus
\[
 V_n(K)=|K|,
 \qquad
 V_0(K)=1,
\]
and, when $K$ has nonempty interior,
\[
 2V_{n-1}(K)=\mathcal H^{n-1}(\partial K).
\]
For the Euclidean unit ball,
\begin{equation}\label{eq:intrinsic-ball}
 V_j(\overline{\mathbb B^n})
 =
 \binom{n}{j}\frac{\omega_n}{\omega_{n-j}},
 \qquad 0\le j\le n.
\end{equation}

The first intrinsic volume is proportional to mean width.  With the
normalization \eqref{eq:meanwidth},
\begin{equation}\label{eq:V1-meanwidth}
 \frac{V_1(K)}{V_1(\overline{\mathbb B^n})}
 =
 \frac{w(K)}2.
\end{equation}

The Alexandrov--Fenchel inequalities imply the monotonicity of the
normalized intrinsic volumes; see, for example,
\cite[Sections~7.3--7.4]{Schneider}.  Together with
\eqref{eq:V1-meanwidth}, this yields
\begin{equation}\label{eq:extended-Urysohn}
 \left(
 \frac{V_j(K)}{V_j(\overline{\mathbb B^n})}
 \right)^{1/j}
 \le
 \frac{V_1(K)}{V_1(\overline{\mathbb B^n})}
 =
 \frac{w(K)}2,
 \qquad 1\le j\le n.
\end{equation}
For lower-dimensional compact convex sets the same inequalities follow by
Hausdorff approximation with $K+\varepsilon\overline{\mathbb B^n}$.

\subsection{Spherical caps}

Put
\begin{equation}\label{eq:cn}
 c_n:=\frac{|\mathbb S^{n-2}|}{|\mathbb S^{n-1}|}.
\end{equation}
For $v\in\mathbb S^{n-1}$ and $0\le\alpha\le\pi$, let
\[
 C(v,\alpha)=
 \{u\in\mathbb S^{n-1}:\arccos\langle u,v\rangle\le\alpha\}.
\]
Its normalized measure is
\begin{equation}\label{eq:capmeasure}
 A_n(\alpha)
 =\sigma(C(v,\alpha))
 =c_n\int_0^\alpha \sin^{n-2}t\,dt.
\end{equation}
The function $A_n$ is continuous and strictly increasing from $[0,\pi]$
onto $[0,1]$.  We denote its inverse by
\[
 \alpha_n(s)=A_n^{-1}(s),\qquad 0\le s\le1.
\]
Define the cap profile
\begin{equation}\label{eq:Phi}
 \Phi_n(s)
 =
 \frac{c_n}{n-1}\sin^{n-1}\alpha_n(s).
\end{equation}

\begin{lemma}[Concavity of the cap profile]\label{lem:concavity}
The function $\Phi_n$ is continuous and strictly concave on $[0,1]$.  On $(0,1)$,
\begin{equation}\label{eq:PhiDerivative}
 \Phi_n'(s)=\cos\alpha_n(s).
\end{equation}
\end{lemma}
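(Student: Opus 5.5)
The plan is to compute $\Phi_n'$ by the chain rule and to read off continuity and strict concavity from the monotonicity of $\alpha_n$.

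First, $A_n$ is $C^1$ on $[0,\pi]$ with $A_n'(\alpha)=c_n\sin^{n-2}\alpha$. This derivative is positive on $(0,\pi)$, so $A_n$ is a strictly increasing homeomorphism of $[0,\pi]$ onto $[0,1]$. Hence $\alpha_n$ is continuous and strictly increasing on $[0,1]$. By the inverse function theorem, $\alpha_n$ is $C^1$ on $(0,1)$, with
\[
 \alpha_n'(s)=\frac{1}{c_n\sin^{n-2}\alpha_n(s)}.
\]
For $n=2$ we have $\sin^0\equiv1$, so $\alpha_2(s)=\pi s$ is linear and nothing special happens. Continuity of $\Phi_n$ on the closed interval $[0,1]$ then follows because $\Phi_n$ is a composition of continuous maps.

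Next, on $(0,1)$ the chain rule gives
\[
 \Phi_n'(s)=\frac{c_n}{n-1}(n-1)\sin^{n-2}\alpha_n(s)\cos\alpha_n(s)\,\alpha_n'(s)=\cos\alpha_n(s),
\]
since the factor $c_n\sin^{n-2}\alpha_n(s)$ cancels against $\alpha_n'(s)$. This proves \eqref{eq:PhiDerivative}.

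For strict concavity, note that $\alpha_n$ is strictly increasing with values in $(0,\pi)$ on $(0,1)$, and $\cos$ is strictly decreasing on $[0,\pi]$. Therefore $\Phi_n'$ is strictly decreasing on $(0,1)$, so $\Phi_n$ is strictly concave on $(0,1)$. To pass to the closed interval $[0,1]$, I will use continuity at the endpoints: the concavity inequality $\Phi_n(ts+(1-t)s')\ge t\Phi_n(s)+(1-t)\Phi_n(s')$ extends to endpoint values by taking limits. Strictness survives this limit. One way to see it is the mean value theorem: for $s<s'$ in $[0,1]$, the difference quotients of $\Phi_n$ over $[s,m]$ and over $[m,s']$ equal $\cos\alpha_n$ evaluated at interior points $\xi_1<m<\xi_2$, and $\cos\alpha_n(\xi_1)>\cos\alpha_n(\xi_2)$. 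This strict inequality of slopes gives strict concavity on $[0,1]$ directly.

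The only delicate point is the endpoints, where $\alpha_n'$ blows up for $n\ge3$. The mean-value argument handles this, because it uses only continuity on $[0,1]$ and differentiability on $(0,1)$.
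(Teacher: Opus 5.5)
Your proof is correct and follows the same route as the paper: the chain rule (with $A_n'(\alpha)=c_n\sin^{n-2}\alpha$ cancelling against $\alpha_n'$) gives $\Phi_n'(s)=\cos\alpha_n(s)$, and strict monotonicity of $\alpha_n$ makes $\Phi_n'$ strictly decreasing on $(0,1)$. Your mean-value-theorem argument spells out the passage to the closed interval $[0,1]$, which the paper compresses into ``together with continuity at the endpoints''; note that this argument, not a limiting argument, is what secures strictness at the endpoints.
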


\begin{proof}
From \eqref{eq:capmeasure},
\[
 A_n'(\alpha)=c_n\sin^{n-2}\alpha.
\]
On the other hand,
\[
 \frac{d}{d\alpha}
 \left(
 \frac{c_n}{n-1}\sin^{n-1}\alpha
 \right)
 =
 c_n\sin^{n-2}\alpha\cos\alpha.
\]
The chain rule therefore gives \eqref{eq:PhiDerivative}.  Since
$\alpha_n(s)$ is strictly increasing, $\cos\alpha_n(s)$ is strictly
decreasing.  Thus $\Phi_n'$ is strictly decreasing on $(0,1)$, which,
together with continuity at the endpoints, gives strict concavity on $[0,1]$.
\end{proof}

\begin{remark}\label{rem:n3}
For $n=3$ one has $c_3=1/2$ and
\[
 A_3(\alpha)=\frac{1-\cos\alpha}{2}.
\]
Consequently
\[
 \Phi_3(s)=s(1-s).
\]
This provides a useful consistency check on all constants.
\end{remark}

\section{A sharp spherical trimmed-body inequality}

Let
\[
 F:\mathbb S^{n-1}\longrightarrow\overline{\mathbb B^n}
\]
be measurable.  For $0\le s\le1$, set
\[
 \mathcal A_s
 =
 \left\{
 a\in L^\infty(\mathbb S^{n-1},\sigma):
 0\le a\le1\ \text{a.e.},\quad
 \int_{\mathbb S^{n-1}}a\,d\sigma=s
 \right\},
\]
and define
\begin{equation}\label{eq:trimmed}
 \mathcal Z_s(F)
 =
 \left\{
 \int_{\mathbb S^{n-1}}a(\eta)F(\eta)\,d\sigma(\eta):
 a\in\mathcal A_s
 \right\}.
\end{equation}
The set $\mathcal A_s$ is weak-$*$ compact in $L^\infty$, and the map
\[
 a\longmapsto\int_{\mathbb S^{n-1}}a(\eta)F(\eta)\,d\sigma(\eta)
\]
is weak-$*$ continuous coordinatewise because $F\in L^1$.  Hence
$\mathcal Z_s(F)$ is a compact convex subset of $\mathbb R^n$.

For $u\in\mathbb S^{n-1}$ put
\begin{equation}\label{eq:Mu}
 M_u(s)=h_{\mathcal Z_s(F)}(u)
 =
 \sup_{\substack{0\le a\le1\\\int_{\mathbb S^{n-1}} a\,d\sigma=s}}
 \int_{\mathbb S^{n-1}}
 a(\eta)\langle F(\eta),u\rangle\,d\sigma(\eta).
\end{equation}

The sets \eqref{eq:trimmed} belong naturally to the family of convex
regions generated by zonoid trimming and related average-quantile or
metronoid constructions.  For the zonoid-trimming viewpoint see
\cite{KoshevoyMosler1997,KoshevoyMosler1998,Mosler}; for a modern framework
connecting such constructions with metronoids and sublinear expectations,
see Molchanov and Turin \cite{MolchanovTurin}.  The next estimate is the
spherical extremal inequality needed in the sequel.

\begin{lemma}[Measurable maximizing selectors]\label{lem:quantile-selector}
Fix $s\in(0,1)$.  There exists a jointly measurable function
\[
 (u,\eta)\longmapsto a_{u,s}(\eta)\in[0,1]
\]
such that
\[
 \int_{\mathbb S^{n-1}} a_{u,s}(\eta)\,d\sigma(\eta)=s
\]
and
\[
 M_u(s)
 =
 \int_{\mathbb S^{n-1}} a_{u,s}(\eta)\langle F(\eta),u\rangle\,d\sigma(\eta)
\]
for every $u$ outside a null set.
\end{lemma}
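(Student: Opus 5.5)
We construct the selector explicitly by the bathtub principle, using a quantile of the scalar function $\phi_u(\eta)=\langle F(\eta),u\rangle$, which is jointly measurable in $(u,\eta)$ and satisfies $|\phi_u|\le1$. For fixed $u$, let
\[
 G_u(t)=\sigma\{\eta:\phi_u(\eta)>t\},
\]
a nonincreasing right-continuous function of $t$, and define the upper $s$-quantile
\[
 t_u=\inf\{t\in\mathbb Q: G_u(t)\le s\}.
\]
Since $G_u(t)$ is jointly measurable in $(u,t)$ by Fubini/Tonelli, and the infimum may be taken over a countable set thanks to right continuity, $u\mapsto t_u$ is Borel. By right continuity $G_u(t_u)\le s$, and $\sigma\{\phi_u\ge t_u\}\ge s$.

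Next we set
\[
 a_{u,s}(\eta)=\mathbf 1_{\{\phi_u(\eta)>t_u\}}+\theta_u\,\mathbf 1_{\{\phi_u(\eta)=t_u\}},
 \qquad
 \theta_u=\frac{s-G_u(t_u)}{\sigma\{\phi_u=t_u\}}.
\]
Here $\theta_u:=0$ when the denominator vanishes, in which case $G_u(t_u)=s$. This gives $\theta_u\in[0,1]$ and $\int a_{u,s}\,d\sigma=s$. Each ingredient is a measurable function of $(u,\eta)$, being built from $\phi_u(\eta)$, $t_u$ and the measurable maps $u\mapsto\sigma\{\phi_u=t_u\}$ and $u\mapsto G_u(t_u)$, which are measurable by Tonelli applied to jointly measurable sets. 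Hence $a_{u,s}$ is jointly measurable.

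Optimality follows from the bathtub argument \cite{LiebLoss}. For any $a\in\mathcal A_s$,
\[
 \int(a_{u,s}-a)\phi_u\,d\sigma=\int(a_{u,s}-a)(\phi_u-t_u)\,d\sigma\ge0,
\]
because $\int(a_{u,s}-a)\,d\sigma=0$, and $a_{u,s}-a\ge0$ where $\phi_u>t_u$ while $a_{u,s}-a\le0$ where $\phi_u<t_u$. Thus the supremum in \eqref{eq:Mu} is attained by $a_{u,s}$. This actually holds for every $u$, so the exceptional null set may be taken empty.

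The main obstacle is the joint measurability of $t_u$ and $\theta_u$. The countable-infimum representation together with Tonelli handles this. If one worries about null sets in the definition of $F$, we fix a Borel representative of $F$ first.
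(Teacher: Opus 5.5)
Your proposal is correct and follows the paper's proof essentially step for step: the same rational-infimum upper quantile $\tau_s(u)$ built from $\sigma\{X(u,\cdot)>\lambda\}$, the same tie-splitting weight $\theta_s(u)$ on the level set, measurability via Fubini--Tonelli, and the same threshold comparison $\int(a-a_{u,s})(X-\tau_s(u))\,d\sigma\le0$. Your observation that the exceptional null set can be taken empty also holds for the paper's construction.
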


\begin{proof}
Set $X(u,\eta)=\langle F(\eta),u\rangle$ and
\[
 m(u,\lambda)=
 \sigma\{\eta:X(u,\eta)>\lambda\}.
\]
The function $(u,\lambda)\mapsto m(u,\lambda)$ is measurable by Fubini.
Define
\[
 \tau_s(u)=
 \inf\{\lambda\in\mathbb Q:m(u,\lambda)\le s\}.
\]
Since $|X|\le1$ and $0<s<1$, the defining set is nonempty and
$\tau_s(u)\in[-1,1]$.  The map $\tau_s$ is measurable and
\[
 \sigma\{X(u,\cdot)>\tau_s(u)\}
 \le s
 \le
 \sigma\{X(u,\cdot)\ge\tau_s(u)\}.
\]
Set
\[
 b_s(u)
 =
 \sigma\{X(u,\cdot)>\tau_s(u)\},
 \qquad
 d_s(u)
 =
 \sigma\{X(u,\cdot)=\tau_s(u)\}.
\]
Both functions are measurable.  Define
\[
 \theta_s(u)
 =
 \begin{cases}
 \dfrac{s-b_s(u)}{d_s(u)},& d_s(u)>0,\\[1ex]
 0,& d_s(u)=0.
 \end{cases}
\]
The quantile inequalities above give $0\le\theta_s(u)\le1$.  Therefore
\[
 a_{u,s}(\eta)
 =
 \mathbf1_{\{X(u,\eta)>\tau_s(u)\}}
 +
 \theta_s(u)\mathbf1_{\{X(u,\eta)=\tau_s(u)\}}
\]
is jointly measurable and satisfies
\[
 \int_{\mathbb S^{n-1}} a_{u,s}(\eta)\,d\sigma(\eta)=s.
\]
The usual threshold comparison
\[
 \int_{\mathbb S^{n-1}} (a-a_{u,s})(X-\tau_s(u))\,d\sigma\le0
\]
for every competitor $a$ of mass $s$ shows that $a_{u,s}$ is a bathtub
maximizer.
\end{proof}

\begin{theorem}[Sharp trimmed-body mean-width inequality]\label{thm:trimmed}
For every measurable
$F:\mathbb S^{n-1}\to\overline{\mathbb B^n}$ and every $0\le s\le1$,
\begin{equation}\label{eq:trimmedbound}
 \int_{\mathbb S^{n-1}}M_u(s)\,d\sigma(u)\le \Phi_n(s).
\end{equation}
Equivalently,
\[
 w(\mathcal Z_s(F))\le 2\Phi_n(s).
\]
The estimate is sharp.
\end{theorem}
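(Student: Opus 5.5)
We use the measurable selectors of Lemma~\ref{lem:quantile-selector} to swap the order of integration, and then a bathtub argument in the $u$-variable to reduce everything to spherical caps. The endpoint cases $s=0$ and $s=1$ are immediate: $\mathcal Z_0(F)=\{0\}$ and $\mathcal Z_1(F)$ is a single point, so both sides vanish, since $\Phi_n(0)=\Phi_n(1)=0$. Fix $s\in(0,1)$ and let $a_{u,s}$ be the jointly measurable maximizer. By Fubini,
\[
 \int_{\mathbb S^{n-1}}M_u(s)\,d\sigma(u)
 =\int_{\mathbb S^{n-1}}\Bigl\langle F(\eta),\int_{\mathbb S^{n-1}}a_{u,s}(\eta)\,u\,d\sigma(u)\Bigr\rangle d\sigma(\eta)
 \le\int_{\mathbb S^{n-1}}\Bigl|\int_{\mathbb S^{n-1}}a_{u,s}(\eta)\,u\,d\sigma(u)\Bigr|\,d\sigma(\eta),
\]
where we used $|F|\le1$.

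Now fix $\eta$ and put $b(u)=a_{u,s}(\eta)\in[0,1]$ and $m(\eta)=\int b\,d\sigma$. Fubini gives $\int m(\eta)\,d\sigma(\eta)=s$. For any unit vector $v$, the bathtub principle on the sphere applied to the function $u\mapsto\langle u,v\rangle$ shows that
\[
 \int b(u)\langle u,v\rangle\,d\sigma(u)
\]
is maximized, among $0\le b\le1$ with mass $m$, by the indicator of the cap $C(v,\alpha_n(m))$. Since the level sets of $\langle u,v\rangle$ are null, this maximizer is unique. A direct computation in spherical coordinates evaluates the maximum as
\[
 c_n\int_0^{\alpha_n(m)}\cos t\,\sin^{n-2}t\,dt=\Phi_n(m).
\]
Taking $v$ in the direction of the inner vector integral therefore gives
\[
 \Bigl|\int_{\mathbb S^{n-1}}a_{u,s}(\eta)\,u\,d\sigma(u)\Bigr|\le\Phi_n(m(\eta)).
\]

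Integrating in $\eta$ and applying Jensen's inequality with the concavity of $\Phi_n$ from Lemma~\ref{lem:concavity} yields
\[
 \int M_u(s)\,d\sigma(u)\le\int\Phi_n(m(\eta))\,d\sigma(\eta)\le\Phi_n\Bigl(\int m\,d\sigma\Bigr)=\Phi_n(s).
\]
The equivalent form $w(\mathcal Z_s(F))\le2\Phi_n(s)$ follows from \eqref{eq:meanwidth}.

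For sharpness, take $F(\eta)=\eta$. For each $u$ the optimal $a$ is then the indicator of the cap $C(u,\alpha_n(s))$, so $M_u(s)=\Phi_n(s)$ for every $u$, and equality holds.

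The main obstacle is the measure-theoretic bookkeeping rather than any inequality. The interchange needs the selector to be jointly measurable, which Lemma~\ref{lem:quantile-selector} supplies, and it needs the identity $M_u(s)=\int a_{u,s}\langle F,u\rangle\,d\sigma$ to hold for a.e.\ $u$, which is enough under the outer integral. One must also check that $\eta\mapsto m(\eta)$ is measurable with values in $[0,1]$, so that Jensen applies. The cap evaluation itself is just $\Phi_n'=\cos\alpha_n$ integrated from $0$.
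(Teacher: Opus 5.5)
Your proof is correct and follows the paper's argument essentially step for step. Both use the measurable selectors of Lemma~\ref{lem:quantile-selector}, swap the integrals by Fubini, bound the inner $u$-integral by the cap value $\Phi_n(q(\eta))$ via the bathtub principle, and then apply Jensen with the concavity of $\Phi_n$; sharpness comes from $F(\eta)=\eta$ in both. The only cosmetic difference is in the bathtub step: you first use $|F|\le1$ to bound $\langle F(\eta),\int a_{u,s}(\eta)u\,d\sigma(u)\rangle$ and take $v$ along the vector integral, whereas the paper writes $F(\eta)=\rho(\eta)v(\eta)$ and takes $v$ along $F(\eta)$.
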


\begin{proof}
At $s=0$ and $s=1$, the trimmed body is a singleton and both sides of
\eqref{eq:trimmedbound} are zero.  Fix $s\in(0,1)$ and use
Lemma~\ref{lem:quantile-selector} to choose jointly measurable maximizers
$a_u=a_{u,s}$.  They satisfy
\begin{equation}\label{eq:selector}
 0\le a_u(\eta)\le1,\qquad
 \int_{\mathbb S^{n-1}} a_u(\eta)\,d\sigma(\eta)=s,
\end{equation}
and
\[
 M_u(s)=\int_{\mathbb S^{n-1}} a_u(\eta)\langle F(\eta),u\rangle\,d\sigma(\eta).
\]
Because the integrand is bounded, Fubini's theorem gives
\begin{align}
 \int_{\mathbb S^{n-1}}M_u(s)\,d\sigma(u)
 &=
 \int_{\mathbb S^{n-1}}
 \left[
 \int_{\mathbb S^{n-1}}
 a_u(\eta)\langle F(\eta),u\rangle\,d\sigma(u)
 \right]d\sigma(\eta).
 \label{eq:FubiniTrim}
\end{align}
Fix $\eta$.  Write
\[
 F(\eta)=\rho(\eta)v(\eta),
 \qquad 0\le\rho(\eta)\le1,
\]
where $v(\eta)\in\mathbb S^{n-1}$ if $\rho(\eta)>0$, and choose
$v(\eta)$ arbitrarily if $F(\eta)=0$.  Define
\begin{equation}\label{eq:qeta}
 q(\eta)=\int_{\mathbb S^{n-1}}a_u(\eta)\,d\sigma(u).
\end{equation}

We now maximize, for fixed $v\in\mathbb S^{n-1}$,
\[
 \int_{\mathbb S^{n-1}}b(u)\langle u,v\rangle\,d\sigma(u)
\]
under $0\le b\le1$ and $\int_{\mathbb S^{n-1}} b\,d\sigma=q$.  The superlevel sets of
$u\mapsto\langle u,v\rangle$ are spherical caps centered at $v$.
Applying the bathtub principle once more
\cite[Theorem~1.14]{LiebLoss}, the cap of measure $q$ is an optimizer.  Hence
\begin{align}
 \sup_{\substack{0\le b\le1\\\int_{\mathbb S^{n-1}} b\,d\sigma=q}}
 \int_{\mathbb S^{n-1}} b(u)\langle u,v\rangle\,d\sigma(u)
 &=
 c_n\int_0^{\alpha_n(q)}
 \cos t\,\sin^{n-2}t\,dt \notag\\
 &=
 \frac{c_n}{n-1}
 \sin^{n-1}\alpha_n(q)
 =\Phi_n(q).
 \label{eq:capextremal}
\end{align}
Consequently,
\[
 \int_{\mathbb S^{n-1}} a_u(\eta)\langle F(\eta),u\rangle\,d\sigma(u)
 \le
 \rho(\eta)\Phi_n(q(\eta))
 \le
 \Phi_n(q(\eta)).
\]
Using \eqref{eq:FubiniTrim},
\begin{equation}\label{eq:preJensen}
 \int_{\mathbb S^{n-1}} M_u(s)\,d\sigma(u)
 \le
 \int_{\mathbb S^{n-1}} \Phi_n(q(\eta))\,d\sigma(\eta).
\end{equation}
Moreover, by Fubini and \eqref{eq:selector},
\begin{align}
 \int_{\mathbb S^{n-1}} q(\eta)\,d\sigma(\eta)
 &=
 \int_{\mathbb S^{n-1}}\!\!\int_{\mathbb S^{n-1}} a_u(\eta)\,d\sigma(u)d\sigma(\eta) \notag\\
 &=
 \int_{\mathbb S^{n-1}}
 \left(\int_{\mathbb S^{n-1}} a_u(\eta)\,d\sigma(\eta)\right)d\sigma(u)
 =s.
 \label{eq:qmean}
\end{align}
Lemma~\ref{lem:concavity} and Jensen's inequality now imply
\[
 \int_{\mathbb S^{n-1}} \Phi_n(q(\eta))\,d\sigma(\eta)
 \le
 \Phi_n\!\left(\int_{\mathbb S^{n-1}} q(\eta)\,d\sigma(\eta)\right)
 =
 \Phi_n(s).
\]
This proves \eqref{eq:trimmedbound}.

To see sharpness, take $F(\eta)=\eta$.  By rotational symmetry,
$\mathcal Z_s(F)$ is a Euclidean ball.  Its support function in any direction
is exactly the cap extremal value \eqref{eq:capextremal}, namely
$\Phi_n(s)$.  Hence equality holds in \eqref{eq:trimmedbound}.
\end{proof}

\begin{remark}[Probability-space form]\label{rem:probability-trimmed}
The source sphere plays no geometric role in the proof of
Theorem~\ref{thm:trimmed}; only its probability measure is used.  Hence the
same argument applies verbatim on an arbitrary probability space
$(\Omega,\mathcal F,\mathbb P)$.  More precisely, if
$Y:\Omega\to\overline{\mathbb B^n}$ is measurable and
\[
 \mathcal Z_s^\Omega(Y)
 =
 \left\{
 \mathbb E[aY]:
 0\le a\le1,\quad \mathbb E a=s
 \right\},
\]
then
\[
 w(\mathcal Z_s^\Omega(Y))\le2\Phi_n(s).
\]
This formulation will be used in the application to zonoid depth.
\end{remark}

\begin{remark}[The planar member of the spherical family]\label{rem:planar}
For $n=2$ one has
\[
 c_2=\frac1\pi,\qquad
 A_2(\alpha)=\frac{\alpha}{\pi},\qquad
 \Phi_2(s)=\frac{\sin(\pi s)}{\pi}.
\]
Thus Theorem~\ref{thm:trimmed} gives precisely the planar sine profile.
Moreover, Cauchy's perimeter formula yields
$\operatorname{Per}(K)=\pi w(K)$ for compact planar convex sets.  Hence
the planar trimmed-range and perimeter mechanisms are exactly the
two-dimensional specialization of the spherical mean-width theory.
\end{remark}

\section{Monotone zonal kernels}

We now prove Theorems~\ref{thm:zonal-main} and
\ref{thm:zonal-rigidity}.  The Poisson theorem will then follow as a
special case.

\subsection{The sharp contraction theorem}

Let $k:[0,\pi]\to\mathbb R$ be continuous and nonincreasing.  Associate
with $k$ the finite positive Lebesgue--Stieltjes measure $\nu_k$ defined by
\[
 \nu_k((a,b])=k(a)-k(b),
 \qquad 0\le a<b\le\pi.
\]
Here $\nu_k(\{0\})=0$.  Since $k$ is continuous, $\nu_k$ has no atoms.  The following identity is
the Lebesgue--Stieltjes form of the standard layer-cake representation;
compare \cite[Theorem~1.13]{LiebLoss}:
\begin{equation}\label{eq:zonal-layercake}
 k(\theta)
 =
 k(\pi)
 +
 \int_{(0,\pi)}
 \mathbf1_{\{\theta\le t\}}\,d\nu_k(t),
 \qquad 0\le\theta\le\pi.
\end{equation}
If $k$ is absolutely continuous, then
$d\nu_k(t)=-k'(t)\,dt$, so \eqref{eq:zonal-layercake} reduces to
\[
k(\theta)=k(\pi)+\int_\theta^\pi (-k'(t))\,dt.
\]
For measurable $F:\mathbb S^{n-1}\to\overline{\mathbb B^n}$ set
\[
 G_k(\xi)=T_kF(\xi)
 =
 \int_{\mathbb S^{n-1}}
 k(d(\xi,\eta))F(\eta)\,d\sigma(\eta)
\]
and
\[
 K_k=\operatorname{co}G_k(\mathbb S^{n-1}).
\]
Because $k$ is continuous and $F$ is bounded, $G_k$ is continuous and
$K_k$ is compact and convex.

\begin{proof}[Proof of Theorem~\ref{thm:zonal-main}]
Fix $u\in\mathbb S^{n-1}$ and write
\[
 X_u(\eta)=\langle F(\eta),u\rangle.
\]
Using \eqref{eq:zonal-layercake},
\begin{align*}
 h_{K_k}(u)
 &=
 \sup_{\xi\in\mathbb S^{n-1}}
 \int_{\mathbb S^{n-1}} k(d(\xi,\eta))X_u(\eta)\,d\sigma(\eta)\\
 &\le
 k(\pi)\int_{\mathbb S^{n-1}} X_u(\eta)\,d\sigma(\eta)
 +
 \int_{(0,\pi)}
 M_u(A_n(t))\,d\nu_k(t).
\end{align*}
Indeed, the cap
\[
 C(\xi,t)=\{\eta:d(\xi,\eta)\le t\}
\]
has normalized measure $A_n(t)$, so
\[
 \int_{C(\xi,t)}X_u\,d\sigma
 \le M_u(A_n(t)).
\]

Integrating in $u$, the constant term vanishes:
\[
 \int_{\mathbb S^{n-1}}
 \left\langle
 \int_{\mathbb S^{n-1}}F\,d\sigma,u
 \right\rangle d\sigma(u)=0.
\]
The trimmed-body theorem therefore gives
\begin{align}
 \int_{\mathbb S^{n-1}}h_{K_k}(u)\,d\sigma(u)
 &\le
 \int_{(0,\pi)}
 \Phi_n(A_n(t))\,d\nu_k(t)\notag\\
 &=
 \frac{c_n}{n-1}
 \int_{(0,\pi)}
 \sin^{n-1}t\,d\nu_k(t).
\label{eq:zonal-before-ibp}
\end{align}
Put
\[
 g_n(t)=\frac{c_n}{n-1}\sin^{n-1}t.
\]
Since $g_n(0)=g_n(\pi)=0$, Stieltjes integration by parts gives
\begin{align}
 \int_{(0,\pi)}g_n(t)\,d\nu_k(t)
 &=
 \int_0^\pi k(t)g_n'(t)\,dt\notag\\
 &=
 c_n\int_0^\pi
 k(t)\cos t\,\sin^{n-2}t\,dt
 =
 \lambda_1(k).
\label{eq:zonal-lambda}
\end{align}
Consequently
\[
 w(K_k)
 =
 2\int_{\mathbb S^{n-1}} h_{K_k}(u)\,d\sigma(u)
 \le2\lambda_1(k).
\]

By Lemma~\ref{lem:degree-one-multiplier}, taking
\[
 F(\eta)=Q\eta,\qquad Q\in O(n),
\]
gives
\[
 T_kF(\xi)=\lambda_1(k)Q\xi.
\]
Also \eqref{eq:zonal-before-ibp}--\eqref{eq:zonal-lambda} show
$\lambda_1(k)\ge0$.  Hence
\[
 K_k=\lambda_1(k)Q\overline{\mathbb B^n}
\]
and equality holds.  This proves sharpness.
\end{proof}

\subsection{Equality structure of the trimmed-body theorem}

We record continuity in the trimming parameter.  Together with the
measurable selectors from Lemma~\ref{lem:quantile-selector}, this will
allow us to trace equality in the contraction estimate.

\begin{lemma}[Continuity in the trimming parameter]\label{lem:M-continuous}
For fixed measurable $F$ with $|F|\le1$ and fixed
$u\in\mathbb S^{n-1}$, the function $s\mapsto M_u(s)$ is
$1$-Lipschitz on $[0,1]$.  Consequently
\[
 J(s):=\int_{\mathbb S^{n-1}}M_u(s)\,d\sigma(u)
\]
is continuous.
\end{lemma}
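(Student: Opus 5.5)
The plan is to compare the maximization problems defining $M_u(s)$ and $M_u(s')$ directly, using the fact that $|X_u(\eta)|=|\langle F(\eta),u\rangle|\le1$. Fix $u$ and $0\le s<s'\le1$, and write $X=X_u$.

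First, the lower direction. Given any $a\in\mathcal A_s$, I construct an element of $\mathcal A_{s'}$ that is close to it. Consider $a_\theta=a+\theta(1-a)$ for $\theta\in[0,1]$. Its mass $s+\theta(1-s)$ depends continuously on $\theta$ and runs from $s$ to $1$, so some $\theta$ gives mass exactly $s'$. For that choice, $a'=a_\theta\in\mathcal A_{s'}$ and $a'\ge a$, with $\int(a'-a)\,d\sigma=s'-s$. Hence
\[
 \int a'X\,d\sigma\ \ge\ \int aX\,d\sigma-\int(a'-a)|X|\,d\sigma\ \ge\ \int aX\,d\sigma-(s'-s).
\]
Taking the supremum over $a$ gives $M_u(s')\ge M_u(s)-(s'-s)$.

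Second, the upper direction. Given $a'\in\mathcal A_{s'}$, I take $a=\theta a'$ with $\theta=s/s'$, which lies in $\mathcal A_s$ (if $s'=0$ then $s=0$ and there is nothing to prove). Then $0\le a'-a$ and $\int(a'-a)\,d\sigma=s'-s$, so
\[
 \int aX\,d\sigma\ \ge\ \int a'X\,d\sigma-(s'-s).
\]
This gives $M_u(s)\ge M_u(s')-(s'-s)$. Combining the two directions, $|M_u(s)-M_u(s')|\le|s-s'|$.

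Finally, for $J$: the Lipschitz bound is uniform in $u$, and $|M_u(s)|\le s\le1$. So $|J(s)-J(s')|\le|s-s'|$ directly, and $J$ is continuous (indeed $1$-Lipschitz). Measurability of $u\mapsto M_u(s)$ is needed for $J$ to make sense. It holds because $M_u(s)$ is the support function of the compact convex set $\mathcal Z_s(F)$, and support functions are continuous in $u$. There is no real obstacle here; the only point needing care is producing competitors of exactly the prescribed mass, which the convex interpolations above handle.
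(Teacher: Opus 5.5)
Your proof is correct and follows the paper's argument: it uses the same two competitors, $\frac{s}{s'}a'$ and $a+\frac{s'-s}{1-s}(1-a)$, and the bound $|X_u|\le1$ to control the mass difference $s'-s$. The differences are only minor refinements: you take suprema over arbitrary competitors rather than invoking existence of maximizers, and you note explicitly that $u\mapsto M_u(s)$ is continuous as a support function, and that $J$ is in fact $1$-Lipschitz.
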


\begin{proof}
Write $X_u(\eta)=\langle F(\eta),u\rangle$, so that $|X_u|\le1$.
Fix $0\le s<t\le1$.  Since the admissible sets are weak-$*$ compact and
the objective is weak-$*$ continuous, maximizers exist.  Let $a_t$ be a
maximizer of mass $t$.  The function
\[
 b=\frac{s}{t}a_t
\]
has mass $s$ (with $t>0$ because $s<t$), and hence
\[
 M_u(s)
 \ge \int_{\mathbb S^{n-1}} b(\eta)X_u(\eta)\,d\sigma(\eta)
 \ge M_u(t)-(t-s),
\]
because $\int_{\mathbb S^{n-1}}(a_t-b)\,d\sigma=t-s$ and $|X_u|\le1$.
Conversely, if $a_s$ is a maximizer of mass $s$, then
\[
 c=a_s+\frac{t-s}{1-s}(1-a_s)
\]
is admissible of mass $t$ (the case $s=1$ cannot occur when $s<t$), and
\[
 M_u(t)
 \ge \int_{\mathbb S^{n-1}} c(\eta)X_u(\eta)\,d\sigma(\eta)
 \ge M_u(s)-(t-s).
\]
Thus $|M_u(t)-M_u(s)|\le t-s$.  Integration in $u$ gives the continuity of
$J$.
\end{proof}

\subsection{Rigidity for strictly decreasing kernels}

\begin{proof}[Proof of Theorem~\ref{thm:zonal-rigidity}]
The converse direction has already been proved in the sharpness part of
Theorem~\ref{thm:zonal-main}.  Suppose therefore that
\begin{equation}\label{eq:zonal-rigidity-equality}
 w(K_k)=2\lambda_1(k).
\end{equation}
Because $k$ is strictly decreasing, the Stieltjes measure $\nu_k$ has full
support:
\[
 \nu_k((a,b))>0
 \qquad
 (0\le a<b\le\pi).
\]

Let
\[
 J(s)=\int_{\mathbb S^{n-1}} M_u(s)\,d\sigma(u).
\]
The proof of Theorem~\ref{thm:zonal-main} gives
\[
 \int_{\mathbb S^{n-1}} h_{K_k}(u)\,d\sigma(u)
 \le
 \int_{(0,\pi)}J(A_n(t))\,d\nu_k(t)
 \le
 \int_{(0,\pi)}\Phi_n(A_n(t))\,d\nu_k(t)
 =
 \lambda_1(k).
\]
By \eqref{eq:zonal-rigidity-equality}, both inequalities are equalities.
The function
\[
 t\longmapsto
 \Phi_n(A_n(t))-J(A_n(t))
\]
is continuous and nonnegative by Lemma~\ref{lem:M-continuous}.  Since
$\nu_k$ has full support, its integral can vanish only if
\begin{equation}\label{eq:J-equality-all}
 J(s)=\Phi_n(s),
 \qquad 0\le s\le1.
\end{equation}

Fix $s\in(0,1)$ and choose the maximizing selectors of
Lemma~\ref{lem:quantile-selector}.  Define
\[
 q_s(\eta)
 =
 \int_{\mathbb S^{n-1}}a_{u,s}(\eta)\,d\sigma(u).
\]
Then $\int_{\mathbb S^{n-1}} q_s(\eta)\,d\sigma(\eta)=s$.  Retaining all inequalities in the proof of
Theorem~\ref{thm:trimmed}, and using \eqref{eq:J-equality-all}, gives
\begin{align}
 \Phi_n(s)
 &=
 J(s)\notag\\
 &\le
 \int_{\mathbb S^{n-1}} |F(\eta)|\Phi_n(q_s(\eta))\,d\sigma(\eta)\notag\\
 &\le
 \int_{\mathbb S^{n-1}} \Phi_n(q_s(\eta))\,d\sigma(\eta)\notag\\
 &\le
 \Phi_n\!\left(\int_{\mathbb S^{n-1}} q_s(\eta)\,d\sigma(\eta)\right)
 =
 \Phi_n(s).
\label{eq:zonal-rigidity-chain}
\end{align}
Thus equality holds at every stage.  Since
\[
 \Phi_n'(s)=\cos\alpha_n(s)
\]
is strictly decreasing on $(0,1)$, $\Phi_n$ is strictly concave on $[0,1]$.
Equality in Jensen's inequality yields
\begin{equation}\label{eq:zonal-q}
 q_s(\eta)=s
 \qquad\text{for a.e. }\eta.
\end{equation}
As $\Phi_n(s)>0$, equality in the preceding inequality also gives
\begin{equation}\label{eq:zonal-F-sphere}
 |F(\eta)|=1
 \qquad\text{for a.e. }\eta.
\end{equation}

Put
\[
 c_s=\cos\alpha_n(s).
\]
For fixed $v\in\mathbb S^{n-1}$, the unique bathtub maximizer of mass $s$
for the function $u\mapsto\langle u,v\rangle$ is, up to a null set,
\[
 \mathbf1_{\{\langle u,v\rangle>c_s\}}.
\]
Indeed, if $b_*$ denotes this indicator and $b$ is any competitor of mass
$s$, then
\[
 \int_{\mathbb S^{n-1}}(b_*(u)-b(u))(\langle u,v\rangle-c_s)\,d\sigma(u)\ge0,
\]
and equality forces $b=b_*$ away from the zero-measure level set
$\{\langle u,v\rangle=c_s\}$.

Equality in the first step of \eqref{eq:zonal-rigidity-chain}, together
with \eqref{eq:zonal-q} and \eqref{eq:zonal-F-sphere}, therefore gives
\begin{equation}\label{eq:zonal-selector-cap}
 a_{u,s}(\eta)
 =
 \mathbf1_{\{\langle F(\eta),u\rangle>c_s\}}
 \qquad\text{for a.e. }(u,\eta).
\end{equation}
Choose a countable dense set $S_0\subset(0,1)$.  Since every selector has
mass $s$, Fubini and \eqref{eq:zonal-selector-cap} imply that for almost
every $u$ and every $s\in S_0$,
\begin{equation}\label{eq:zonal-tail-dense}
 \sigma\{
 \eta:\langle F(\eta),u\rangle>c_s
 \}
 =
 s.
\end{equation}
The numbers $c_s$, $s\in S_0$, are dense in $(-1,1)$.  For the model
variable $Z(\eta)=\eta_n$ one has
\[
 \sigma\{Z>c_s\}=s,
 \qquad 0<s<1,
\]
by the definition of $c_s=\cos\alpha_n(s)$.  Hence, for almost every $u$,
\eqref{eq:zonal-tail-dense} identifies the tail function
$c\mapsto\sigma\{\langle F(\eta),u\rangle>c\}$ with the tail function of
$Z$ on the dense set $\{c_s:s\in S_0\}$.  Both tail functions are monotone
and right-continuous, while the spherical-coordinate distribution is
continuous.  It follows that the two tail functions agree for every
$c\in(-1,1)$, and therefore
\begin{equation}\label{eq:zonal-projection-law}
 \langle F(\eta),u\rangle
 \quad\text{has the same distribution as}\quad
 \eta_n.
\end{equation}
In particular this distribution has no atoms.

We next return to the first inequality in the proof of
Theorem~\ref{thm:zonal-main}.  Define
\[
 B_k(u)
 =
 k(\pi)\int_{\mathbb S^{n-1}}X_u\,d\sigma
 +
 \int_{(0,\pi)}M_u(A_n(t))\,d\nu_k(t).
\]
Then
\[
 h_{K_k}(u)\le B_k(u)
 \qquad\text{for every }u.
\]
Because \eqref{eq:J-equality-all} holds, integration in $u$ gives
\[
 \int_{\mathbb S^{n-1}}B_k(u)\,d\sigma(u)
 =
 \lambda_1(k)
 =
 \int_{\mathbb S^{n-1}}h_{K_k}(u)\,d\sigma(u).
\]
Hence the nonnegative deficit $B_k-h_{K_k}$ vanishes for almost every
$u$.

Fix such a direction $u$ for which
\eqref{eq:zonal-projection-law} also holds.  Choose
$\xi_u\in\mathbb S^{n-1}$ at which
\[
 \xi\longmapsto
 \int_{\mathbb S^{n-1}} k(d(\xi,\eta))X_u(\eta)\,d\sigma(\eta)
\]
attains its maximum.  Applying the layer-cake identity at this maximizing
direction yields
\[
 B_k(u)-h_{K_k}(u)
 =
 \int_{(0,\pi)}D_u(t)\,d\nu_k(t),
\]
where
\[
 D_u(t)
 =
 M_u(A_n(t))
 -
 \int_{C(\xi_u,t)}X_u(\eta)\,d\sigma(\eta)
 \ge0.
\]
Thus the integral of $D_u$ is zero.  The function $D_u$ is continuous:
the first term is continuous by Lemma~\ref{lem:M-continuous}, while the
second is continuous because spherical caps vary continuously in measure
and $X_u$ is bounded.  Since $\nu_k$ has full support,
\[
 D_u(t)=0,\qquad0<t<\pi.
\]
Thus every cap $C(\xi_u,t)$ is a bathtub optimizer of mass $A_n(t)$ for
$X_u$.  By \eqref{eq:zonal-projection-law},
\[
 \sigma\{X_u>\cos t\}=A_n(t),
\]
and the level set $\{X_u=\cos t\}$ has measure zero.  Hence the unique
optimizer of this mass is $\{X_u>\cos t\}$, and therefore, for every
$t\in(0,\pi)$,
\begin{equation}\label{eq:zonal-superlevel}
 \mathbf1_{\{X_u(\eta)>\cos t\}}
 =
 \mathbf1_{\{\langle\eta,\xi_u\rangle>\cos t\}}
 \qquad\text{for a.e. }\eta.
\end{equation}

For each fixed $t$, the two indicators in
\eqref{eq:zonal-superlevel} agree for almost every $\eta$.  Integrating
their absolute difference over $(0,\pi)\times\mathbb S^{n-1}$ and applying
Tonelli's theorem therefore shows that, for almost every $\eta$, they agree
for almost every $t$.  If two numbers $a,b\in[-1,1]$ satisfy
\[
 \mathbf1_{\{a>\cos t\}}=\mathbf1_{\{b>\cos t\}}
 \quad\text{for a.e. }t\in(0,\pi),
\]
then $a=b$, since otherwise the two indicators differ on a nonempty interval
of $t$-values.  Consequently
\[
 X_u(\eta)=\langle\eta,\xi_u\rangle
 \qquad\text{for a.e. }\eta.
\]
Thus
\[
 \langle F(\cdot),u\rangle
 \in\mathcal H_1(\mathbb S^{n-1})
\]
for almost every $u$.

Finally set
\[
 E=
 \{u\in\mathbb R^n:
 \langle F(\cdot),u\rangle\in\mathcal H_1(\mathbb S^{n-1})\}.
\]
This is a linear subspace of $\mathbb R^n$, and
$E\cap\mathbb S^{n-1}$ has full spherical measure.  Hence $E=\mathbb R^n$.
Therefore every coordinate of $F$ is a first spherical harmonic, so
\[
 F(\eta)=A\eta
 \qquad\text{a.e.}
\]
for some real matrix $A$.  By \eqref{eq:zonal-F-sphere},
$|A\eta|=1$ for almost every $\eta\in\mathbb S^{n-1}$.  Continuity then
gives
\[
 \eta^T(A^TA-I)\eta=0
 \qquad\text{for every }\eta\in\mathbb S^{n-1},
\]
hence $A^TA=I$.  Thus $A=Q\in O(n)$, completing the proof.
\end{proof}

\subsection{A semigroup consequence}

We use the standard terminology of Markov semigroups; see, for example,
\cite[Chapter~1]{BakryGentilLedoux}.  In the present subsection, a Markov
semigroup $(T_t)_{t\ge0}$ is a family of linear operators satisfying the
semigroup law
\[
 T_0=I,\qquad T_{t+s}=T_tT_s,
\]
which preserves positivity and constants.  Rotational invariance means that
$T_t(\varphi\circ Q)=(T_t\varphi)\circ Q$ for every $Q\in O(n)$.  No general
semigroup theory beyond these properties and the spectral assumption below
is needed.

\begin{corollary}[Zonal semigroup contraction]\label{cor:zonal-semigroup}
Let $(T_t)_{t\ge0}$ be a rotationally invariant Markov semigroup on
$\mathbb S^{n-1}$.  Assume that for each $t>0$ it has a continuous
nonincreasing zonal kernel $k_t(d(\xi,\eta))$, and suppose that
\[
 T_tY=e^{-\gamma_1t}Y
 \qquad
 \text{for every }Y\in\mathcal H_1(\mathbb S^{n-1}).
\]
Then for every $t>0$ and every measurable
$F:\mathbb S^{n-1}\to\overline{\mathbb B^n}$,
\[
 w\!\left(
 \operatorname{co}T_tF(\mathbb S^{n-1})
 \right)
 \le
 2e^{-\gamma_1t}.
\]
Moreover, for $1\le j\le n$,
\[
 V_j\!\left(
 \operatorname{co}T_tF(\mathbb S^{n-1})
 \right)
 \le
 e^{-j\gamma_1t}V_j(\overline{\mathbb B^n}).
\]
If $k_t$ is strictly decreasing and equality holds in the mean-width
estimate, then
\[
 F(\eta)=Q\eta
 \qquad\text{a.e.}
\]
for some $Q\in O(n)$.
\end{corollary}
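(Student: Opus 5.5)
The plan is to reduce Corollary~\ref{cor:zonal-semigroup} to Theorems~\ref{thm:zonal-main} and \ref{thm:zonal-rigidity}, applied with the single kernel $k=k_t$ for each fixed $t>0$. By hypothesis $T_t$ acts on scalar functions by
\[
 T_t\varphi(\xi)=\int_{\mathbb S^{n-1}}k_t(d(\xi,\eta))\varphi(\eta)\,d\sigma(\eta),
\]
and $k_t$ is continuous and nonincreasing. Applying this coordinatewise to $F$ gives $T_tF=T_{k_t}F$. Theorem~\ref{thm:zonal-main} then yields
\[
 w\bigl(\operatorname{co}T_tF(\mathbb S^{n-1})\bigr)\le2\lambda_1(k_t).
\]

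The next step identifies $\lambda_1(k_t)$. Lemma~\ref{lem:degree-one-multiplier} says $T_{k_t}Y=\lambda_1(k_t)Y$ for every $Y\in\mathcal H_1$. The hypothesis says $T_tY=e^{-\gamma_1t}Y$. Take a nonzero $Y$, for instance $Y(\eta)=\eta_n$. Comparing the two identities gives $\lambda_1(k_t)=e^{-\gamma_1t}$, which is the mean-width bound.

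If the kernel is only given with respect to an unnormalized measure, I would first rescale so that it is written against $\sigma$. The normalization does not matter, because the eigenvalue identification absorbs it. The semigroup law and the Markov property are not needed beyond ensuring that the kernel representation makes sense.

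For the intrinsic volumes, set $K=\operatorname{co}T_tF(\mathbb S^{n-1})$. This set is compact and convex because $k_t$ is continuous and $F$ is bounded, as noted before the proof of Theorem~\ref{thm:zonal-main}. The extended Urysohn inequality \eqref{eq:extended-Urysohn} gives
\[
 V_j(K)\le\Bigl(\frac{w(K)}2\Bigr)^jV_j(\overline{\mathbb B^n}).
\]
By the first part, $0\le w(K)/2\le e^{-\gamma_1t}$. Raising to the $j$th power preserves the inequality because both sides are nonnegative, so $V_j(K)\le e^{-j\gamma_1t}V_j(\overline{\mathbb B^n})$.

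For rigidity, suppose $k_t$ is strictly decreasing and the mean-width bound holds with equality. Then $w(K)=2\lambda_1(k_t)$, and Theorem~\ref{thm:zonal-rigidity} gives $F(\eta)=Q\eta$ almost everywhere for some $Q\in O(n)$.

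I expect no real obstacle. The only point needing care is the identification $\lambda_1(k_t)=e^{-\gamma_1t}$, which requires that the kernel representation and the multiplier hypothesis refer to the same operator with the same measure normalization. The lower-dimensional case of \eqref{eq:extended-Urysohn} is already covered in the preliminaries.
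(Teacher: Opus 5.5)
Your proposal is correct and follows the paper's proof: you identify $\lambda_1(k_t)=e^{-\gamma_1 t}$ by comparing Lemma~\ref{lem:degree-one-multiplier} with the spectral hypothesis, apply Theorems~\ref{thm:zonal-main} and \ref{thm:zonal-rigidity}, and obtain the intrinsic-volume bounds from \eqref{eq:extended-Urysohn}. Your observation that the semigroup law and the Markov property play no role also matches the paper's argument.
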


\begin{proof}
The degree-one multiplier is
$\lambda_1(k_t)=e^{-\gamma_1t}$.  Apply
Theorems~\ref{thm:zonal-main} and \ref{thm:zonal-rigidity}; the
intrinsic-volume estimate follows from
\eqref{eq:extended-Urysohn}.
\end{proof}

\subsection{Necessity of radial monotonicity}

Theorem~\ref{thm:zonal-main} cannot be extended to arbitrary positive
rotationally invariant Markov kernels using only their degree-one
multiplier.

\begin{proposition}[Failure without radial monotonicity]
\label{prop:nonmonotone}
For every $n\ge2$ there exists a continuous nonnegative zonal Markov kernel
$k$ whose degree-one multiplier is zero, and a measurable
$F:\mathbb S^{n-1}\to\overline{\mathbb B^n}$, such that
\[
 w\!\left(
 \operatorname{co}T_kF(\mathbb S^{n-1})
 \right)>0.
\]
\end{proposition}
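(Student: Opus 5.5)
The plan is to use a kernel that is even about $t=\pi/2$, so that it has no degree-one component, but that does have a nonzero degree-two component. Boundary data built from a degree-two harmonic then has nonconstant image. Concretely, for $0<\varepsilon\le1$ I would take
\[
 k(t)=1+\varepsilon\,(n\cos^2t-1),\qquad 0\le t\le\pi ,
\]
and
\[
 F(\eta)=(\eta_1\eta_2,0,\dots,0).
\]
Since $|\eta_1\eta_2|\le\tfrac12$, $F$ maps into $\overline{\mathbb B^n}$; it is continuous, hence measurable, and this works for every $n\ge2$. The kernel $k$ is continuous. It is nonnegative because $n\cos^2t-1\ge-1$ and $\varepsilon\le1$.

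First I check the kernel properties. The Markov normalization follows from $\int\langle\xi,\eta\rangle^2\,d\sigma(\eta)=1/n$, which gives $\int k(d(\xi,\eta))\,d\sigma(\eta)=1$. Next, $\lambda_1(k)=0$ by \eqref{eq:intro-lambda1}. Under $t\mapsto\pi-t$, both $k(t)$ and $\sin^{n-2}t$ are invariant while $\cos t$ changes sign, so the integrand $k(t)\cos t\sin^{n-2}t$ is odd about $\pi/2$ and integrates to zero. (For $n=2$ this is $k=1+\varepsilon\cos2t$.) Note that $k$ is not monotone, consistent with Theorem~\ref{thm:zonal-main}.

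Second, I compute $T_kF$. Writing $Y(\eta)=\eta_1\eta_2$, the constant part of $k$ contributes $\int Y\,d\sigma=0$, and the term $-\varepsilon\int Y\,d\sigma$ also vanishes. It remains to evaluate $\varepsilon n\int\langle\xi,\eta\rangle^2\eta_1\eta_2\,d\sigma(\eta)$. Expanding $\langle\xi,\eta\rangle^2=\sum_{i,j}\xi_i\xi_j\eta_i\eta_j$ and using the symmetries of $\sigma$ under coordinate sign changes, only the terms with $\{i,j\}=\{1,2\}$ survive. This gives $2\xi_1\xi_2\int\eta_1^2\eta_2^2\,d\sigma=2\xi_1\xi_2/(n(n+2))$, using the standard moment $\int\eta_1^2\eta_2^2\,d\sigma=1/(n(n+2))$. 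Hence
\[
 T_kF(\xi)=\Bigl(\tfrac{2\varepsilon}{n+2}\,\xi_1\xi_2,0,\dots,0\Bigr).
\]
This is also the Funk--Hecke statement that $Y$ is an eigenfunction with a nonzero degree-two multiplier.

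Finally, as $\xi$ ranges over the sphere, $\xi_1\xi_2$ takes all values in $[-\tfrac12,\tfrac12]$. Therefore $\operatorname{co}T_kF(\mathbb S^{n-1})$ is the segment $[-m,m]e_1$ with $m=\varepsilon/(n+2)>0$. Its support function is $m|u_1|$, so
\[
 w=2m\int_{\mathbb S^{n-1}}|u_1|\,d\sigma(u)>0,
\]
while $2\lambda_1(k)=0$. The only computational step is the fourth-moment identity, which is routine (for example via Gaussian integration); I expect no real obstacle beyond it.
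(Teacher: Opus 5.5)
Your proposal is correct and is essentially the paper's argument. The paper uses the same kernel family, $1+\varepsilon(\cos^2t-1/n)$, which is yours with $\varepsilon$ rescaled by $n$, together with degree-two data $F=(\eta_1^2-1/n)e_1/\|\eta_1^2-1/n\|_\infty$. It then gets non-constancy without computing any moment: $T_kF(e_1)$ is a positive multiple of $\int(\eta_1^2-1/n)^2\,d\sigma\,e_1\neq0$, while $\int T_kF\,d\sigma=0$. You instead take $\eta_1\eta_2$ and compute the image segment exactly via the fourth moment $\int\eta_1^2\eta_2^2\,d\sigma=1/(n(n+2))$.
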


\begin{proof}
Put
\[
 q(t)=\cos^2t-\frac1n
\]
and, for $0<\varepsilon\le1$, set
\[
 k_\varepsilon(t)=1+\varepsilon q(t).
\]
Since $q(t)\ge-1/n$, the kernel is nonnegative.  Moreover,
\[
 \int_{\mathbb S^{n-1}}
 \left(
 \langle\xi,\eta\rangle^2-\frac1n
 \right)d\sigma(\eta)=0,
\]
so $k_\varepsilon$ is Markov.  Its degree-one multiplier is zero because
\[
 \int_{\mathbb S^{n-1}}
 \left(
 \langle\xi,\eta\rangle^2-\frac1n
 \right)\eta\,d\sigma(\eta)=0
\]
by oddness, and the constant part also annihilates degree-one functions.

Let
\[
 Y(\eta)=\eta_1^2-\frac1n,
 \qquad
 F(\eta)=\frac{Y(\eta)}{\|Y\|_\infty}e_1.
\]
Then $|F|\le1$, while at $\xi=e_1$,
\[
 T_{k_\varepsilon}F(e_1)
 =
 \frac{\varepsilon}{\|Y\|_\infty}
 \left(
 \int_{\mathbb S^{n-1}}Y(\eta)^2\,d\sigma(\eta)
 \right)e_1
 \ne0.
\]
On the other hand, Fubini's theorem and the symmetry of the zonal kernel
give
\begin{align*}
 \int_{\mathbb S^{n-1}}T_{k_\varepsilon}F(\xi)\,d\sigma(\xi)
 &=
 \int_{\mathbb S^{n-1}}F(\eta)
 \left(
   \int_{\mathbb S^{n-1}}
   k_\varepsilon(d(\xi,\eta))\,d\sigma(\xi)
 \right)d\sigma(\eta)\\
 &=\int_{\mathbb S^{n-1}}F(\eta)\,d\sigma(\eta)=0,
\end{align*}
where the inner integral equals $1$ by the Markov normalization.
Hence $T_{k_\varepsilon}F$ is not constant, so its convex hull is
nontrivial and has positive mean width.  Since the degree-one multiplier
is zero, no estimate of the form
$w\le2\lambda_1$ can hold for this kernel.
\end{proof}

\section{The Poisson specialization}

For $0<\rho<1$, the Poisson kernel of $\mathbb B^n$ with respect to
normalized spherical measure is
\[
 P_\rho(\xi,\eta)
 =
 p_\rho(d(\xi,\eta)),
\]
where
\begin{equation}\label{eq:prho}
 p_\rho(t)
 =
 \frac{1-\rho^2}
 {(1-2\rho\cos t+\rho^2)^{n/2}}.
\end{equation}
It is continuous and strictly decreasing on $(0,\pi)$.

\begin{corollary}[Poisson mean-width contraction and rigidity]
\label{cor:PoissonWidth}
Let
$F:\mathbb S^{n-1}\to\overline{\mathbb B^n}$ be measurable and define
\[
 G_\rho(\xi)
 =
 \int_{\mathbb S^{n-1}}
 P_\rho(\xi,\eta)F(\eta)\,d\sigma(\eta),
 \qquad
 K_\rho=\operatorname{co}G_\rho(\mathbb S^{n-1}).
\]
Then
\[
w(K_\rho)\le2\rho.
\]
Equality holds if and only if
\[
 F(\eta)=Q\eta
 \qquad\text{a.e.}
\]
for some $Q\in O(n)$.
\end{corollary}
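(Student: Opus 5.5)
The plan is to obtain Corollary~\ref{cor:PoissonWidth} as the specialization $k=p_\rho$ of Theorems~\ref{thm:zonal-main} and \ref{thm:zonal-rigidity}. Since $G_\rho=T_{p_\rho}F$ by definition, it suffices to check three things: that $p_\rho$ satisfies the hypotheses of those theorems, that it is strictly decreasing, and that $\lambda_1(p_\rho)=\rho$.

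The first two points are elementary. The function $p_\rho$ is continuous on $[0,\pi]$ because $1-2\rho\cos t+\rho^2\ge(1-\rho)^2>0$. The denominator $(1-2\rho\cos t+\rho^2)^{n/2}$ is strictly increasing in $t$ on $[0,\pi]$, since $\cos t$ is strictly decreasing there and $\rho>0$. Hence $p_\rho$ is strictly decreasing on the whole closed interval, which is what Theorem~\ref{thm:zonal-rigidity} requires.

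For the multiplier, I will avoid the explicit integral \eqref{eq:intro-lambda1} and instead use Lemma~\ref{lem:degree-one-multiplier} together with Poisson reproduction. Fix $a\in\mathbb R^n$. The function $x\mapsto\langle a,x\rangle$ is harmonic on a neighbourhood of $\overline{\mathbb B^n}$, so the Poisson integral reproduces it:
\[
 \int_{\mathbb S^{n-1}}P_\rho(\xi,\eta)\langle a,\eta\rangle\,d\sigma(\eta)=\langle a,\rho\xi\rangle=\rho\,Y_a(\xi).
\]
Here we use that the normalized Poisson kernel at $x=\rho\xi$ is $(1-\rho^2)/|\rho\xi-\eta|^n=p_\rho(d(\xi,\eta))$. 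Comparing this with $T_{p_\rho}Y_a=\lambda_1(p_\rho)Y_a$ for some $a\ne0$ gives $\lambda_1(p_\rho)=\rho$, which is \eqref{eq:intro-euclidean-first-multiplier}.

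Theorem~\ref{thm:zonal-main} now gives $w(K_\rho)\le2\rho$. Theorem~\ref{thm:zonal-rigidity} gives equality if and only if $F(\eta)=Q\eta$ a.e.\ for some $Q\in O(n)$. For the converse direction one can also check directly that $G_\rho(\xi)=\rho Q\xi$, so $K_\rho=\rho Q\overline{\mathbb B^n}$, which has mean width $2\rho$.

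No step here is a genuine obstacle. The only point needing care is that the normalizations agree: the kernel in \eqref{eq:prho} must be the Poisson kernel with respect to the \emph{normalized} measure $\sigma$, so that reproduction holds with constant $1$. The standard formula in \cite{AxlerBourdonRamey}, written against normalized surface measure, confirms this.
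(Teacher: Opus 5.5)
Your proposal is correct and is essentially the paper's own proof. The paper likewise identifies $\lambda_1(p_\rho)=\rho$ by Poisson reproduction of a linear harmonic function (it uses $x\mapsto x_n$ at $\xi=e_n$), notes that $p_\rho$ is strictly decreasing, and then applies Theorems~\ref{thm:zonal-main} and~\ref{thm:zonal-rigidity}; your explicit checks of continuity, strict monotonicity, and the normalization of the kernel are the details it leaves implicit.
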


\begin{proof}
The degree-one multiplier of the Poisson operator is $\rho$.  Indeed, for
the coordinate function $\eta\mapsto\eta_n$,
\[
 \int_{\mathbb S^{n-1}}
 P_\rho(e_n,\eta)\eta_n\,d\sigma(\eta)
 =
 \rho
\]
by Poisson reproduction of the harmonic function $x\mapsto x_n$.  Hence
\[
 \lambda_1(p_\rho)=\rho.
\]
The result follows from Theorems~\ref{thm:zonal-main} and
\ref{thm:zonal-rigidity}.
\end{proof}

\section{Harmonic self-maps and one-radius rigidity}

We now pass from boundary data to arbitrary harmonic self-maps.

\begin{proof}[Proof of Theorem~\ref{thm:main}]
Fix $0<r<R<1$ and put
\[
 \rho=\frac rR,\qquad
 F_R(\eta)=f(R\eta),\qquad \eta\in\mathbb S^{n-1}.
\]
Since $f(\mathbb B^n)\subset\mathbb B^n$,
\[
 |F_R(\eta)|<1.
\]
Define $g(x)=f(Rx)$.  The map $g$ is harmonic in the ball of radius $1/R>1$,
so the classical Poisson representation on the unit sphere applies without
any appeal to boundary-limit theory.  For $\xi\in\mathbb S^{n-1}$,
\[
 f(r\xi)
 =
 g(\rho\xi)
 =
 \int_{\mathbb S^{n-1}}
 P_\rho(\xi,\eta)F_R(\eta)\,d\sigma(\eta).
\]
Corollary~\ref{cor:PoissonWidth} therefore gives
\[
 w\!\left(\operatorname{co}f(r\mathbb S^{n-1})\right)
 \le
 2\frac rR.
\]
This holds for every $R\in(r,1)$.  Letting $R\uparrow1$ yields
\[
 w\!\left(\operatorname{co}f(r\mathbb S^{n-1})\right)\le2r.
\]
Sharpness follows from $f(x)=Qx$, $Q\in O(n)$.
\end{proof}

For rigidity, we represent $f$ using a single boundary datum, so that
equality at $r_0$ can be tested directly with the kernel $P_{r_0}$.
We include the standard weak-$*$ argument; compare
\cite[Theorem~6.13]{AxlerBourdonRamey}.

\begin{lemma}[Bounded harmonic boundary representation]
\label{lem:boundary-representation}
Let
\[
 f:\mathbb B^n\to\mathbb R^n
\]
be harmonic and bounded by $1$ in norm.  Then there exists
\[
 F\in L^\infty(\mathbb S^{n-1};\mathbb R^n),
 \qquad |F|\le1\ \text{a.e.},
\]
such that
\begin{equation}\label{eq:boundary-representation}
 f(r\xi)
 =
 \int_{\mathbb S^{n-1}}
 P_r(\xi,\eta)F(\eta)\,d\sigma(\eta),
 \qquad 0\le r<1.
\end{equation}
\end{lemma}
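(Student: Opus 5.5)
The plan is the classical weak-$*$ compactness argument applied to dilates of $f$. For $0<R<1$, put $F_R(\eta)=f(R\eta)$. Then $F_R$ is continuous on $\mathbb S^{n-1}$ and $|F_R|\le1$. As in the proof of Theorem~\ref{thm:main}, the map $x\mapsto f(Rx)$ is harmonic on a neighbourhood of $\overline{\mathbb B^n}$, so the classical Poisson formula gives
\[
 f(R\rho\xi)=\int_{\mathbb S^{n-1}}P_\rho(\xi,\eta)F_R(\eta)\,d\sigma(\eta),
 \qquad 0\le\rho<1,\ \xi\in\mathbb S^{n-1}.
\]

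\textbf{Step 1: extract a weak-$*$ limit.} Choose $R_j\uparrow1$. Each coordinate of $F_{R_j}$ lies in the unit ball of $L^\infty(\mathbb S^{n-1},\sigma)=L^1(\mathbb S^{n-1},\sigma)^*$. Since $L^1$ of the sphere is separable, this ball is weak-$*$ sequentially compact (Banach--Alaoglu). Passing to a subsequence, we get $F_{R_j}\to F$ weak-$*$ coordinatewise for some $F\in L^\infty(\mathbb S^{n-1};\mathbb R^n)$.

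\textbf{Step 2: pass to the limit in the Poisson formula.} Fix $r\in[0,1)$ and $\xi$. For $j$ large we have $r<R_j$, so we may write $f(r\xi)=\int P_{r/R_j}(\xi,\eta)F_{R_j}(\eta)\,d\sigma(\eta)$. Split the difference from $\int P_r(\xi,\eta)F(\eta)\,d\sigma$ into two pieces:
\[
 \int\bigl(P_{r/R_j}-P_r\bigr)(\xi,\eta)F_{R_j}\,d\sigma
 \quad\text{and}\quad
 \int P_r(\xi,\eta)\bigl(F_{R_j}-F\bigr)\,d\sigma .
\]
The first piece is at most $\sup_\eta|P_{r/R_j}(\xi,\eta)-P_r(\xi,\eta)|$. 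This tends to $0$ because $p_\rho(t)$ is jointly continuous in $(\rho,t)$ on $[0,r']\times[0,\pi]$ for any $r'<1$, and $r/R_j\to r$. The second piece tends to $0$ by weak-$*$ convergence, since $P_r(\xi,\cdot)\in L^1$. Together these give \eqref{eq:boundary-representation}.

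\textbf{Step 3: show $|F|\le1$ a.e.} This is the only point needing care, because weak-$*$ limits control linear functionals rather than the pointwise norm. For each fixed $u\in\mathbb S^{n-1}$, the scalar functions $\langle F_{R_j},u\rangle\le1$ converge weak-$*$ to $\langle F,u\rangle$. Testing against indicators of the sets $\{\langle F,u\rangle>1\}$ shows $\langle F,u\rangle\le1$ a.e. Apply this to a countable dense set of directions $u$ and take the supremum over them. This gives $|F(\eta)|=\sup_u\langle F(\eta),u\rangle\le1$ for a.e. $\eta$.

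The only real obstacle is the norm bound in Step 3, which the countable-dense-directions argument handles. If one prefers, one can instead use the weak-$*$ closedness of the convex set $\{G:|G|\le1\text{ a.e.}\}$.
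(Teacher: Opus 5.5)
Your proof is correct and complete. It differs from the paper's mainly in how the norm bound $|F|\le1$ is obtained.

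The paper's proof is shorter but relies on more theory. It applies the Axler--Bourdon--Ramey representation theorem for bounded harmonic functions componentwise. It then gets $|F|\le1$ a.e.\ by identifying $F$ with the almost-everywhere radial boundary limits of $f$, a Fatou-type theorem, so that $|F(\eta)|=\lim_{r\uparrow1}|f(r\eta)|\le1$.

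You instead carry out directly the dilation and weak-$*$ compactness argument on which that representation theorem rests. You then get the Euclidean norm bound with no boundary-limit theory: you test against indicators of $\{\langle F,u\rangle>1\}$ over a countable dense set of directions. This is equivalent to weak-$*$ closedness of the unit ball of $L^\infty(\mathbb S^{n-1};\mathbb R^n)$ with the pointwise Euclidean norm. You are right that this is the only delicate step, since componentwise bounds $|F_i|\le1$ alone would give only $|F|\le\sqrt n$.

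What each route buys:
\begin{itemize}
\item Yours is self-contained. It needs only Banach--Alaoglu on the separable space $L^1(\sigma)$ and the uniform continuity of $p_\rho(t)$ on $[0,r']\times[0,\pi]$. It also fits the paper's own stated preference, in the proof of Theorem~\ref{thm:main}, for avoiding boundary-limit theory.
\item The paper's additionally identifies $F$ concretely as the radial boundary function of $f$, which is more than the lemma requires.
\end{itemize}
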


\begin{proof}
This is the vector-valued form of the standard boundary representation for
bounded harmonic functions; see \cite[Theorem~6.13]{AxlerBourdonRamey}.
Applying that result componentwise and taking the a.e. radial boundary
limits gives a boundary function $F$ satisfying $|F|\le1$ a.e.
\end{proof}
\begin{proof}[Proof of Theorem~\ref{thm:harmonic-rigidity}]
Let $F$ be the boundary datum given by
Lemma~\ref{lem:boundary-representation}.  At the radius $r_0$,
\[
 f(r_0\xi)
 =
 \int_{\mathbb S^{n-1}}
 P_{r_0}(\xi,\eta)F(\eta)\,d\sigma(\eta).
\]
Hence the hypothesis
\[
 w\!\left(\operatorname{co}f(r_0\mathbb S^{n-1})\right)=2r_0
\]
is exactly the equality case in Corollary~\ref{cor:PoissonWidth}.
Its rigidity statement therefore gives
\[
 F(\eta)=Q\eta
 \qquad\text{for a.e. }\eta\in\mathbb S^{n-1}
\]
for some $Q\in O(n)$.  Using
\eqref{eq:boundary-representation} again and Poisson reproduction,
\[
 f(r\xi)
 =
 Q\int_{\mathbb S^{n-1}}P_r(\xi,\eta)\eta\,d\sigma(\eta)
 =
 rQ\xi,
 \qquad 0\le r<1.
\]
Thus $f(x)=Qx$ on $\mathbb B^n$.  The converse is immediate.
\end{proof}
\begin{lemma}[Harmonic convex-hull property]\label{lem:convexhull}
For every harmonic $f:\mathbb B^n\to\mathbb R^n$ and every $0<r<1$,
\[
 f(r\mathbb B^n)
 \subset
 \operatorname{co}f(r\mathbb S^{n-1}).
\]
\end{lemma}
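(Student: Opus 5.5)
The plan is to argue by supporting hyperplanes and the maximum principle for scalar harmonic functions. Set $K=\operatorname{co}f(r\mathbb S^{n-1})$. Since $f$ is continuous on the compact sphere $r\mathbb S^{n-1}$, the image $f(r\mathbb S^{n-1})$ is compact, and hence $K$ is a compact convex set (convex hull of a compact set in $\mathbb R^n$ is compact by Carath\'eodory). A compact convex set is the intersection of its supporting half-spaces:
\[
 K=\bigcap_{u\in\mathbb S^{n-1}}\{y\in\mathbb R^n:\langle y,u\rangle\le h_K(u)\}.
\]
So it suffices to show that, for each fixed $u\in\mathbb S^{n-1}$ and each $x\in r\mathbb B^n$, one has $\langle f(x),u\rangle\le h_K(u)$.

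For fixed $u$, the scalar function $\varphi_u(x)=\langle f(x),u\rangle$ is a linear combination of the coordinates of $f$, hence harmonic on $\mathbb B^n$. It is continuous on the closed ball $r\overline{\mathbb B^n}\subset\mathbb B^n$, so the weak maximum principle gives
\[
 \sup_{|x|\le r}\varphi_u(x)=\max_{|x|=r}\varphi_u(x)\le \sup_{y\in f(r\mathbb S^{n-1})}\langle y,u\rangle=h_K(u),
\]
the last equality because the support function of a convex hull equals that of the generating set. Therefore $f(x)$ lies in every supporting half-space of $K$, i.e.\ $f(x)\in K$, for every $x$ with $|x|<r$ (indeed $|x|\le r$). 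This proves the inclusion.

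Alternatively, one could use the mean-value/Poisson representation: for $|x|<r$, $f(x)=\int_{\mathbb S^{n-1}}P_{|x|/r}(x/|x|,\eta)f(r\eta)\,d\sigma(\eta)$ is an average of points of $f(r\mathbb S^{n-1})$ against a probability measure, and averages of a compact set lie in its closed convex hull, which equals $K$. I would present the maximum-principle version since it avoids any discussion of barycenters. There is no real obstacle; the only point requiring care is the separation step, i.e.\ that a point outside the compact convex $K$ is strictly separated by some direction $u$. This is the standard Hahn--Banach separation in $\mathbb R^n$, and it uses the compactness of $K$ noted at the start.
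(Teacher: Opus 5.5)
Your proof is correct, but it is not the paper's main route; the paper's argument is the Poisson alternative you sketch at the end. It applies the Poisson formula on the ball $r\mathbb B^n$ to write $f(x)=\int_{\mathbb S^{n-1}}P_{r,x}(\eta)f(r\eta)\,d\sigma(\eta)$, with a nonnegative kernel of total mass one. It then concludes that $f(x)$ is a convex average of points of $f(r\mathbb S^{n-1})$. This tacitly uses your barycenter remark: an integral average lies in the closed convex hull, which equals $K$ because $K$ is compact.

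Your argument (supporting half-spaces plus the weak maximum principle for the scalar functions $\langle f,u\rangle$) is essentially the one the paper uses later for the hyperbolic-harmonic analogue, Lemma~\ref{hyp:lem:convex-hull}. There it is needed because $\Delta_h$ is not dilation invariant, so a rescaled Poisson--Szeg\H{o} formula on $r\mathbb B^n$ is not directly available. Your route needs only a maximum principle for the scalar projections, so it transfers to any linear elliptic equation with that property. The Poisson route instead exhibits $f(x)$ explicitly as a barycenter of the spherical trace, which matches the kernel-based viewpoint of the Euclidean sections.
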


\begin{proof}
Fix $x\in r\mathbb B^n$.  Applying the Poisson formula in the ball
$r\mathbb B^n$ to each coordinate of $f$ writes $f(x)$ as
\[
 f(x)
 =
 \int_{\mathbb S^{n-1}}
 P_{r,x}(\eta)f(r\eta)\,d\sigma(\eta),
\]
where $P_{r,x}(\eta)\ge0$ and
\[
 \int_{\mathbb S^{n-1}}P_{r,x}(\eta)\,d\sigma(\eta)=1.
\]
Thus $f(x)$ is a convex average of points of $f(r\mathbb S^{n-1})$.
\end{proof}

\begin{proof}[Proof of Corollary~\ref{cor:volume}]
Let
\[
 K_r=\operatorname{co}f(r\mathbb S^{n-1}).
\]
By Lemma~\ref{lem:convexhull},
\[
 f(r\mathbb B^n)\subset K_r.
\]
We also record that $f(r\mathbb B^n)$ is Lebesgue measurable.  Indeed, for
any sufficiently large integer $j_0$,
\[
 r\mathbb B^n
 =
 \bigcup_{j\ge j_0}\overline{B}_{\,r-1/j},
\]
and continuity of $f$ gives
\[
 f(r\mathbb B^n)
 =
 \bigcup_{j\ge j_0}
 f\!\left(\overline{B}_{\,r-1/j}\right).
\]
Each set on the right is compact, so $f(r\mathbb B^n)$ is
$\sigma$-compact and hence Borel measurable.  Therefore
\[
 |f(r\mathbb B^n)|\le|K_r|.
\]
If $K_r$ has empty interior, then $|K_r|=0$ and there is nothing to prove.
Otherwise Urysohn's inequality \eqref{eq:urysohn} and
Theorem~\ref{thm:main} imply
\[
 \left(\frac{|K_r|}{\omega_n}\right)^{1/n}
 \le
 \frac{w(K_r)}2
 \le r.
\]
Therefore
\[
 |f(r\mathbb B^n)|
 \le|K_r|
 \le\omega_n r^n.
\]
For $f(x)=Qx$, $Q\in O(n)$, equality holds for every $r$.
\end{proof}

\begin{proof}[Proof of Corollary~\ref{cor:planar}]
Identify $\mathbb B^2$ with $\mathbb D$ and $\mathbb S^1$ with
$\mathbb T$.  For every compact planar convex set $K$, Cauchy's perimeter
formula and the normalization \eqref{eq:meanwidth} give
\[
 \operatorname{Per}(K)
 =
 \int_0^{2\pi}h_K(\phi)\,d\phi
 =
 \pi w(K),
\]
with the same identity by continuous extension for degenerate convex sets.
Theorem~\ref{thm:main} therefore gives
\[
 \operatorname{Per}\!\left(\operatorname{co}f(r\mathbb T)\right)
 \le2\pi r.
\]
The area estimate is the case $n=2$ of
Corollary~\ref{cor:volume}; equivalently, it follows by the planar
isoperimetric inequality.  Rotations and reflections of the disk give
equality.
\end{proof}

\section{Consequences and geometric applications}

Standard inequalities of convex geometry turn the mean-width bound into
estimates for intrinsic volumes and parallel bodies.

\subsection{The full intrinsic-volume hierarchy}

\begin{proof}[Proof of Corollary~\ref{cor:intrinsic}]
Let
\[
 K_r=\operatorname{co}f(r\mathbb S^{n-1}).
\]
By \eqref{eq:extended-Urysohn}, \eqref{eq:V1-meanwidth}, and
Theorem~\ref{thm:main},
\[
 \left(
 \frac{V_j(K_r)}{V_j(\overline{\mathbb B^n})}
 \right)^{1/j}
 \le
 \frac{w(K_r)}2
 \le r.
\]
Hence
\[
 V_j(K_r)
 \le
 r^jV_j(\overline{\mathbb B^n})
 =
 V_j(r\overline{\mathbb B^n}),
 \qquad j=1,\ldots,n.
\]
For $f(x)=Qx$ with $Q\in O(n)$, one has
$K_r=r\overline{\mathbb B^n}$, so equality holds simultaneously for every
$j$.
\end{proof}

\begin{corollary}[Rigidity of intrinsic-volume contraction]
\label{cor:intrinsic-rigidity}
Let $K_r=\operatorname{co}f(r\mathbb S^{n-1})$.  If for some
$r\in(0,1)$ and some $j\in\{1,\ldots,n\}$,
\[
 V_j(K_r)=V_j(r\overline{\mathbb B^n}),
\]
then
\[
 f(x)=Qx
\]
for some $Q\in O(n)$.
\end{corollary}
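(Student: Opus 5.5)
The plan is to reduce the equality $V_j(K_r)=V_j(r\overline{\mathbb B^n})$ to equality in the mean-width bound, and then invoke Theorem~\ref{thm:harmonic-rigidity}. By \eqref{eq:extended-Urysohn}, \eqref{eq:V1-meanwidth} and Theorem~\ref{thm:main},
\[
 r=\left(\frac{V_j(K_r)}{V_j(\overline{\mathbb B^n})}\right)^{1/j}
 \le\frac{w(K_r)}2\le r .
\]
So the assumed equality forces $w(K_r)=2r$. Theorem~\ref{thm:harmonic-rigidity} at $r_0=r$ then gives $f(x)=Qx$ for some $Q\in O(n)$. The case $j=1$ is immediate, since $V_1$ is proportional to $w$.

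The only point needing care is that we must not rely on the equality cases of Alexandrov--Fenchel, which are delicate. We do not need them. The sandwich above already gives $w(K_r)=2r$ from the inequality alone, because the outer bounds coincide at $r$.

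Next, the degenerate case. If $K_r$ has empty interior, then $V_n(K_r)=0<\omega_nr^n$, so equality with $j=n$ cannot occur. For $j<n$, inequality \eqref{eq:extended-Urysohn} was already stated to hold for lower-dimensional compact convex sets by Hausdorff approximation, so the sandwich argument applies unchanged.

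Finally, I would check that $V_j(\overline{\mathbb B^n})>0$ for $1\le j\le n$, which follows from \eqref{eq:intrinsic-ball}. This makes the normalized ratio well defined.

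I expect no real obstacle. The step to double-check is only that \eqref{eq:extended-Urysohn} holds for all compact convex $K$, including degenerate ones. Once that is confirmed, the corollary follows directly from Theorem~\ref{thm:harmonic-rigidity}.
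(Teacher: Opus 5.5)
Your proposal is correct and is essentially the paper's proof: the chain $r=\bigl(V_j(K_r)/V_j(\overline{\mathbb B^n})\bigr)^{1/j}\le w(K_r)/2\le r$, obtained from \eqref{eq:extended-Urysohn} and Theorem~\ref{thm:main}, forces $w(K_r)=2r$, and Theorem~\ref{thm:harmonic-rigidity} then gives $f(x)=Qx$. Your extra remarks (no Alexandrov--Fenchel equality cases are needed, and degenerate $K_r$ is covered by the approximation remark after \eqref{eq:extended-Urysohn}) are sound, and the paper's argument implicitly relies on the same points.
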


\begin{proof}
The extended isoperimetric inequality and
Theorem~\ref{thm:main} give
\[
 r
 =
 \left(
 \frac{V_j(K_r)}
 {V_j(\overline{\mathbb B^n})}
 \right)^{1/j}
 \le
 \frac{w(K_r)}2
 \le r.
\]
Thus $w(K_r)=2r$, and Theorem~\ref{thm:harmonic-rigidity} applies.
\end{proof}

\begin{corollary}[Rigidity of image-volume contraction]
\label{cor:volume-rigidity}
If for some $r\in(0,1)$,
\[
 |f(r\mathbb B^n)|=\omega_nr^n,
\]
then
\[
 f(x)=Qx
\]
for some $Q\in O(n)$.
\end{corollary}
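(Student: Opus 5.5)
The plan is to reduce volume equality to mean-width equality and then invoke Theorem~\ref{thm:harmonic-rigidity}.

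Put $K_r=\operatorname{co}f(r\mathbb S^{n-1})$. The proof of Corollary~\ref{cor:volume} gives the chain
\[
 \omega_n r^n=|f(r\mathbb B^n)|\le|K_r|\le\omega_n\Bigl(\frac{w(K_r)}2\Bigr)^n\le\omega_n r^n .
\]
It uses Lemma~\ref{lem:convexhull} for the first inequality, Urysohn's inequality \eqref{eq:urysohn} for the second, and Theorem~\ref{thm:main} for the third. Because $\omega_nr^n>0$, the set $K_r$ has positive volume, so it has nonempty interior and Urysohn's inequality applies in its classical form with no degenerate case left over.

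Equality at the two ends forces every inequality in the chain to be an equality. In particular $(w(K_r)/2)^n=r^n$, that is, $w(K_r)=2r$. Theorem~\ref{thm:harmonic-rigidity}, applied with $r_0=r$, then gives $f(x)=Qx$ for some $Q\in O(n)$.

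An alternative route is to observe that $|K_r|=V_n(K_r)=V_n(r\overline{\mathbb B^n})$ and apply Corollary~\ref{cor:intrinsic-rigidity} with $j=n$; this yields the same conclusion.

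There is essentially no obstacle. The only point to check is that the intermediate quantity $|K_r|$ is squeezed as well, and that the measurability of $f(r\mathbb B^n)$ established earlier justifies the first inequality. Neither the equality case of Urysohn's inequality nor the identification of $K_r$ as a ball is needed, since mean-width equality alone triggers the one-radius rigidity theorem.
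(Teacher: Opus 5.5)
Your proposal is correct and follows the paper's own proof: the paper squeezes the same chain $|f(r\mathbb B^n)|\le|K_r|\le\omega_n(w(K_r)/2)^n\le\omega_nr^n$ to get $w(K_r)=2r$ and then applies Theorem~\ref{thm:harmonic-rigidity}. Your alternative route through Corollary~\ref{cor:intrinsic-rigidity} with $j=n$ is also valid, since $|f(r\mathbb B^n)|\le|K_r|\le\omega_nr^n$ forces $V_n(K_r)=V_n(r\overline{\mathbb B^n})$.
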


\begin{proof}
With $K_r=\operatorname{co}f(r\mathbb S^{n-1})$,
\[
 |f(r\mathbb B^n)|
 \le
 |K_r|
 \le
 \omega_n\left(\frac{w(K_r)}2\right)^n
 \le
 \omega_nr^n.
\]
Equality at the two endpoints forces $w(K_r)=2r$, and
Theorem~\ref{thm:harmonic-rigidity} applies.
\end{proof}

The case $j=n$ recovers the convex-hull volume estimate.  In dimension two,
$j=1$ is equivalent to the sharp perimeter estimate through
$\operatorname{Per}(K)=\pi w(K)$.  In higher dimensions the intermediate
values of $j$ give genuinely additional geometric information.

In particular, if $f:\mathbb D\to\mathbb D$ is harmonic and equality holds
at some radius either in
\[
 \operatorname{Per}\!\left(\operatorname{co}f(r\mathbb T)\right)\le2\pi r
\]
or in
\[
 |f(r\mathbb D)|\le\pi r^2,
\]
then
\[
 f(z)=e^{i\theta}z
 \qquad\text{or}\qquad
 f(z)=e^{i\theta}\overline z
\]
for some $\theta\in\mathbb R$.

We record three immediate consequences without introducing additional formal
statements.  Let
\[
K_r=\operatorname{co}f(r\mathbb S^{n-1}).
\]

First, if $K_r$ has nonempty interior, then
\[
2V_{n-1}(K_r)=\mathcal H^{n-1}(\partial K_r).
\]
Hence the case $j=n-1$ of the intrinsic-volume contraction gives
\[
\mathcal H^{n-1}(\partial K_r)
\le
2r^{n-1}V_{n-1}(\overline{\mathbb B^n})
=
n\omega_n r^{n-1}.
\]

Second, Steiner's formula \eqref{eq:Steiner} gives
\[
|K_r+t\overline{\mathbb B^n}|
=
\sum_{j=0}^n
\omega_{n-j}V_j(K_r)t^{\,n-j}.
\]
Since
\[
V_j(K_r)
\le
V_j(r\overline{\mathbb B^n})
\qquad (0\le j\le n),
\]
where the case $j=0$ is equality, comparison of the coefficients yields
\[
\begin{aligned}
|K_r+t\overline{\mathbb B^n}|
&\le
\sum_{j=0}^n
\omega_{n-j}
V_j(r\overline{\mathbb B^n})t^{\,n-j} \\
&=
|r\overline{\mathbb B^n}
+t\overline{\mathbb B^n}| \\
&=
\omega_n(r+t)^n ,
\qquad t\ge0.
\end{aligned}
\]
Since
\[
f(r\mathbb B^n)\subset K_r,
\]
we also have
\[
|f(r\mathbb B^n)+t\overline{\mathbb B^n}|
\le
\omega_n(r+t)^n.
\]

Finally, let
\[
E=b+A(\mathbb B^n),
\qquad A\in GL(n,\mathbb R),
\]
and suppose that $f:\mathbb B^n\to E$ is harmonic.  Then
\[
g(x)=A^{-1}(f(x)-b)
\]
is a harmonic self-map of $\mathbb B^n$.  Applying the volume contraction
to $g$ gives
\[
|g(r\mathbb B^n)|\le\omega_n r^n,
\]
and therefore
\[
|f(r\mathbb B^n)|
=
|\det A|\,|g(r\mathbb B^n)|
\le
r^n|E|.
\]
All three estimates are sharp for orthogonal, respectively affine
orthogonal, extremals.
\begin{remark}
The argument is formulated for
$f:\mathbb B^n\to\mathbb B^n$ with $n\ge2$.  The spherical cap profile in
Theorem~\ref{thm:trimmed} is averaged over target directions
$u\in\mathbb S^{n-1}$, while the Poisson layer-cake decomposition produces
caps in the domain sphere $\mathbb S^{n-1}$.  The equality of these
dimensions is what makes
\[
 \Phi_n(A_n(t))
 =
 \frac{c_n}{n-1}\sin^{n-1}t
\]
fit exactly with the first spherical-harmonic reproduction identity.  We do
not claim here an analogous sharp theorem for maps
$\mathbb B^n\to\mathbb B^m$ when $m\ne n$.
\end{remark}

\section{Hyperbolic-harmonic self-maps}\label{hyp:sec:hyperbolic}

Theorems~\ref{thm:zonal-main} and \ref{thm:zonal-rigidity} reduce the
hyperbolic problem to identifying the first multiplier of the
Poisson--Szeg\H{o} kernel.  We compute that multiplier, derive the sharp
global bounds, and then use domain automorphisms to obtain differential
estimates at arbitrary points.

For later use, if $A:\mathbb R^n\to\mathbb R^n$ is linear, then
\begin{equation}\label{eq:hyp-ellipsoid-width}
 \frac12 w(A\overline{\mathbb B^n})
 =\int_{\mathbb S^{n-1}}|A^T\xi|\,d\sigma(\xi).
\end{equation}

\subsection{Boundary representation and hyperbolic automorphisms}

The kernel \eqref{eq:hyp-Ph} satisfies
\begin{equation}\label{hyp:eq:Ph-normalization}
 \int_{\Sph^{n-1}}\Ph(x,\zeta)\,d\sigma(\zeta)=1.
\end{equation}
For $F\in L^1(\Sph^{n-1},\R^m)$ its Poisson--Szeg\H{o} integral is
\begin{equation}\label{hyp:eq:Poisson-extension}
 P_h[F](x)=\int_{\Sph^{n-1}}\Ph(x,\zeta)F(\zeta)\,d\sigma(\zeta).
\end{equation}
Each component of $P_h[F]$ is hyperbolic harmonic.  Conversely, bounded
hyperbolic-harmonic functions admit essentially bounded boundary data;
see Stoll \cite[Theorem 7.1.1(c)]{Stoll2016} and Chen--Kalaj
\cite{ChenKalaj2021}.  Applying this representation componentwise and
taking almost-everywhere boundary limits preserves the bound on the
Euclidean norm.  Thus every hyperbolic-harmonic $u:\B^n\to\B^n$ has
the representation
\begin{equation}\label{hyp:eq:boundary-data}
 u=P_h[F]
 \quad\text{for some }F\in L^\infty(\Sph^{n-1},\R^n)
 \quad\text{with }|F(\zeta)|\le1\text{ a.e.}
\end{equation}

We shall also use that the equation $\Dh v=0$ is invariant under precomposition with hyperbolic automorphisms of the ball. If $\varphi$ is a ball automorphism with $\varphi(0)=a$, then its Euclidean derivative at the origin is conformal and
\begin{equation}\label{hyp:eq:automorphism-derivative}
 D\varphi(0)=(1-|a|^2)O
\end{equation}
for some $O\in O(n)$. These are standard facts for the Poincar\'e ball; see \cite{Stoll2016}.

\subsection{A hyperbolic-harmonic convex-hull maximum principle}

The operator \eqref{eq:hyp-laplacian-intro} is not dilation invariant.
We obtain the required convex-hull inclusion from its maximum principle
on compact subballs.

\begin{lemma}[Hyperbolic-harmonic convex-hull property]\label{hyp:lem:convex-hull}
Let $u:\B^n\to\R^n$ be hyperbolic harmonic. Then, for every $0<r<1$,
\begin{equation}\label{hyp:eq:convex-hull-property}
 u(r\B^n)\subset \co u(r\Sph^{n-1}).
\end{equation}
\end{lemma}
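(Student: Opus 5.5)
The plan is to prove the inclusion by duality with support functions, reducing it to a scalar maximum principle for $\Dh$ on the closed subball $r\overline{\B^n}$. Since $K=\co u(r\Sph^{n-1})$ is compact and convex (as the convex hull of a compact set), a point $y$ lies in $K$ if and only if $\langle y,v\rangle\le h_K(v)$ for every $v\in\Sph^{n-1}$. It therefore suffices to show that, for each fixed unit vector $v$, the scalar function $\phi_v(x)=\langle u(x),v\rangle$ satisfies
\[
 \sup_{|x|<r}\phi_v(x)\le\max_{|x|=r}\phi_v(x)=h_K(v).
\]
Because $\Dh$ acts componentwise and linearly, $\phi_v$ is a $C^2$ solution of $\Dh\phi_v=0$ in $\B^n$.

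The core step is a weak maximum principle for the operator $L=\Delta+2(n-2)(1-|x|^2)^{-1}x\cdot\nabla$ on $\overline{B_r}$. Here $\Dh=(1-|x|^2)^2L$, and $(1-|x|^2)^2>0$ on $\overline{B_r}$. On this closed ball $L$ is uniformly elliptic with bounded continuous first-order coefficients and no zeroth-order term.

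I will prove the maximum principle by the standard perturbation argument. Take $\psi_\varepsilon=\phi_v+\varepsilon e^{\gamma x_1}$ with $\gamma$ large. Then
\[
 L e^{\gamma x_1}=\bigl(\gamma^2+2(n-2)(1-|x|^2)^{-1}\gamma x_1\bigr)e^{\gamma x_1}>0 \quad\text{on }\overline{B_r},
\]
once $\gamma>2(n-2)r/(1-r^2)$. Hence $L\psi_\varepsilon>0$ in $B_r$. At an interior maximum we would have $\nabla\psi_\varepsilon=0$ and $\Delta\psi_\varepsilon\le0$, so $L\psi_\varepsilon\le0$, which is a contradiction. The maximum of $\psi_\varepsilon$ over $\overline{B_r}$ is therefore attained on $r\Sph^{n-1}$. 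Letting $\varepsilon\downarrow0$ gives the displayed inequality.

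There is also an alternative route, which I would mention as a check: the Poisson--Szeg\H{o} representation on subballs. It does not transfer directly, because $\Dh$ is not dilation invariant. The maximum-principle route avoids this issue, and it is the one I will use.

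Combining the two steps, for each $x\in r\B^n$ and each $v\in\Sph^{n-1}$ we get $\langle u(x),v\rangle\le h_K(v)$, and therefore $u(x)\in K$. I expect no serious obstacle. The only point needing care is checking that the first-order coefficient stays bounded on $\overline{B_r}$, so that the barrier constant $\gamma$ can be chosen uniformly. This holds because $r<1$.
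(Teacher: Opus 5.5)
Your proposal is correct and follows the paper's proof: both reduce the inclusion, via support functions (equivalently, a separating hyperplane), to the weak maximum principle for $\Dh$, which is uniformly elliptic on the closed subball $r\overline{\B^n}$, applied to $\langle u(x),v\rangle$. The only difference is that you prove that maximum principle directly with the barrier $\varepsilon e^{\gamma x_1}$, $\gamma>2(n-2)r/(1-r^2)$, whereas the paper simply cites it.
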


\begin{proof}
Put $K=\co u(r\Sph^{n-1})$. If $u(x_0)\notin K$ for some $|x_0|<r$, the separating-hyperplane theorem gives a vector $v\in\Sph^{n-1}$ such that
\[
 \langle u(x_0),v\rangle>
 \sup_{|x|=r}\langle u(x),v\rangle.
\]
But $x\mapsto\langle u(x),v\rangle$ satisfies $\Dh f=0$. On the closed ball of radius $r$, the operator is uniformly elliptic, so the maximum principle gives a contradiction.
\end{proof}

\subsection{The Poisson--Szeg\H{o} kernel and its first spherical mode}
For $0<r<1$ and $\theta\in[0,\pi]$, put
\begin{equation}\label{hyp:eq:prh-merged}
 p_r^h(\theta)=\left(\frac{1-r^2}{1-2r\cos\theta+r^2}\right)^{n-1}.
\end{equation}
Then
\[
 (p_r^h)'(\theta)
 =-\frac{2(n-1)r(1-r^2)^{n-1}\sin\theta}
 {(1-2r\cos\theta+r^2)^n}<0,
 \qquad 0<\theta<\pi.
\]
Thus $p_r^h$ is a strictly decreasing zonal kernel and Theorems~\ref{thm:zonal-main} and \ref{thm:zonal-rigidity} apply. Its degree-one multiplier is exactly $\Lambda_n(r)$ from \eqref{eq:hyp-Lambda-intro}.

\begin{proposition}[Explicit profile]\label{hyp:prop:Lambda}
For $n\ge2$ and $0\le r<1$,
\begin{equation}\label{hyp:eq:Lambda-explicit}
 \Lambda_n(r)=\frac{2(n-1)}{n}\,r\,
 {}_2F_1\!\left(1,1-\frac n2;\frac{n+2}{2};r^2\right).
\end{equation}
Moreover,
\begin{equation}\label{hyp:eq:Lambda-endpoints}
 \Lambda_n(0)=0,
 \qquad
 \lim_{r\uparrow1}\Lambda_n(r)=1,
 \qquad
 0<\Lambda_n(r)<1\quad(0<r<1),
\end{equation}
and
\begin{equation}\label{hyp:eq:Lambda-expansion}
 \Lambda_n(r)
 =\frac{2(n-1)}{n}r
 \left(1-\frac{n-2}{n+2}r^2+O(r^4)\right)
 \qquad(r\downarrow0).
\end{equation}
\end{proposition}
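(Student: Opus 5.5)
The plan is to identify $\Lambda_n$ through the differential equation it satisfies, not by evaluating the integral directly. Let $U(x)=P_h[\zeta\mapsto\zeta](x)$ be the Poisson--Szeg\H{o} extension of the identity boundary map. By rotational covariance of $P_h$ (the argument of Lemma~\ref{lem:degree-one-multiplier}) together with the second expression in \eqref{eq:hyp-Lambda-intro}, we have $U(x)=\Lambda_n(|x|)\,x/|x|$ for $x\neq0$ and $U(0)=0$. Each component of $U$ is hyperbolic harmonic and real-analytic in $\B^n$. Hence $g(r):=\Lambda_n(r)$ is smooth on $[0,1)$, odd in $r$, and satisfies $g(0)=0$.

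\emph{Radial ODE.} For a map of the form $g(r)\,x/r$, the standard computations give $\Delta\bigl(g(r)x_j/r\bigr)=\bigl(g''+\tfrac{n-1}{r}g'-\tfrac{n-1}{r^2}g\bigr)x_j/r$ and $x\cdot\nabla\bigl(g(r)x_j/r\bigr)=r g'(r)\,x_j/r$. By \eqref{eq:hyp-laplacian-intro}, $\Delta_hU=0$ is therefore equivalent to
\[
 (1-r^2)\Bigl(g''+\frac{n-1}{r}g'-\frac{n-1}{r^2}g\Bigr)+2(n-2)\,r\,g'=0 .
\]
Substitute $g(r)=r\,h(r^2)$ and $z=r^2$. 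After dividing by $4(1-z)$ and collecting terms, the equation should become the Gauss equation
\[
 z(1-z)h''+\Bigl[\tfrac{n+2}{2}-\bigl(\tfrac{n+2}{2}-(n-2)\bigr)z\Bigr]h'-\tfrac{n-2}{2}\cdot(\ldots)\,h=0 .
\]
I expect this to reduce to $c=\tfrac{n+2}{2}$, $a+b+1=\tfrac{n+2}{2}-(n-2)\cdot\tfrac12\cdot2/2$ suitably adjusted, and $ab=1-\tfrac n2$, which is consistent with $a=1$ and $b=1-\tfrac n2$. This bookkeeping is the step I regard as the main obstacle: the coefficient matching must be checked carefully, and the cleanest check is to compare the first two Taylor coefficients of $h$ produced by the ODE recursion. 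The second Frobenius exponent at $z=0$ is $1-c=-n/2$, which corresponds to $g\sim r^{1-n}$ and is singular. Smoothness of $U$ at the origin therefore forces $h=h(0)\,{}_2F_1\bigl(1,1-\tfrac n2;\tfrac{n+2}{2};z\bigr)$.

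\emph{Normalization.} Differentiating under the integral at $r=0$ gives $\partial_r p_r^h(t)|_{r=0}=2(n-1)\cos t$. Hence
\[
 h(0)=\Lambda_n'(0)=2(n-1)\int_{\Sph^{n-1}}\zeta_n^2\,d\sigma=\frac{2(n-1)}{n},
\]
which proves \eqref{hyp:eq:Lambda-explicit}. The expansion \eqref{hyp:eq:Lambda-expansion} then follows from the first series coefficient $ab/c=(1-\tfrac n2)\big/\tfrac{n+2}{2}=-\tfrac{n-2}{n+2}$. The value $\Lambda_n(0)=0$ is immediate.

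\emph{Endpoint limit and bounds.} Since $P_h(x,\cdot)$ is a normalized approximate identity by \eqref{hyp:eq:Ph-normalization} and concentrates at $e_n$ as $x=re_n\to e_n$, and $\zeta_n$ is continuous, we get $\Lambda_n(r)\to1$. The upper bound comes from $\Lambda_n(r)=\int P_h(re_n,\zeta)\zeta_n\,d\sigma<\int P_h(re_n,\zeta)\,d\sigma=1$, because $\zeta_n<1$ off a null set and $P_h>0$. For the lower bound, \eqref{eq:zonal-before-ibp}--\eqref{eq:zonal-lambda} give $\Lambda_n(r)=\int_{(0,\pi)}\frac{c_n}{n-1}\sin^{n-1}t\,d\nu_{p_r^h}(t)$. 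The measure $\nu_{p_r^h}$ has full support because $p_r^h$ is strictly decreasing, and the integrand is positive on $(0,\pi)$, so $\Lambda_n(r)>0$ for $0<r<1$.
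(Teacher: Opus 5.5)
Your proof is correct. Its backbone is the same as the paper's: the radial ODE for the degree-one mode of $\Delta_h$, the substitution $g(r)=r\,h(r^2)$, and selecting the solution regular at $z=0$.

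The real difference is how you fix the constant.
\begin{itemize}
\item The paper normalizes at the boundary. It combines $\lim_{r\uparrow1}\Lambda_n(r)=1$ with Gauss's summation ${}_2F_1(1,1-\frac n2;\frac{n+2}{2};1)=\frac{n}{2(n-1)}$, which requires $c-a-b=n-1>0$ and continuity of the series up to $z=1$.
\item You normalize at the origin, through $h(0)=\Lambda_n'(0)=2(n-1)\int_{\Sph^{n-1}}\zeta_n^2\,d\sigma=\frac{2(n-1)}{n}$.
\end{itemize}
Your route is purely local and avoids the Gauss evaluation. It also directly produces the value $\Lambda_n'(0)$ that the paper uses later. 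Since you prove $\Lambda_n(r)\to1$ independently, your argument even recovers Gauss's value as a byproduct.

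Two smaller differences:
\begin{itemize}
\item You make explicit why the second solution is discarded: its exponent is $1-c=-\frac n2$. For even $n$ the exponents differ by an integer, but the second solution still has leading term $z^{-n/2}$, so boundedness of $h=g/r$ near $0$ suffices. The paper leaves this step implicit.
\item For positivity you use the Stieltjes layer-cake formula from the proof of Theorem~\ref{thm:zonal-main}, with $\nu_{p_r^h}$ of full support. The paper instead pairs $t$ with $\pi-t$. Both arguments work.
\end{itemize}

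The computation you flagged as the main obstacle does close, though not as you wrote it: you must divide by $4r$, not by $4(1-z)$. Using $g'=h+2zh'$ and $g''+\frac{n-1}{r}g'-\frac{n-1}{r^2}g=4r^3h''+(2n+4)rh'$, you get
\[
 z(1-z)h''+\Bigl(\frac{n+2}{2}+\frac{n-6}{2}z\Bigr)h'+\frac{n-2}{2}\,h=0 .
\]
Hence $c=\frac{n+2}{2}$, $a+b+1=\frac{6-n}{2}$ and $ab=1-\frac n2$, i.e.\ $a=1$, $b=1-\frac n2$, exactly as you anticipated. Note that the coefficient of $h$ is $+\frac{n-2}{2}$, which is the sign consistent with your stated value of $ab$.
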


\begin{proof}
Consider the vector-valued Poisson extension of the identity boundary map,
\[
 V(x)=P_h[\id](x)=\int_{\Sph^{n-1}}\Ph(x,\eta)\eta\,d\sigma(\eta).
\]
Rotational covariance gives
\begin{equation}\label{hyp:eq:V-radial}
 V(r\xi)=R(r)\xi
\end{equation}
for a scalar function $R$, and by \eqref{eq:hyp-Lambda-intro}, $R=\Lambda_n$. The boundary data are continuous, so $R(r)\to1$ as $r\uparrow1$.

Let $Y(\xi)$ be a spherical harmonic of degree one. Substituting $R(r)Y(\xi)$ into $\Dh(RY)=0$ gives
\begin{equation}\label{hyp:eq:radial-ode}
 (1-r^2)\left(R''+\frac{n-1}{r}R'-\frac{n-1}{r^2}R\right)
 +2(n-2)rR'=0.
\end{equation}
Write $R(r)=r y(r^2)$ and set $s=r^2$. Then \eqref{hyp:eq:radial-ode} becomes
\begin{equation}\label{hyp:eq:hypergeom-ode}
 s(1-s)y''+
 \left(\frac{n+2}{2}+\frac{n-6}{2}s\right)y'
 +\frac{n-2}{2}y=0.
\end{equation}
This is Gauss' hypergeometric equation with
\[
 a=1,\qquad b=1-\frac n2,\qquad c=\frac{n+2}{2}.
\]
The solution regular at $s=0$ is therefore
\[
 y(s)=C\,{}_2F_1\!\left(1,1-\frac n2;\frac{n+2}{2};s\right).
\]
Gauss' value at $1$ gives
\begin{align*}
 {}_2F_1\!\left(1,1-\frac n2;\frac{n+2}{2};1\right)
 &=\frac{\Gamma((n+2)/2)\Gamma(n-1)}
 {\Gamma(n/2)\Gamma(n)}\\
 &=\frac{n}{2(n-1)}.
\end{align*}
The boundary normalization $\lim_{r\uparrow1}R(r)=1$ therefore forces $C=2(n-1)/n$, proving \eqref{hyp:eq:Lambda-explicit}.

The endpoint statements follow from the Poisson representation. Positivity can also be seen directly from
\[
 \Lambda_n(r)
 =c_n\int_0^{\pi/2}
 \bigl(p_r^h(t)-p_r^h(\pi-t)\bigr)
 \cos t\,\sin^{n-2}t\,dt>0,
\]
since $p_r^h$ is strictly decreasing. Strict inequality $\Lambda_n(r)<1$ follows because $V(r\xi)$ is a nontrivial positive average of points of $\Sph^{n-1}$. Finally, the Taylor series for ${}_2F_1$ gives \eqref{hyp:eq:Lambda-expansion}.
\end{proof}

\begin{remark}[Special dimensions]\label{hyp:rem:special-dimensions}
For $n=2$, \eqref{hyp:eq:Lambda-explicit} gives $\Lambda_2(r)=r$, as expected. For even dimensions the hypergeometric series terminates. For example,
\begin{align}\label{hyp:eq:special-lambda}
 \Lambda_4(r)&=\frac{3r-r^3}{2},\\
 \Lambda_6(r)&=\frac53 r\left(1-\frac12r^2+\frac1{10}r^4\right).\notag
\end{align}
Thus in dimension four the sharp volume profile is
\[
 |u(r\B^4)|\le\omega_4\left(\frac{3r-r^3}{2}\right)^4.
\]
\end{remark}

\subsection{Sharp hyperbolic mean-width, intrinsic-volume, and volume distortion}
\begin{proof}[Proof of Theorem~\ref{thm:hyp-main-intro}]
Because $u(\mathbb B^n)\subset\mathbb B^n$, the map is bounded.  By the boundary representation above there exists $F:\mathbb S^{n-1}\to\overline{\mathbb B^n}$ such that $u=P_h[F]$.  For $\xi\in\mathbb S^{n-1}$,
\[
 u(r\xi)=\int_{\mathbb S^{n-1}}P_h(r\xi,\eta)F(\eta)\,d\sigma(\eta).
\]
Since the kernel is strictly decreasing and has first multiplier $\Lambda_n(r)$, Theorem~\ref{thm:zonal-main} gives
\[
 w\!\left(\operatorname{co}u(r\mathbb S^{n-1})\right)\le2\Lambda_n(r).
\]
Let $K_r=\operatorname{co}u(r\mathbb S^{n-1})$.  By the hyperbolic-harmonic convex-hull property, $u(r\mathbb B^n)\subset K_r$.  Hence Urysohn's inequality yields
\[
 |u(r\mathbb B^n)|\le |K_r|
 \le\omega_n\left(\frac{w(K_r)}2\right)^n
 \le\omega_n\Lambda_n(r)^n.
\]
For $Q\in O(n)$, the map $U_Q=P_h[Q\,\mathrm{id}]$ satisfies
\[
 U_Q(r\xi)=\Lambda_n(r)Q\xi,
\]
so equality holds in the mean-width estimate.  Since $\Lambda_n$ is continuous, $\Lambda_n(0)=0$, and $\Lambda_n(r)>0$, the intermediate value theorem shows that $U_Q(r\mathbb B^n)$ contains $\Lambda_n(r)\mathbb B^n$; the upper bound therefore gives equality in volume as well.

Finally, equality in image volume forces equality throughout the chain
of inequalities above, and therefore equality in mean width.
Theorem~\ref{thm:zonal-rigidity} then gives $F(\eta)=Q\eta$ almost
everywhere.  Hence $u=U_Q$ throughout $\mathbb B^n$ in either equality case.
\end{proof}

\begin{corollary}[Hyperbolic intrinsic-volume hierarchy]\label{hyp:cor:hyp-intrinsic}
Let $K_r=\operatorname{co}u(r\mathbb S^{n-1})$. Then, for $1\le j\le n$,
\[
 V_j(K_r)\le \Lambda_n(r)^j V_j(\overline{\mathbb B^n}).
\]
Each inequality is sharp.  Equality for one $j$ at one radius forces $u=U_Q$ for some $Q\in O(n)$.
\end{corollary}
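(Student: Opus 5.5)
The plan is to copy the proof of Corollary~\ref{cor:intrinsic} and Corollary~\ref{cor:intrinsic-rigidity}, using the hyperbolic mean-width bound in place of Theorem~\ref{thm:main}. Fix $0<r<1$ and put $K_r=\operatorname{co}u(r\mathbb S^{n-1})$, a compact convex set because $u$ is continuous. Theorem~\ref{thm:hyp-main-intro} gives $w(K_r)\le 2\Lambda_n(r)$. Combining this with the extended Urysohn inequalities \eqref{eq:extended-Urysohn} and \eqref{eq:V1-meanwidth} yields, for $1\le j\le n$,
\[
 \left(\frac{V_j(K_r)}{V_j(\overline{\mathbb B^n})}\right)^{1/j}
 \le\frac{w(K_r)}2\le\Lambda_n(r).
\]
Raising to the power $j$ gives the stated bound. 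If $K_r$ has empty interior, \eqref{eq:extended-Urysohn} still holds by the Hausdorff approximation remark in the preliminaries.

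For sharpness I will take $U_Q=P_h[Q\,\id]$. By Lemma~\ref{lem:degree-one-multiplier}, $U_Q(r\xi)=\Lambda_n(r)Q\xi$. Since $\Lambda_n(r)>0$ by Proposition~\ref{hyp:prop:Lambda}, we get $K_r=\Lambda_n(r)\overline{\mathbb B^n}$. Homogeneity of degree $j$ of $V_j$ then gives equality for every $j$ at every radius.

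For rigidity, suppose $V_j(K_r)=\Lambda_n(r)^jV_j(\overline{\mathbb B^n})$ for some $j$ and some $r$. The displayed chain collapses, so $w(K_r)=2\Lambda_n(r)$. Next I write $u=P_h[F]$ with $|F|\le1$ a.e., using \eqref{hyp:eq:boundary-data}. Then $u(r\xi)=T_{p^h_r}F(\xi)$, where $p_r^h$ is strictly decreasing as shown after \eqref{hyp:eq:prh-merged}. Theorem~\ref{thm:zonal-rigidity} therefore forces $F(\eta)=Q\eta$ a.e., and hence $u=P_h[F]=U_Q$ on all of $\mathbb B^n$.

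The only point needing care is the dimension restriction. Theorem~\ref{thm:hyp-main-intro} is stated for $n\ge3$, but the mean-width step above works for every $n\ge2$, since Theorem~\ref{thm:zonal-main} applies directly to the kernel $p_r^h$. For $n=2$ one also has $\Lambda_2(r)=r$, so the statement reduces to Corollary~\ref{cor:intrinsic}. Apart from this, I expect no real obstacle: the argument is a direct transfer of the Euclidean proof.
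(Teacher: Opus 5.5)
Your proof is correct and follows essentially the same route as the paper. You combine \eqref{eq:extended-Urysohn} with the hyperbolic mean-width bound, use $U_Q$ for sharpness, and observe that equality collapses the chain to $w(K_r)=2\Lambda_n(r)$. The one difference is in rigidity: you apply Theorem~\ref{thm:zonal-rigidity} directly to the representation $u=P_h[F]$, rather than citing the equality clause of Theorem~\ref{thm:hyp-main-intro}, but that clause is proved exactly this way, and your observation that the argument also covers $n=2$ is correct.
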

\begin{proof}
Combine the extended isoperimetric inequality \eqref{eq:extended-Urysohn} with Theorem~\ref{thm:hyp-main-intro}.  Equality in any intrinsic-volume estimate forces equality in mean width and hence the rigidity conclusion.
\end{proof}

\begin{corollary}[Failure of concentric Euclidean volume contraction]\label{hyp:cor:hyp-no-contraction}
If $n\ge3$, there exist hyperbolic-harmonic self-maps $u:\mathbb B^n\to\mathbb B^n$ and arbitrarily small $r>0$ such that
\[
 |u(r\mathbb B^n)|>|r\mathbb B^n|=\omega_nr^n.
\]
\end{corollary}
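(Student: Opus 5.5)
We take the extremal maps $U_Q=P_h[Q|_{\Sph^{n-1}}]$, $Q\in O(n)$ (for definiteness $Q=I$), and show that the lower bound $|U_Q(r\B^n)|\ge\omega_n\Lambda_n(r)^n$ beats $\omega_nr^n$ for small $r$. The key input is the expansion \eqref{hyp:eq:Lambda-expansion} from Proposition~\ref{hyp:prop:Lambda}:
\[
 \Lambda_n(r)=\frac{2(n-1)}{n}\,r\bigl(1+O(r^2)\bigr),\qquad r\downarrow0.
\]
For $n\ge3$ the leading coefficient satisfies $2(n-1)/n-1=(n-2)/n>0$. Hence there is $r_*\in(0,1)$ such that $\Lambda_n(r)>r$ for all $0<r<r_*$, and so we get arbitrarily small radii.

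First we check that $U_Q$ is an admissible self-map. Its components are hyperbolic harmonic, being Poisson--Szeg\H{o} integrals. Moreover $U_Q(x)=\Lambda_n(|x|)Q\,x/|x|$ with $0\le\Lambda_n<1$ on $[0,1)$ by \eqref{hyp:eq:Lambda-endpoints}, so $U_Q(\B^n)\subset\B^n$.

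Next we verify the image volume. Fix $0<r<r_*$. As already noted in the proof of Theorem~\ref{thm:hyp-main-intro}, $\Lambda_n$ is continuous with $\Lambda_n(0)=0$. For any $y$ with $|y|<\Lambda_n(r)$ the intermediate value theorem gives $\rho\in[0,r)$ with $\Lambda_n(\rho)=|y|$. Then $y=U_Q(\rho\,Q^{-1}y/|y|)$ when $y\neq0$, and $y=U_Q(0)$ when $y=0$. Hence $\Lambda_n(r)\B^n\subset U_Q(r\B^n)$, and therefore
\[
 |U_Q(r\B^n)|\ge\omega_n\Lambda_n(r)^n>\omega_n r^n .
\]

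The only mildly delicate point is the sign of the second-order correction. It is irrelevant, since the first-order coefficient $2(n-1)/n$ already strictly exceeds $1$. This is exactly why the construction needs $n\ge3$; for $n=2$ we have $\Lambda_2(r)=r$ and no such map exists, by Corollary~\ref{cor:volume}.
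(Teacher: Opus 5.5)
Your proof is correct and matches the paper's argument: the paper also takes the extremal $U_Q$ and combines $|U_Q(r\mathbb B^n)|=\omega_n\Lambda_n(r)^n$ with $\Lambda_n(r)/r\to 2(n-1)/n>1$. The only difference is that you prove just the needed lower bound, via the containment $\Lambda_n(r)\mathbb B^n\subset U_Q(r\mathbb B^n)$, instead of citing the full equality from Theorem~\ref{thm:hyp-main-intro}; that containment is the same intermediate-value step the paper uses there.
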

\begin{proof}
For $U_Q$, equality gives $|U_Q(r\mathbb B^n)|=\omega_n\Lambda_n(r)^n$, while the expansion of $\Lambda_n$ at the origin gives $\Lambda_n(r)/r\to2(n-1)/n>1$.
\end{proof}

For $a\in\B^n$, choose a hyperbolic automorphism $\varphi_a$ with $\varphi_a(0)=a$ and define the pseudohyperbolic ball
\begin{equation}\label{hyp:eq:pseudohyp-ball}
 B_h(a,r):=\varphi_a(r\B^n),\qquad 0<r<1.
\end{equation}
The set does not depend on the particular choice of $\varphi_a$ because two such choices differ by an orthogonal map fixing the origin.

\begin{corollary}[Sharp bounds on pseudohyperbolic balls]\label{hyp:cor:pseudohyp}
Let $u:\B^n\to\B^n$ be hyperbolic harmonic. Then for every $a\in\B^n$ and $0<r<1$,
\begin{align}
 w\!\left(\co u(\partial B_h(a,r))\right)&\le2\Lambda_n(r),\label{hyp:eq:pseudo-width}\\
 |u(B_h(a,r))|&\le\omega_n\Lambda_n(r)^n.\label{hyp:eq:pseudo-volume}
\end{align}
Both estimates are sharp.
\end{corollary}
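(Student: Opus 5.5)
The plan is to reduce the corollary to Theorem~\ref{thm:hyp-main-intro} by precomposing with a hyperbolic automorphism. Fix $a\in\B^n$ and an automorphism $\varphi_a$ with $\varphi_a(0)=a$, and set $v=u\circ\varphi_a$. Since $\Dh$ is invariant under precomposition with ball automorphisms, $v$ is again hyperbolic harmonic. Because $\varphi_a$ maps $\B^n$ onto itself, $v$ also maps $\B^n$ into $\B^n$. Thus $v$ satisfies the hypotheses of Theorem~\ref{thm:hyp-main-intro}. For $n=2$ one would instead use Theorem~\ref{thm:main} together with $\Lambda_2(r)=r$, since there $\Dh$-harmonicity is ordinary harmonicity.

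Next I would identify the image sets. The automorphism $\varphi_a$ is a homeomorphism of $\overline{\B^n}$, in fact a Möbius transformation. It therefore maps the sphere $r\Sph^{n-1}$ onto the topological boundary of $\varphi_a(r\B^n)=B_h(a,r)$. This gives $u(\partial B_h(a,r))=v(r\Sph^{n-1})$ and $u(B_h(a,r))=v(r\B^n)$. Theorem~\ref{thm:hyp-main-intro} applied to $v$ then yields both \eqref{hyp:eq:pseudo-width} and \eqref{hyp:eq:pseudo-volume} directly. These bounds do not depend on the choice of $\varphi_a$, which is consistent with the remark that $B_h(a,r)$ is well defined.

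For sharpness I would take $u=U_Q\circ\varphi_a^{-1}$. This map is hyperbolic harmonic by the same invariance. It maps $\B^n$ into $\B^n$, because $|U_Q(x)|=\Lambda_n(|x|)<1$ by Proposition~\ref{hyp:prop:Lambda}. With this choice $u\circ\varphi_a=U_Q$. The equality cases established for $U_Q$ in the proof of Theorem~\ref{thm:hyp-main-intro}, namely $U_Q(r\Sph^{n-1})=\Lambda_n(r)Q\Sph^{n-1}$ and $|U_Q(r\B^n)|=\omega_n\Lambda_n(r)^n$, then transfer verbatim.

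The only step that needs care is the invariance of the equation $\Dh v=0$ under $\varphi_a$. The paper records this as a standard fact with a citation to Stoll, so I would invoke it rather than reprove it. The remainder is bookkeeping. One point to check is that the set denoted $\partial B_h(a,r)$ is exactly $\varphi_a(r\Sph^{n-1})$, which holds because $\varphi_a$ is a homeomorphism of the closed ball. As a byproduct, rigidity also transfers: equality at $(a,r)$ forces $u=U_Q\circ\varphi_a^{-1}$ for some $Q\in O(n)$.
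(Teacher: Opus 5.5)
Your proposal is correct and is essentially the paper's proof: apply Theorem~\ref{thm:hyp-main-intro} to the hyperbolic-harmonic self-map $v=u\circ\varphi_a$, and obtain sharpness from $U_Q\circ\varphi_a^{-1}$. Your separate treatment of $n=2$ is a small but useful addition, since Theorem~\ref{thm:hyp-main-intro} is stated only for $n\ge3$: use Theorem~\ref{thm:main} and Corollary~\ref{cor:volume} with $\Lambda_2(r)=r$.
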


\begin{proof}
The mapping $v=u\circ\varphi_a$ is hyperbolic harmonic and maps $\B^n$ into itself. Apply \cref{thm:hyp-main-intro} to $v$. Sharpness is obtained from $u=U_Q\circ\varphi_a^{-1}$.
\end{proof}

The limit as $r\downarrow0$ gives a bound on the derivative ellipsoid,
just as in the Euclidean consequence of Theorem~\ref{thm:main}.

\begin{proposition}[Mean width of the derivative ellipsoid at the origin]\label{hyp:prop:origin-derivative}
For every hyperbolic-harmonic $u:\B^n\to\B^n$,
\begin{equation}\label{hyp:eq:origin-width}
 w\!\left(Du(0)\overline{\B^n}\right)
 \le \frac{4(n-1)}n.
\end{equation}
Equivalently,
\begin{equation}\label{hyp:eq:origin-average}
 \int_{\Sph^{n-1}}|Du(0)^T\xi|\,d\sigma(\xi)
 \le\frac{2(n-1)}n.
\end{equation}
\end{proposition}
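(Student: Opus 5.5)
The plan is to pass to the limit $r\downarrow0$ in the mean-width bound of Theorem~\ref{thm:hyp-main-intro}. For $n=2$ hyperbolic harmonicity coincides with ordinary harmonicity and $\Lambda_2(r)=r$, so the same argument works with Theorem~\ref{thm:main} in place of Theorem~\ref{thm:hyp-main-intro}. Alternatively, the zonal estimate of Theorem~\ref{thm:zonal-main} applies directly for every $n\ge2$, since $u=P_h[F]$ and $p_r^h$ is strictly decreasing. The two stated forms are equivalent by \eqref{eq:hyp-ellipsoid-width} with $A=Du(0)$, so it suffices to prove \eqref{hyp:eq:origin-average}.

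First, fix $r\in(0,1)$ and set $K_r=\co u(r\Sph^{n-1})$. The zonal theorem gives
\[
 \int_{\Sph^{n-1}}h_{K_r}(\xi)\,d\sigma(\xi)\le\Lambda_n(r).
\]
Translating a convex set adds a linear function to its support function, and a linear function integrates to zero over the sphere. Hence the same bound holds for $K_r-u(0)$. Now divide by $r$. Since $u$ is $C^1$ (indeed real-analytic, by elliptic regularity),
\[
 \frac{u(r\eta)-u(0)}{r}\longrightarrow Du(0)\eta
\]
uniformly in $\eta\in\Sph^{n-1}$. For each $\xi$ this gives
\[
 \frac1r h_{K_r-u(0)}(\xi)=\sup_{\eta}\Bigl\langle \frac{u(r\eta)-u(0)}{r},\xi\Bigr\rangle\longrightarrow\sup_\eta\langle Du(0)\eta,\xi\rangle=|Du(0)^T\xi|,
\]
and the convergence is uniform in $\xi$. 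Passing to the limit under the integral therefore yields
\[
 \int_{\Sph^{n-1}}|Du(0)^T\xi|\,d\sigma(\xi)\le\lim_{r\downarrow0}\frac{\Lambda_n(r)}{r}=\frac{2(n-1)}{n},
\]
where the last equality uses the expansion \eqref{hyp:eq:Lambda-expansion}.

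The main technical point is the uniform convergence of the rescaled support functions, and the supremum-of-uniform-limits argument above handles it directly.

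For sharpness, which the statement does not require but which is natural to record, take $U_Q(x)=\Lambda_n(|x|)Qx/|x|$. It is smooth at $0$, since it equals $P_h[Q\,\id]$, and $DU_Q(0)=\Lambda_n'(0)Q=\tfrac{2(n-1)}{n}Q$. This gives equality in both \eqref{hyp:eq:origin-width} and \eqref{hyp:eq:origin-average}.
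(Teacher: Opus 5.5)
Your proof is correct and follows the paper's argument: both pass to the limit $r\downarrow0$ in the bound $w(\co u(r\Sph^{n-1}))\le 2\Lambda_n(r)$, using translation invariance, uniform convergence of $(u(r\eta)-u(0))/r$ to $Du(0)\eta$, and $\Lambda_n(r)/r\to 2(n-1)/n$. You phrase the limit through uniform convergence of support functions, where the paper uses Hausdorff convergence and continuity of mean width. Your observation that for $n=2$ one should invoke Theorem~\ref{thm:zonal-main} (or Theorem~\ref{thm:main}) directly, because Theorem~\ref{thm:hyp-main-intro} is stated only for $n\ge3$, fixes a point the paper leaves implicit.
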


\begin{proof}
Let
\[
 K_r=\co u(r\Sph^{n-1}),
 \qquad
 L_r=\frac{K_r-u(0)}r.
\]
Differentiability at the origin gives, uniformly for $\xi\in\Sph^{n-1}$,
\[
 \frac{u(r\xi)-u(0)}r\longrightarrow Du(0)\xi.
\]
Hence $L_r$ converges in the Hausdorff metric to
\[
 \co\bigl(Du(0)\Sph^{n-1}\bigr)=Du(0)\overline{\B^n}.
\]
Mean width is continuous under Hausdorff convergence and translation invariant, so \cref{thm:hyp-main-intro} gives
\[
 w\!\left(Du(0)\overline{\B^n}\right)
 =\lim_{r\downarrow0}\frac{w(K_r)}r
 \le 2\lim_{r\downarrow0}\frac{\Lambda_n(r)}r
 =\frac{4(n-1)}n.
\]
Equation \eqref{hyp:eq:origin-average} follows from \eqref{eq:hyp-ellipsoid-width}.
\end{proof}

\begin{proof}[Proof of Theorem~\ref{thm:hyp-differential-intro}]
Choose a hyperbolic automorphism $\varphi$ with $\varphi(0)=x$ and put $v=u\circ\varphi$. By \eqref{hyp:eq:automorphism-derivative},
\[
 Dv(0)=Du(x)D\varphi(0)
 =(1-|x|^2)Du(x)O
\]
for some $O\in O(n)$. Since $O\overline{\B^n}=\overline{\B^n}$, \cref{hyp:prop:origin-derivative} yields
\[
 (1-|x|^2)w\!\left(Du(x)\overline{\B^n}\right)
 \le \frac{4(n-1)}n.
\]
Using \eqref{eq:hyp-ellipsoid-width}, we obtain
\begin{equation}\label{hyp:eq:average-derivative}
 \int_{\Sph^{n-1}}|Du(x)^T\xi|\,d\sigma(\xi)
 \le\frac{2(n-1)}{n(1-|x|^2)}.
\end{equation}

If $Du(x)$ is singular, \eqref{hyp:eq:jacobian} is immediate. Otherwise Urysohn's inequality applied to the derivative ellipsoid gives
\begin{align*}
 |\det Du(x)|^{1/n}
 &=\left(\frac{|Du(x)\B^n|}{\omega_n}\right)^{1/n}\\
 &\le \frac12w\!\left(Du(x)\overline{\B^n}\right)\\
 &\le\frac{2(n-1)}{n(1-|x|^2)}.
\end{align*}
Consequently,
\begin{equation}\label{hyp:eq:jacobian}
 |\det Du(x)|\le
 \left(\frac{2(n-1)}{n(1-|x|^2)}\right)^n.
\end{equation}

For sharpness at a prescribed $x$, choose $\varphi$ with $\varphi(0)=x$ and take
\[
 u=U_Q\circ\varphi^{-1}.
\]
Since $DU_Q(0)=\frac{2(n-1)}nQ$, the derivative $Du(x)$ is a scalar multiple of an orthogonal map with scalar
\[
 \frac{2(n-1)}{n(1-|x|^2)}.
\]
Thus equality holds in both estimates.
\end{proof}

\begin{remark}[Relation to pointwise Schwarz theory]
The estimates above control the mean width of the derivative ellipsoid
and the determinant. They are therefore different from sharp operator-norm bounds for $Du$ and from sharp pointwise estimates for $|u(x)|$. Hyperbolic-harmonic Schwarz lemmas of those types were developed by Burgeth \cite{Burgeth1992}, Chen--Kalaj \cite{ChenKalaj2021}, and Khalfallah--Haggui--Mateljevi\'c \cite{KhalfallahHagguiMateljevic2022}. For sharp Schwarz--Pick estimates for Euclidean harmonic maps that are conformal at a prescribed point, and for the corresponding extremal theory of conformal minimal discs, see Forstneri\v{c}--Kalaj \cite{ForstnericKalaj2024}. In particular, the present determinant constant should not be confused with the sharp constant for the operator norm of $Du(0)$.
\end{remark}
\begin{remark}[$\alpha$-harmonic interpolation]
The Euclidean and hyperbolic Poisson kernels considered above may be viewed
as two distinguished members of a common $\alpha$-harmonic family.  Namely,
for suitable $\alpha$ one considers
\[
 P_{\alpha}(x,\zeta)
 =
 C_{n,\alpha}
 \frac{(1-|x|^2)^{1-\alpha}}
 {|x-\zeta|^{\,n-\alpha}},
 \qquad
 x\in\mathbb B^n,\quad
 \zeta\in\mathbb S^{n-1},
\]
where
\[
 C_{n,\alpha}
 =
 \frac{
 \Gamma\!\left(\frac{n-\alpha}{2}\right)
 \Gamma\!\left(1-\frac{\alpha}{2}\right)}
 {\Gamma\!\left(\frac n2\right)\Gamma(1-\alpha)}.
\]
For $x=r\xi$ this is a zonal kernel,
\[
 p_{\alpha,r}(t)
 =
 C_{n,\alpha}
 \frac{(1-r^2)^{1-\alpha}}
 {(1-2r\cos t+r^2)^{(n-\alpha)/2}}.
\]
The choice $\alpha=0$ gives the ordinary Poisson kernel,
whereas $\alpha=2-n$ gives the Poisson--Szeg\H{o} kernel
\[
 \left(
 \frac{1-r^2}{1-2r\cos t+r^2}
 \right)^{n-1}.
\]
Thus the Euclidean and hyperbolic-harmonic results of this paper may be
regarded as two specializations of a single zonal family.  Whenever
$p_{\alpha,r}$ is decreasing in $t$, Theorem~\ref{thm:zonal-main} applies
directly, with sharp contraction factor given by the degree-one multiplier
$\lambda_1(p_{\alpha,r})$. A direct evaluation of \eqref{eq:intro-lambda1} gives \[
\lambda_{1,\alpha}(r)
=
\frac{
\Gamma\!\left(\frac{n-\alpha}{2}\right)
\Gamma\!\left(1-\frac{\alpha}{2}\right)}
{\Gamma\!\left(\frac n2\right)\Gamma(1-\alpha)}
\frac{n-\alpha}{n}\,
r\,
{}_2F_1\!\left(
\frac{\alpha}{2},
\frac{n+\alpha}{2};
\frac{n+2}{2};
r^2
\right).
\]
\end{remark}
\section{Euclidean harmonic estimates: the exact pointwise Jacobian profile}
\label{local:sec:euclidean-jacobian}

We now determine the pointwise determinant supremum for ordinary harmonic
self-maps.  For $n\ge3$, harmonicity is not preserved by precomposition
with M\"obius automorphisms of the ball, so the centered estimate alone
does not give a sharp bound elsewhere.  Instead, the Poisson gradient
turns the problem into a finite-dimensional minimization over positive
definite matrices.  This is a determinant variational principle of
Lewis type \cite{Lewis1979}.

We first prove the matrix formula and construct an extremal.  We then
reduce it by symmetry to a unique scalar minimum and establish the
anisotropy of the optimal derivative.  Finally, we derive and evaluate
a quadratic relaxation in terms of the Poisson-information matrix.

Let
\begin{equation}\label{local:eq:euclidean-Poisson-kernel}
 P(x,\eta)
 :=
 \frac{1-|x|^2}{|x-\eta|^n},
 \qquad
 x\in\B^n,\quad \eta\in\Sph^{n-1},
\end{equation}
be the Euclidean Poisson kernel, with $\sigma$ normalized so that
\[
 \int_{\Sph^{n-1}}P(x,\eta)\,d\sigma(\eta)=1.
\]
For fixed $x\in\B^n$, put
\begin{equation}\label{local:eq:gx-definition}
 g_x(\eta):=\nabla_xP(x,\eta)
\end{equation}
and, for a positive definite symmetric matrix $C$, define
\begin{equation}\label{local:eq:LxC-definition}
 \mathcal L_x(C)
 :=
 \int_{\Sph^{n-1}}|C g_x(\eta)|\,d\sigma(\eta).
\end{equation}

\subsection{An exact determinant variational principle}

\begin{theorem}[Exact Euclidean Jacobian extremal principle]
\label{local:thm:exact-euclidean-jacobian}
For every $x\in\B^n$,
\begin{equation}\label{local:eq:exact-matrix-profile}
 \sup\left\{
 |\det Df(x)|:
 f:\B^n\to\B^n\ \text{harmonic}
 \right\}
 =
 \frac1{n^n}
 \inf_{C>0}
 \frac{\mathcal L_x(C)^n}{\det C},
\end{equation}
where the infimum is over positive definite symmetric matrices.  A minimizing matrix exists; its positive scalar multiples give the same value.

If $C_x$ is a minimizing matrix, then the boundary map
\begin{equation}\label{local:eq:exact-extremal-boundary-general}
 F_x(\eta)
 :=
 \frac{C_xg_x(\eta)}{|C_xg_x(\eta)|}
\end{equation}
has values in $\Sph^{n-1}$, and its Poisson extension $f_x=P[F_x]$ is an
extremal.  Moreover,
\begin{equation}\label{local:eq:exact-extremal-derivative-general}
 Df_x(x)
 =
 \frac{\mathcal L_x(C_x)}{n}\,C_x^{-1}.
\end{equation}
\end{theorem}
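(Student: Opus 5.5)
Every harmonic $f:\B^n\to\B^n$ is $f=P[F]$ with $|F|\le1$ a.e., by Lemma~\ref{lem:boundary-representation}. Differentiating under the integral gives
\[
 Df(x)=\int_{\Sph^{n-1}}F(\eta)\,g_x(\eta)^T\,d\sigma(\eta).
\]
We plan to prove \eqref{local:eq:exact-matrix-profile} as a duality: an upper bound from an AM--GM/trace inequality, and a matching lower bound from the first-order optimality condition of the matrix problem. The boundary map \eqref{local:eq:exact-extremal-boundary-general} is exactly what makes the trace inequality sharp.

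\textbf{Upper bound.} Fix a symmetric $C>0$ and put $A=Df(x)$. We may assume $\det A\neq0$, replacing $F$ by $RF$ with $R\in O(n)$ if needed so that $\det(AC)\ge0$; this does not change $|\det A|$. Let $AC=U|AC|$ be the polar decomposition, with $U\in O(n)$. Then
\[
 \tr|AC|=\tr(U^TAC)=\int\langle U^TF(\eta),Cg_x(\eta)\rangle\,d\sigma(\eta)\le\mathcal L_x(C).
\]
AM--GM on the singular values gives $(|\det A|\det C)^{1/n}\le\frac1n\tr|AC|$. Hence $|\det A|\le n^{-n}\mathcal L_x(C)^n/\det C$ for every $C$, which proves the inequality ``$\le$'' in \eqref{local:eq:exact-matrix-profile}.

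\textbf{Existence of a minimizer.} The functional $\Psi(C)=\mathcal L_x(C)^n/\det C$ is invariant under $C\mapsto tC$, so we normalize $\det C=1$. The key input is that $g_x$ is not concentrated on a hyperplane: $g_x(\eta)$ spans $\R^n$ as $\eta$ varies, which follows from the explicit formula for $\nabla_xP$ since $g_x$ is a nonconstant smooth map. Consequently $\mathcal L_x(C)\ge c\|C\|$ for some $c>0$, by compactness over unit-norm positive semidefinite $C$. On $\det C=1$ the norm $\|C\|$ tends to infinity as $C$ degenerates, so $\Psi$ is coercive and attains its minimum at some $C_x$.

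\textbf{Lower bound and extremal.} This is the main step. Because $g_x$ has nonvanishing continuous density on the generic set, $\{Cg_x=0\}$ is null, so $\mathcal L_x$ is differentiable at $C_x$. Differentiating $\log\Psi$ along symmetric perturbations $H$ gives the stationarity condition
\[
 \frac{n}{\mathcal L_x(C_x)}\int\frac{\langle C_xg_x,Hg_x\rangle}{|C_xg_x|}\,d\sigma=\tr(C_x^{-1}H).
\]
Set $F_x=C_xg_x/|C_xg_x|$ and $f_x=P[F_x]$, and write $M=Df_x(x)=\int F_xg_x^T\,d\sigma$. The stationarity condition says exactly that the symmetric part of $M$ equals $\frac{\mathcal L_x(C_x)}{n}C_x^{-1}$.

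It remains to show that $M$ is itself symmetric; this is the delicate point, and it concerns the skew part of $M$. We expect to handle it by noting that $C_xM=\int \frac{C_xg_x\,(C_xg_x)^T}{|C_xg_x|}\,d\sigma\;C_x^{-1}$, so $C_xMC_x$ is manifestly symmetric. Combined with the symmetric-part identity, this forces $C_xM$ to be a multiple of the identity, i.e.
\[
 M=\frac{\mathcal L_x(C_x)}{n}\,C_x^{-1},
\]
which is \eqref{local:eq:exact-extremal-derivative-general}. Finally,
\[
 \det M=\frac{\mathcal L_x(C_x)^n}{n^n\det C_x},
\]
which attains the upper bound, so $f_x$ is an extremal and the supremum equals the minimum.
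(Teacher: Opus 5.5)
Your overall plan is the paper's: the trace/AM--GM upper bound (you take the polar decomposition of $AC$ rather than of $A$, which is equivalent), coercivity on $\{\det C=1\}$, the Euler--Lagrange equation, and the extremal data $C_xg_x/|C_xg_x|$. However, the step you single out as delicate fails as written, because the identity you propose is incorrect. Since $M=\int C_xg_xg_x^T/|C_xg_x|\,d\sigma$, one has
\[
 \Bigl(\int_{\Sph^{n-1}}\frac{C_xg_x(C_xg_x)^T}{|C_xg_x|}\,d\sigma\Bigr)C_x^{-1}
 =\int_{\Sph^{n-1}}\frac{C_xg_xg_x^T}{|C_xg_x|}\,d\sigma
 =M ,
\]
so this expression equals $M$, not $C_xM$. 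What is manifestly symmetric is $MC_x=C_xM_{C_x}C_x$, where $M_{C_x}:=\int g_xg_x^T/|C_xg_x|\,d\sigma$. It is not $C_xMC_x$. Moreover, $(C_xMC_x)^T=C_xM^TC_x$ and $C_x$ is invertible, so symmetry of $C_xMC_x$ is equivalent to symmetry of $M$. Your step therefore assumes exactly what it is meant to prove.

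The repair needs one more ingredient: uniqueness for a Lyapunov-type equation. Put $\lambda=\mathcal L_x(C_x)/n$. By your symmetric-part identity, $M=\lambda C_x^{-1}+K$ with $K^T=-K$. Symmetry of $MC_x$ gives $\lambda I+KC_x=\lambda I-C_xK$, that is, $KC_x+C_xK=0$. In an eigenbasis of $C_x$ with eigenvalues $c_i>0$ this reads $(c_i+c_j)K_{ij}=0$, so $K=0$ and $M=\lambda C_x^{-1}$. The paper does the same thing in different notation. It writes $Df_C(x)=CM_C$ and reads the Euler--Lagrange equation as $\tfrac12(CM_C+M_CC)=\lambda C^{-1}$ for the symmetric unknown $M_C$. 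It then diagonalizes $C$ to obtain $M_C=\lambda C^{-2}$.

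A few smaller points also need fixing.

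\emph{Spanning.} That $g_x$ is a nonconstant smooth map does not imply that its values span $\R^n$. Use instead $\int_{\Sph^{n-1}}\eta\,g_x(\eta)^T\,d\sigma(\eta)=I$, obtained by differentiating Poisson reproduction of the coordinate functions.

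\emph{Nonvanishing.} For $F_x$ to be defined with values in $\Sph^{n-1}$ you need $g_x(\eta)\neq0$. Your justification of the null-set claim is unclear. In fact $g_x$ never vanishes: the paper derives this from the score-sphere identity, and it also follows directly from the explicit gradient formula.

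\emph{Open ball.} The supremum is over maps into the open ball, so you must check $|f_x|<1$ on $\B^n$. The map $f_x$ is nonconstant because $Df_x(x)=\lambda C_x^{-1}$ is invertible. Then strict convexity of the norm, or the strong maximum principle for $|f_x|^2$ as in the paper, gives $|f_x|<1$.
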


\begin{proof}
Let $f:\B^n\to\B^n$ be harmonic.  Since $f$ is bounded, it has a Poisson
representation
\[
 f(x)
 =
 \int_{\Sph^{n-1}}P(x,\eta)F(\eta)\,d\sigma(\eta),
 \qquad |F(\eta)|\le1\quad\text{a.e.},
\]
and hence
\begin{equation}\label{local:eq:Df-Poisson-gradient}
 Df(x)
 =
 \int_{\Sph^{n-1}}F(\eta)g_x(\eta)^T\,d\sigma(\eta).
\end{equation}
Write the polar decomposition as \(Df(x)=QH\), where \(Q\in O(n)\) is the orthogonal factor, representing the rotational/reflection part of \(Df(x)\), and
\[
H=\bigl(Df(x)^T Df(x)\bigr)^{1/2}\ge 0
\]
is the symmetric positive semidefinite factor, representing the stretching part of \(Df(x)\).
After replacing $F$ by $Q^TF$, equation \eqref{local:eq:Df-Poisson-gradient}
becomes
\[
 H
 =
 \int_{\Sph^{n-1}}\widetilde F(\eta)g_x(\eta)^T\,d\sigma(\eta),
 \qquad |\widetilde F|\le1.
\]
Therefore, for every $C>0$,
\[
 \tr(CH)
 =
 \int_{\Sph^{n-1}}
 \langle \widetilde F(\eta),Cg_x(\eta)\rangle\,d\sigma(\eta)
 \le
 \mathcal L_x(C).
\]
Applying the arithmetic--geometric mean inequality to
$C^{1/2}HC^{1/2}$ gives
\[
 (\det C\,\det H)^{1/n}
 \le
 \frac1n\tr(CH),
\]
so
\begin{equation}\label{local:eq:matrix-upper-bound}
 |\det Df(x)|
 \le
 \frac{\mathcal L_x(C)^n}{n^n\det C}.
\end{equation}
Taking the infimum proves one inequality in \eqref{local:eq:exact-matrix-profile}.

The quotient in \eqref{local:eq:exact-matrix-profile} is invariant under positive
rescaling of $C$, so we may impose $\det C=1$.  Differentiating Poisson reproduction of the coordinate functions gives
\[
 \int_{\Sph^{n-1}}\eta g_x(\eta)^T\,d\sigma(\eta)=I.
\]
Thus $g_x(\eta)$ spans $\R^n$, and $C\mapsto\mathcal L_x(C)$ is a
norm on the space of symmetric matrices.  A bounded minimizing sequence
with $\det C=1$ therefore has uniformly bounded eigenvalues.  Their
product is $1$, so they are also bounded away from zero.  A subsequence
converges to a positive definite minimizer.

The denominator in the extremal data never vanishes.  Indeed,
$g_x=P\nabla_x\log P$, and the score identity
\eqref{local:eq:score-sphere-new} gives
\[
 |\nabla_x\log P(x,\eta)|
 \ge\frac{n-(n-2)|x|}{1-|x|^2}>0.
\]
Since the sphere is compact, differentiation under the following
integrals is justified.

Fix such a minimizer $C$ and set
\begin{equation}\label{local:eq:M-C-definition}
 M_C
 :=
 \int_{\Sph^{n-1}}
 \frac{g_x(\eta)g_x(\eta)^T}{|Cg_x(\eta)|}\,d\sigma(\eta).
\end{equation}
The first variation of $\mathcal L_x$ under the constraint $\det C=1$ gives
\begin{equation}\label{local:eq:Euler-matrix}
 \frac12(CM_C+M_CC)
 =
 \frac{\mathcal L_x(C)}{n}\,C^{-1}.
\end{equation}
Indeed, the Lagrange multiplier is $\mathcal L_x(C)/n$, as follows by
multiplying \eqref{local:eq:Euler-matrix} by $C$ and taking the trace.  Diagonalize
$C$.  The off-diagonal entries of \eqref{local:eq:Euler-matrix} then show that
$M_C$ is diagonal in the same basis, while the diagonal entries give
\begin{equation}\label{local:eq:Lewis-isotropy}
 M_C
 =
 \frac{\mathcal L_x(C)}{n}\,C^{-2}.
\end{equation}

Now use the boundary data
\[
 F_C(\eta)
 =
 \frac{Cg_x(\eta)}{|Cg_x(\eta)|}.
\]
Its Poisson extension takes values in the closed unit ball.  By \eqref{local:eq:M-C-definition} and
\eqref{local:eq:Lewis-isotropy},
\[
 Df_C(x)
 =
 \int_{\Sph^{n-1}}
 \frac{Cg_x(\eta)g_x(\eta)^T}{|Cg_x(\eta)|}\,d\sigma(\eta)
 =
 CM_C
 =
 \frac{\mathcal L_x(C)}nC^{-1}.
\]
The derivative is invertible, so $f_C$ is nonconstant.  The strong maximum
principle applied to the subharmonic function $|f_C|^2$ now gives
$f_C(\B^n)\subset\B^n$.  Moreover,
\[
 |\det Df_C(x)|
 =
 \frac{\mathcal L_x(C)^n}{n^n\det C},
\]
which proves equality and the extremal assertion.
\end{proof}

\begin{remark}
Equation \eqref{local:eq:Lewis-isotropy} is an isotropic ellipsoid condition of the
same general type that appears in determinant formulations of John's theorem.
The abstract ellipsoidal viewpoint goes back, in particular, to Lewis
\cite{Lewis1979}; the point here is that the Poisson-gradient family
$g_x(\eta)$ makes this principle an exact Jacobian extremal formula for
vector-valued harmonic self-maps.
\end{remark}

\subsection{The sharp radial profile and its extremals}

By rotational invariance, the left-hand side of
\eqref{local:eq:exact-matrix-profile} depends only on $r=|x|$.  Define
\begin{equation}\label{local:eq:Jn-definition}
 \mathcal J_n(r)
 :=
 \sup\left\{
 |\det Df(x)|:
 f:\B^n\to\B^n\ \text{harmonic},\quad |x|=r
 \right\}.
\end{equation}
For $x=re_n$, write
\[
 g_r(\eta)=g_{re_n}(\eta)=(g_r'(\eta),g_{r,n}(\eta))
 \in\R^{n-1}\times\R,
\]
and define, for $q>0$,
\begin{equation}\label{local:eq:Lnrq-definition}
 L_{n,r}(q)
 :=
 \int_{\Sph^{n-1}}
 \sqrt{|g_r'(\eta)|^2+q^2g_{r,n}(\eta)^2}\,d\sigma(\eta).
\end{equation}

\begin{theorem}[Sharp radial Euclidean Jacobian profile]
\label{local:thm:sharp-radial-euclidean-profile}
For every $n\ge2$ and $0\le r<1$,
\begin{equation}\label{local:eq:Jn-exact-one-parameter}
 \mathcal J_n(r)
 =
 \frac1{n^n}
 \min_{q>0}
 \frac{L_{n,r}(q)^n}{q}.
\end{equation}
The minimizer $q=q_n(r)$ is unique and is characterized by
\begin{equation}\label{local:eq:q-stationarity}
 n q_n(r)L_{n,r}'(q_n(r))
 =
 L_{n,r}(q_n(r)).
\end{equation}

Put
\begin{equation}\label{local:eq:D-N-definition}
 D_r(t):=1+r^2-2rt
\end{equation}
and
\begin{equation}\label{local:eq:Nnr-definition}
 N_{n,r}(t)
 :=
 nt-(n+2)r+(4-n)r^2t+(n-2)r^3.
\end{equation}
Then
\begin{align}
 L_{n,r}(q)
 &=c_n\int_{-1}^1
 \frac{
 \sqrt{
 n^2(1-r^2)^2(1-t^2)+q^2N_{n,r}(t)^2
 }
 }{
 D_r(t)^{n/2+1}
 }
 (1-t^2)^{(n-3)/2}\,dt,
 \label{local:eq:Lnrq-one-dimensional}\\
 c_n
 &=
 \frac{\Gamma(n/2)}{\sqrt\pi\,\Gamma((n-1)/2)}.
 \notag
\end{align}

An extremal boundary map at $re_n$ is, up to a target orthogonal map $Q$,
\begin{equation}\label{local:eq:radial-extremal-boundary}
 F_{n,r}(\eta)
 =
 Q\,
 \frac{
 \bigl(
 n(1-r^2)\eta',\,
 q_n(r)N_{n,r}(\eta_n)
 \bigr)
 }{
 \sqrt{
 n^2(1-r^2)^2(1-\eta_n^2)
 +q_n(r)^2N_{n,r}(\eta_n)^2
 }
 }.
\end{equation}
For its Poisson extension,
\begin{equation}\label{local:eq:radial-extremal-derivative}
 Df(re_n)
 =
 Q\,
 \frac{L_{n,r}(q_n(r))}{n}
 \operatorname{diag}\left(1,\ldots,1,\frac1{q_n(r)}\right).
\end{equation}
\end{theorem}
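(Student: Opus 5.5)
The plan is to start from Theorem~\ref{local:thm:exact-euclidean-jacobian} at $x=re_n$ and reduce the matrix minimization to one variable by symmetry. The first step is to compute $g_r(\eta)=\nabla_xP(x,\eta)|_{x=re_n}$ explicitly. With $P=(1-|x|^2)|x-\eta|^{-n}$ one has
\[
 \nabla_xP=-\frac{2x}{|x-\eta|^n}-n(1-|x|^2)\frac{x-\eta}{|x-\eta|^{n+2}}.
\]
At $x=re_n$ with $|x-\eta|^2=D_r(\eta_n)$, the tangential part is $n(1-r^2)\eta'/D_r^{n/2+1}$. The normal part is $N_{n,r}(\eta_n)/D_r^{n/2+1}$ after expanding $-2rD_r-n(1-r^2)(r-t)$. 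I will check this expansion against \eqref{local:eq:Nnr-definition}. Substituting into $\mathcal L$ with $C=\operatorname{diag}(1,\dots,1,q)$ and using the slice formula $d\sigma=c_n(1-t^2)^{(n-3)/2}dt\,d\sigma_{n-2}$, with $|\eta'|^2=1-t^2$, gives \eqref{local:eq:Lnrq-one-dimensional}.

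The second step is the symmetry reduction. For $U\in O(n)$ fixing $e_n$ we have $g_r(U\eta)=Ug_r(\eta)$, so $\mathcal L(UCU^T)=\mathcal L(C)$. The functional $C\mapsto\mathcal L(C)$ is convex (it is a norm), and on $\{\det C=1\}$ the map $C\mapsto\log\mathcal L(C)-\frac1n\log\det C$ should be minimized over all $C>0$, where $-\log\det$ is strictly convex. Averaging a minimizer over the stabilizer group $O(n-1)$ therefore does not increase the objective $\log\mathcal L(C)-\frac1n\log\det C$. This is because $\log\mathcal L$ is convex along the averaging, so by Jensen $\mathcal L(\bar C)\le$ the average of $\mathcal L$, while $\det\bar C\ge$ the geometric mean of the determinants. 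The averaged matrix commutes with the stabilizer, so it has the form $\operatorname{diag}(p,\dots,p,pq)$, and after scaling, $\operatorname{diag}(1,\dots,1,q)$ with $\det C=q$. This yields \eqref{local:eq:Jn-exact-one-parameter}.

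Existence of a minimizer in $q$ comes from Theorem~\ref{local:thm:exact-euclidean-jacobian}, or directly from the limits: $L^n/q\to\infty$ as $q\to0$, and as $q\to\infty$ it grows like $q^{n-1}$ since $g_{r,n}\not\equiv0$. The extremal data and the derivative formula \eqref{local:eq:radial-extremal-derivative} then follow from \eqref{local:eq:exact-extremal-boundary-general} and \eqref{local:eq:exact-extremal-derivative-general} with $C^{-1}=\operatorname{diag}(1,\dots,1,1/q)$. The positive factor $D_r^{-n/2-1}$ cancels in the normalization.

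The main obstacle is uniqueness of the minimizer. I would pass to $s=\log q$ and study $\psi(s)=n\log L(e^s)-s$. Each integrand $\sqrt{a+e^{2s}b}$ is log-convex in $s$, and integrals of log-convex functions are log-convex, so $\psi$ is convex. Strict convexity holds unless the ratio $b/a$ is a.e.\ constant, i.e.\ unless $N_{n,r}(t)^2/(1-t^2)$ is constant in $t$. That cannot happen, because $N_{n,r}$ is affine and nonzero while $1-t^2$ vanishes at $\pm1$ (with $n^2(1-r^2)^2>0$). A strictly convex coercive $\psi$ has a unique critical point. Its equation $\psi'(s)=0$ reads $nqL'(q)/L(q)=1$, which is exactly \eqref{local:eq:q-stationarity}. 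For $r=0$ one checks that $q=1$ recovers the isotropic case, as a sanity check.
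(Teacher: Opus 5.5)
Your proposal is correct and follows the paper's proof. It uses the same $O(n-1)$-averaging reduction (convexity of $\mathcal L_x$ and concavity of $\log\det$), the same gradient computation and slicing formula, and reads the extremals off Theorem~\ref{local:thm:exact-euclidean-jacobian}. Uniqueness again comes from log-convexity plus H\"older; the only differences are that you get strictness from non-proportionality of the integrands and existence from coercivity, where the paper uses pointwise strict log-convexity and the limits $0$ and $1$ of the derivative of $\log L_{n,r}(e^s)$. One small slip: the appeal to convexity of $\log\mathcal L_x$ is unjustified, since the logarithm of a norm is not convex, but it is also unnecessary, because $\mathcal L_x(\overline C)\le\mathcal L_x(C)$ already follows from convexity of $\mathcal L_x$ itself.
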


\begin{proof}
Let $K=O(n-1)$ act on $\R^n$ by rotations fixing $e_n$.  For $R\in K$,
rotational covariance of the Poisson kernel gives
\[
 g_r(R\eta)=Rg_r(\eta)
\]
and hence
\[
 \mathcal L_{re_n}(RCR^T)=\mathcal L_{re_n}(C).
\]
Average a positive definite $C$ over $K$:
\[
 \overline C=\int_K RCR^T\,dR.
\]
Convexity of $C\mapsto\mathcal L_{re_n}(C)$ gives
\[
 \mathcal L_{re_n}(\overline C)
 \le
 \mathcal L_{re_n}(C),
\]
whereas concavity of $\log\det$ gives
\[
 \det\overline C\ge\det C.
\]
Thus the quotient in \eqref{local:eq:exact-matrix-profile} does not increase under
this averaging.  Every $K$-invariant positive definite matrix has the form
\[
 \operatorname{diag}(a,\ldots,a,b),
 \qquad a,b>0.
\]
Using scale invariance and setting $q=b/a$ reduces
\eqref{local:eq:exact-matrix-profile} exactly to
\eqref{local:eq:Jn-exact-one-parameter}.

To prove uniqueness, write $q=e^s$.  For almost every $\eta$, both
$|g_r'(\eta)|$ and $|g_{r,n}(\eta)|$ are nonzero, and
\[
 s\longmapsto
 \sqrt{|g_r'(\eta)|^2+e^{2s}g_{r,n}(\eta)^2}
\]
is strictly log-convex.  H\"older's inequality therefore implies that
$s\mapsto\log L_{n,r}(e^s)$ is strictly convex.  Its derivative tends to $0$
as $s\to-\infty$ and to $1$ as $s\to+\infty$.  Hence
\[
 s\longmapsto n\log L_{n,r}(e^s)-s
\]
has a unique critical point, which is its unique minimum.  Differentiation
gives \eqref{local:eq:q-stationarity}.

It remains to compute $g_r$.  From \eqref{local:eq:euclidean-Poisson-kernel},
\[
 \nabla_xP(x,\eta)
 =
 -2x|x-\eta|^{-n}
 -n(1-|x|^2)(x-\eta)|x-\eta|^{-n-2}.
\]
At $x=re_n$, with $t=\eta_n$, this gives
\begin{align*}
 g_r'(\eta)
 &=
 \frac{n(1-r^2)\eta'}{D_r(t)^{n/2+1}},\\
 g_{r,n}(\eta)
 &=
 \frac{N_{n,r}(t)}{D_r(t)^{n/2+1}}.
\end{align*}
The standard slicing formula on the sphere yields
\eqref{local:eq:Lnrq-one-dimensional}.  Formula
\eqref{local:eq:radial-extremal-boundary} is exactly
\eqref{local:eq:exact-extremal-boundary-general} for
$C=\operatorname{diag}(1,\ldots,1,q_n(r))$, and
\eqref{local:eq:radial-extremal-derivative} follows from
\eqref{local:eq:exact-extremal-derivative-general}.
\end{proof}

\begin{proposition}[Strict anisotropy of the Euclidean extremal]
\label{local:prop:q-less-than-one}
Let $n\ge3$, $0<r<1$, and let $q_n(r)$ be the unique minimizer in
\eqref{local:eq:Jn-exact-one-parameter}.
Then
\[
0<q_n(r)<1.
\]
In particular, for \(n\ge3\) and \(r>0\), the sharp extremal derivative is
anisotropic: its radial singular value is strictly larger than its
tangential singular values.
\end{proposition}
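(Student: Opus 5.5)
Since $s\mapsto \Psi(s):=n\log L_{n,r}(e^s)-s$ is strictly convex with a unique critical point $s_*=\log q_n(r)$ (proof of Theorem~\ref{local:thm:sharp-radial-euclidean-profile}), it suffices to show $\Psi'(0)>0$. Then the minimizer lies to the left of $0$, giving $q_n(r)<1$; positivity of $q_n(r)$ is automatic. By \eqref{local:eq:q-stationarity} the condition $\Psi'(0)>0$ reads $nL_{n,r}'(1)>L_{n,r}(1)$. Differentiating \eqref{local:eq:Lnrq-definition} at $q=1$ and writing $g=g_r$, this becomes
\[
 n\int_{\Sph^{n-1}}\frac{g_{r,n}(\eta)^2}{|g_r(\eta)|}\,d\sigma(\eta)
 >
 \int_{\Sph^{n-1}}|g_r(\eta)|\,d\sigma(\eta)
 =\int_{\Sph^{n-1}}\frac{|g_r'(\eta)|^2+g_{r,n}(\eta)^2}{|g_r(\eta)|}\,d\sigma(\eta).
\]
Equivalently, with the weight $d\mu=|g_r|^{-1}d\sigma$, the radial component must carry strictly more than its isotropic share:
\[
 \int g_{r,n}^2\,d\mu>\frac1{n-1}\int|g_r'|^2\,d\mu .
\]
Equivalently again, the matrix $M_I$ of \eqref{local:eq:M-C-definition} has $(M_I)_{nn}$ strictly larger than each tangential diagonal entry.

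To prove this I would use the explicit formulas for $g_r'$ and $g_{r,n}$ and reduce to the one-variable integral, exactly as in \eqref{local:eq:Lnrq-one-dimensional}. Set $t=\eta_n$, $D=D_r(t)$, $N=N_{n,r}(t)$ and $S=\sqrt{n^2(1-r^2)^2(1-t^2)+N^2}$. The target inequality becomes
\[
 \int_{-1}^1\frac{(n-1)N^2-n^2(1-r^2)^2(1-t^2)}{S\,D^{n/2+1}}(1-t^2)^{(n-3)/2}\,dt>0 .
\]
At $r=0$ we have $N=nt$, and the integrand is proportional to $(n-1)t^2-(1-t^2)$. This integrates to zero by the isotropy of the weight, consistent with $q_n(0)=1$.

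So the heart of the proof is showing that this integral is strictly positive for $r>0$. My first attempt is to compare with the $q$-independent identity obtained from $\int\eta\, g_x^T\,d\sigma=I$, i.e.
\[
 \int\eta_n g_{r,n}\,d\sigma=\int\eta_1 g_{r,1}\,d\sigma=1 .
\]
The idea is to exploit that $g_r$ concentrates, as $r$ grows, near the pole $\eta=e_n$, where $|g_{r,n}|\gg|g_r'|$. Concretely, I would substitute the Kelvin-type variable $t\mapsto (t-r)/(1-rt)$, which rationalizes $D$. This should turn the integrand into an expression whose negative part (near the equator of the new variable) is dominated by its positive part.

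Alternatively, I would differentiate the integral in $r$ at $r=0$ to show positivity for small $r$. I would then argue that $q_n(r)=1$ cannot occur for any $r\in(0,1)$: that would force the isotropic $C=I$ to satisfy \eqref{local:eq:Lewis-isotropy}, and I would try to show this is incompatible with the hypothesis $n\ge3$, via the fact that $N$ contains the term $(n-2)r^3$ and $(4-n)r^2t$. Continuity of $q_n$ in $r$ then propagates the strict inequality across $(0,1)$.

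The main obstacle is precisely this sign of a non-elementary integral involving the square root $S$ for all $r\in(0,1)$. I expect the continuity/no-crossing argument to be the most robust route, with the explicit small-$r$ expansion as the anchor. Note that for $n=2$ this integral must vanish identically, since the profile there is $(1-r^2)^{-2}$ with an isotropic extremal. This explains why the factor $n-2$ has to appear in the computation.
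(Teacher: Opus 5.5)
Your reduction is correct and is exactly the paper's first step. Strict convexity of $\Psi(s)=n\log L_{n,r}(e^s)-s$ reduces the claim to $\Psi'(0)>0$, i.e.\ to $\int_{\Sph^{n-1}}\bigl(n g_{r,n}^2-|g_r|^2\bigr)|g_r|^{-1}\,d\sigma>0$. Your one-variable form of this integral and the check at $r=0$ are also right. But the proposal stops exactly where the proof has to begin: you never show that this integral is strictly positive for every $r\in(0,1)$, and neither suggested route does so. The Kelvin-type substitution is only asserted to ``should'' make the negative part dominated. The no-crossing argument is circular. By the $O(n-1)$-symmetry, $M_I$ is diagonal with equal tangential entries and $\tr M_I=L_{n,r}(1)$. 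Hence $C=I$ satisfies \eqref{local:eq:Lewis-isotropy} if and only if $n(M_I)_{nn}=\tr M_I$, i.e.\ if and only if $nL_{n,r}'(1)=L_{n,r}(1)$. Ruling out $q_n(r)=1$ is therefore literally the statement that your integral never vanishes, and pointing at the terms $(n-2)r^3$ and $(4-n)r^2t$ of $N_{n,r}$ supplies no mechanism for it. The small-$r$ anchor is also weaker than stated. Substituting $t\mapsto-t$ and using $N_{n,-r}(-t)=-N_{n,r}(t)$, $D_{-r}(-t)=D_r(t)$ shows the integral is even in $r$. So its first $r$-derivative at $0$ vanishes, you would need the $r^2$ coefficient, and that controls only small $r$ anyway.

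The missing idea is a change of variables after which the integrand becomes pointwise nonnegative. The paper inverts about $x=re_n$ via $\frac{\eta-x}{|\eta-x|^2}=\frac{x+\omega}{1-r^2}$. Then $P(x,\eta)\,d\sigma(\eta)=|x+\omega|^{2-n}\,d\sigma(\omega)$, and the score becomes affine, $\nabla_x\log P(x,\eta)=\bigl(n\omega+(n-2)x\bigr)/(1-r^2)$; this is the score-sphere identity \eqref{local:eq:score-sphere-new}. Since $g_r=P\,\nabla_x\log P$, in the new variable $t=\omega_n$ and with $a=(n-2)r$, your integral becomes a positive multiple of $\int_{-1}^1J(t)\phi(t)\,d\nu_n(t)$. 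Here $J(t)=n^2(nt^2-1)+2na(n-1)t+(n-1)a^2$ and $\phi=H^{-(n-2)/2}Z^{-1/2}$, with $H=1+r^2+2rt$ and $Z=n^2+a^2+2nat$, so the square root now involves only a linear function of $t$. Two spherical integrations by parts move $t$ and $nt^2-1$ onto $\phi'$ and $\phi''$. The explicit logarithmic derivatives of $\phi$ then turn the integrand into $a^2\phi\,B_n(X,Y)$ with $X=(1-t^2)/H$ and $Y=n(1-t^2)/Z$. Completing the square exhibits $B_n$ as a positive definite quadratic form in $\bigl(X-\frac{n-2}{n},\,Y-\frac1n\bigr)$, and $B_n(X(t),Y(t))\to n-1$ as $t\to\pm1$. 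This yields strict positivity for all $r\in(0,1)$ at once, and the prefactor $a^2=(n-2)^2r^2$ is exactly the $n\ge3$, $r>0$ dependence you anticipated. Without this, or some other completed pointwise-positivity or domination argument, your proposal establishes only the reduction, not the proposition.
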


\begin{proof}
Write $L(q)=L_{n,r}(q)$ and set $\Psi(s)=n\log L(e^s)-s$.
The strict convexity proved in
Theorem~\ref{local:thm:sharp-radial-euclidean-profile} shows that its
unique minimizer is $\log q_n(r)$.  It therefore suffices to prove
$\Psi'(0)>0$.

Differentiating at \(q=1\), we obtain
\begin{equation}\label{local:eq:Psi-prime-one}
\Psi'(0)
=
\frac{nL'(1)-L(1)}{L(1)},
\end{equation}
where
\begin{equation}\label{local:eq:q-one-numerator}
nL'(1)-L(1)
=
\int_{\Sph^{n-1}}
\frac{
n g_{r,n}(\eta)^2-|g_r(\eta)|^2
}{
|g_r(\eta)|
}
\,d\sigma(\eta).
\end{equation}

We prove that the integral in \eqref{local:eq:q-one-numerator} is strictly
positive. Set
\[
x=re_n,
\qquad
A:=1-r^2,
\]
and introduce \(\omega\in\Sph^{n-1}\) by
\begin{equation}\label{local:eq:omega-change}
\frac{\eta-x}{|\eta-x|^2}
=
\frac{x+\omega}{A}.
\end{equation}
Equivalently,
\[
\eta
=
x+A\frac{x+\omega}{|x+\omega|^2}.
\]
The corresponding change of variables satisfies
\begin{equation}\label{local:eq:harmonic-measure-change}
P(x,\eta)\,d\sigma(\eta)
=
|x+\omega|^{2-n}\,d\sigma(\omega).
\end{equation}

Let
\[
S_x(\eta):=\nabla_x\log P(x,\eta).
\]
Since
\[
g_r(\eta)=P(x,\eta)S_x(\eta),
\]
and
\[
S_x(\eta)
=
-\frac{2x}{A}
+n\frac{\eta-x}{|\eta-x|^2},
\]
the change of variables \eqref{local:eq:omega-change} gives
\begin{equation}\label{local:eq:score-omega}
S_x(\eta)
=
\frac{n\omega+(n-2)x}{A}.
\end{equation}

Write
\[
t:=\omega_n,
\qquad
a:=(n-2)r,
\]
and set
\[
H(t):=1+r^2+2rt,
\qquad
Z(t):=n^2+a^2+2nat.
\]
Using \eqref{local:eq:harmonic-measure-change} and
\eqref{local:eq:score-omega}, the sign of
\eqref{local:eq:q-one-numerator} is the sign of
\begin{equation}\label{local:eq:Jphi-integral}
\int_{-1}^1 J(t)\phi(t)\,d\nu_n(t),
\end{equation}
where
\[
\phi(t)
:=
H(t)^{-(n-2)/2}Z(t)^{-1/2},
\]
\[
J(t)
:=
n^2(nt^2-1)
+2na(n-1)t
+(n-1)a^2,
\]
and
\[
d\nu_n(t)
=
c_n(1-t^2)^{(n-3)/2}\,dt.
\]

For smooth \(\phi\), spherical integration by parts gives
\begin{equation}\label{local:eq:spherical-ibp-first}
\int_{-1}^1 t\phi(t)\,d\nu_n(t)
=
\frac{1}{n-1}
\int_{-1}^1
(1-t^2)\phi'(t)\,d\nu_n(t)
\end{equation}
and
\begin{equation}\label{local:eq:spherical-ibp-second}
\int_{-1}^1 (nt^2-1)\phi(t)\,d\nu_n(t)
=
\frac{1}{n+1}
\int_{-1}^1
(1-t^2)^2\phi''(t)\,d\nu_n(t).
\end{equation}
Consequently, \eqref{local:eq:Jphi-integral} equals
\begin{equation}\label{local:eq:Jphi-after-ibp}
\int_{-1}^1
\left[
\frac{n^2}{n+1}(1-t^2)^2\phi''
+2na(1-t^2)\phi'
+(n-1)a^2\phi
\right]
d\nu_n.
\end{equation}

A direct differentiation gives
\[
\frac{\phi'}{\phi}
=
-a\left(\frac1H+\frac nZ\right)
\]
and
\[
\frac{\phi''}{\phi}
=
a^2\left(\frac1H+\frac nZ\right)^2
+
\frac{2a^2}{(n-2)H^2}
+
\frac{2n^2a^2}{Z^2}.
\]
Define
\[
X(t):=\frac{1-t^2}{H(t)},
\qquad
Y(t):=\frac{n(1-t^2)}{Z(t)}.
\]
The integrand in \eqref{local:eq:Jphi-after-ibp} is then
\[
a^2\phi(t)\,B_n(X(t),Y(t)),
\]
where
\[
B_n(X,Y)
=
\frac{n^2}{n+1}
\left(
\frac{n}{n-2}X^2+2XY+3Y^2
\right)
-2n(X+Y)+(n-1).
\]
Completing the square yields
\begin{align}
B_n(X,Y)
=
\frac{n^2}{n+1}
\Bigg[
&
\frac{n}{n-2}
\left(
X-\frac{n-2}{n}
\right)^2
\nonumber\\
&+
2
\left(
X-\frac{n-2}{n}
\right)
\left(
Y-\frac1n
\right)
+
3
\left(
Y-\frac1n
\right)^2
\Bigg].
\label{local:eq:Bn-positive}
\end{align}
The quadratic form in \eqref{local:eq:Bn-positive} is positive definite, since
\[
\det
\begin{pmatrix}
\dfrac{n}{n-2} & 1\\[1mm]
1 & 3
\end{pmatrix}
=
\frac{2(n+1)}{n-2}>0.
\]
Hence
\[
B_n(X,Y)\ge0.
\]
Moreover, \(B_n(X(t),Y(t))\) is not identically zero. Indeed,
\[
X(t),Y(t)\longrightarrow0
\qquad\text{as }t\to\pm1,
\]
and therefore
\[
B_n(X(t),Y(t))\longrightarrow n-1>0.
\]
Since \(a=(n-2)r>0\), \(\phi>0\), and \(d\nu_n\) is positive on
\((-1,1)\), it follows that
\[
nL'(1)-L(1)>0.
\]
Equation~\eqref{local:eq:Psi-prime-one} gives $\Psi'(0)>0$.
Strict convexity then forces $\log q_n(r)<0$, as required.
\end{proof}

\subsection{The Poisson-information bound as a quadratic relaxation}

Define the information matrix
\begin{equation}\label{local:eq:information-matrix-new}
 \mathcal I_n(x)
 :=
 \int_{\Sph^{n-1}}
 \frac{
 \nabla_xP(x,\eta)\nabla_xP(x,\eta)^T
 }{
 P(x,\eta)
 }
 \,d\sigma(\eta).
\end{equation}
Equivalently,
\[
 \mathcal I_n(x)
 =
 \int_{\Sph^{n-1}}
 P(x,\eta)
 \nabla_x\log P(x,\eta)
 \nabla_x\log P(x,\eta)^T
 \,d\sigma(\eta).
\]

\begin{theorem}[Poisson-information determinant bound]
\label{local:thm:euclidean-poisson-information}
Let $f:\B^n\to\B^n$ be harmonic.  Then, for every $x\in\B^n$,
\begin{equation}\label{local:eq:information-bound-new}
 |\det Df(x)|
 \le
 n^{-n/2}\sqrt{\det\mathcal I_n(x)}.
\end{equation}
The estimate is the quadratic $L^2$ relaxation of the exact variational
formula \eqref{local:eq:exact-matrix-profile}.
\end{theorem}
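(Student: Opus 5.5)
The plan is to start from the upper bound \eqref{local:eq:matrix-upper-bound} in Theorem~\ref{local:thm:exact-euclidean-jacobian}, which holds for every positive definite $C$, and to control $\mathcal L_x(C)$ by Cauchy--Schwarz against the probability measure $P(x,\eta)\,d\sigma(\eta)$. Write $g_x=P\,S_x$ with $S_x=\nabla_x\log P$. Then
\[
 \mathcal L_x(C)=\int_{\Sph^{n-1}}P(x,\eta)\,|CS_x(\eta)|\,d\sigma(\eta)
 \le\left(\int_{\Sph^{n-1}}P(x,\eta)\,|CS_x(\eta)|^2\,d\sigma(\eta)\right)^{1/2}
 =\sqrt{\tr\bigl(C\,\mathcal I_n(x)\,C\bigr)},
\]
using $\int P\,d\sigma=1$. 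This is the ``quadratic $L^2$ relaxation'' of the statement. Consequently
\[
 |\det Df(x)|\le\frac{1}{n^n}\,\frac{\tr(C\mathcal I_n(x)C)^{n/2}}{\det C}
\]
for every $C>0$.

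The remaining work is to minimize this relaxed quotient over $C$. First I will check that $\mathcal I_n(x)$ is positive definite. From $\int\eta\,g_x^T\,d\sigma=I$, any $v$ with $\mathcal I_n(x)v=0$ satisfies $\langle S_x,v\rangle=0$ $P\,d\sigma$-a.e., and hence $v=0$. Then I will choose $C=\mathcal I_n(x)^{-1/2}$. This gives $\tr(C\mathcal I_nC)=\tr I=n$ and $\det C=(\det\mathcal I_n)^{-1/2}$, so the bound becomes
\[
 n^{-n}\,n^{n/2}\sqrt{\det\mathcal I_n(x)}=n^{-n/2}\sqrt{\det\mathcal I_n(x)},
\]
which is exactly \eqref{local:eq:information-bound-new}. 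To justify that this choice is optimal, and so that the bound really is the minimum of the relaxation, I will set $B=C\mathcal I_n^{1/2}$. The quotient then equals $\tr(B^TB)^{n/2}\sqrt{\det\mathcal I_n}/|\det B|$, and AM--GM on the singular values of $B$ shows this is minimized exactly when $B$ is a multiple of an orthogonal matrix. Optimality is not needed for the inequality itself, but it justifies the phrase in the statement.

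I expect no serious obstacle. The only technical points are integrability of $|S_x|^2P$, which is immediate because $x$ is interior and $P$ is smooth and bounded below on the sphere, and the invertibility of $\mathcal I_n(x)$, which I handle with the reproduction identity above.
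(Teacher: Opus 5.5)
Your proposal is correct and is essentially the paper's proof. Cauchy--Schwarz against the probability measure $P(x,\eta)\,d\sigma(\eta)$ gives $\mathcal L_x(C)^2\le\tr(C^2\mathcal I_n(x))$, and the choice $C^2\propto\mathcal I_n(x)^{-1}$ (optimal by AM--GM) turns \eqref{local:eq:matrix-upper-bound} into \eqref{local:eq:information-bound-new}; your invertibility check for $\mathcal I_n(x)$ via $\int\eta\,g_x^T\,d\sigma=I$ is a detail the paper leaves implicit there, though it also follows from the explicit formula in Proposition~\ref{local:prop:euclidean-poisson-information}.
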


\begin{proof}
For every $C>0$, Cauchy--Schwarz with the weight $P(x,\eta)$ gives
\begin{align*}
 \mathcal L_x(C)^2
 &=
 \left(
 \int_{\Sph^{n-1}}
 \sqrt{P(x,\eta)}
 \frac{|Cg_x(\eta)|}{\sqrt{P(x,\eta)}}
 \,d\sigma(\eta)
 \right)^2\\
 &\le
 \tr\!\left(C^2\mathcal I_n(x)\right).
\end{align*}
Therefore Theorem~\ref{local:thm:exact-euclidean-jacobian} yields
\[
 |\det Df(x)|
 \le
 \frac1{n^n}
 \inf_{C>0}
 \frac{\tr(C^2\mathcal I_n(x))^{n/2}}{\det C}.
\]
Applying arithmetic--geometric mean to
$\mathcal I_n(x)^{1/2}C^2\mathcal I_n(x)^{1/2}$ gives
\[
 \tr(C^2\mathcal I_n(x))
 \ge
 n(\det C)^{2/n}(\det\mathcal I_n(x))^{1/n}.
\]
Equality is obtained when $C^2$ is a positive scalar multiple of
$\mathcal I_n(x)^{-1}$.  This proves \eqref{local:eq:information-bound-new}.
\end{proof}

\begin{proposition}[Euclidean Poisson-information matrix]
\label{local:prop:euclidean-poisson-information}
Let $x\in\B^n$ and put $r=|x|$.  Then
\begin{equation}\label{local:eq:I-explicit-new}
 \mathcal I_n(x)
 =
 \frac{n}{(1-r^2)^2}
 \left[
 \left(1-\frac{n-2}{n+2}r^2\right)I
 +
 \frac{4(n-2)}{n(n+2)}xx^T
 \right].
\end{equation}
Consequently, the tangential eigenvalue, of multiplicity $n-1$, is
\begin{equation}\label{local:eq:lambda-T-new}
 \lambda_T(r)
 =
 \frac{n}{(1-r^2)^2}
 \left(1-\frac{n-2}{n+2}r^2\right),
\end{equation}
whereas the radial eigenvalue is
\begin{equation}\label{local:eq:lambda-R-new}
 \lambda_R(r)
 =
 \frac{n}{(1-r^2)^2}
 \left(
 1-\frac{(n-2)(n-4)}{n(n+2)}r^2
 \right).
\end{equation}
\end{proposition}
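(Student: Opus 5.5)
The plan is to compute $\mathcal I_n(x)$ through the inversion change of variables already used in the proof of Proposition~\ref{local:prop:q-less-than-one}. By rotational covariance, $\mathcal I_n(Rx)=R\,\mathcal I_n(x)R^T$ for every rotation $R$. Hence it suffices to compute $\mathcal I_n(x)$ at an arbitrary $x$ with $|x|=r$, and its form is forced to be $\alpha(r)I+\beta(r)xx^T$. The eigenvalue statements \eqref{local:eq:lambda-T-new} and \eqref{local:eq:lambda-R-new} then follow at once: on $x^\perp$ the eigenvalue is $\alpha$, and on $x$ it is $\alpha+\beta r^2$. A quick check gives
\[
 1-\frac{n-2}{n+2}r^2+\frac{4(n-2)}{n(n+2)}r^2=1-\frac{(n-2)(n-4)}{n(n+2)}r^2 .
\]
So the whole content is \eqref{local:eq:I-explicit-new}.

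Write $A=1-r^2$. Using $\mathcal I_n(x)=\int P\,S_xS_x^T\,d\sigma$, together with \eqref{local:eq:harmonic-measure-change} and \eqref{local:eq:score-omega}, I get
\[
 \mathcal I_n(x)=\frac1{A^2}\int_{\Sph^{n-1}}|x+\omega|^{2-n}\bigl(n\omega+(n-2)x\bigr)\bigl(n\omega+(n-2)x\bigr)^T\,d\sigma(\omega).
\]
It therefore remains to compute the zeroth, first and second moments of $\omega$ against the weight $w_x(\omega)=|x+\omega|^{2-n}$. For these I use the Gegenbauer generating function. Set $\lambda=(n-2)/2$ and $t=\langle\omega,\hat x\rangle$. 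Then
\[
 |\omega+x|^{2-n}=\sum_{k\ge0}r^kC_k^{\lambda}(-t),
\]
and the $k$th term is the degree-$k$ spherical-harmonic component. Orthogonality gives the following moments.
\begin{itemize}
 \item The zeroth moment is $\int w_x\,d\sigma=1$, the mean-value property of the fundamental solution on a sphere enclosing its pole.
 \item The first moment uses only $C_1^\lambda(s)=(n-2)s$:
 \[
 \int w_x\,\omega\,d\sigma=-(n-2)\int\langle\omega,x\rangle\,\omega\,d\sigma=-\frac{n-2}{n}\,x .
 \]
 \item The second moment $\int w_x\,\omega\omega^T\,d\sigma$ sees only the degree-$0$ and degree-$2$ parts, the latter being $r^2C_2^\lambda(-t)$ with $C_2^\lambda(s)=2\lambda(\lambda+1)s^2-\lambda$. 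Writing $\omega\omega^T=\frac1nI+(\omega\omega^T-\frac1nI)$, only the degree-$2$ harmonic part of $r^2C_2^\lambda(-t)$ contributes to the traceless piece. The required integral is
 \[
 \int\bigl(\langle\omega,\hat x\rangle^2-\tfrac1n\bigr)\omega\omega^T\,d\sigma=\frac{2}{n(n+2)}\Bigl(\hat x\hat x^T-\frac1nI\Bigr),
 \]
 which follows from the fourth-moment formula $\int\omega_i\omega_j\omega_k\omega_l\,d\sigma=\frac{\delta_{ij}\delta_{kl}+\delta_{ik}\delta_{jl}+\delta_{il}\delta_{jk}}{n(n+2)}$.
\end{itemize}
This yields
\[
 \int w_x\,\omega\omega^T\,d\sigma=\frac1nI+\frac{(n-2)}{n(n+2)}\bigl(r^2\hat x\hat x^T\cdot n-r^2I\bigr),
\]
up to the exact constants, which I will track carefully.

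Finally I expand
\[
 n^2\!\int w_x\,\omega\omega^T\,d\sigma+n(n-2)\Bigl(x\!\int w_x\,\omega^T\,d\sigma+\int w_x\,\omega\,d\sigma\;x^T\Bigr)+(n-2)^2xx^T .
\]
The cross terms contribute $-2(n-2)^2xx^T$. Collecting the $I$ and $xx^T$ coefficients and dividing by $A^2$ should give \eqref{local:eq:I-explicit-new}. As consistency checks I will verify $\tr\mathcal I_n(0)=n^2$, i.e.\ $\mathcal I_n(0)=nI$, and the case $n=2$, where $\mathcal I_2=2I/(1-r^2)^2$.

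The main obstacle is the second moment. One must correctly isolate the degree-$2$ component of $C_2^\lambda(-t)$, namely $2\lambda(\lambda+1)(t^2-\frac1n)$, separating it from the constant $\frac{2\lambda(\lambda+1)}{n}-\lambda$ that it also carries. One must then normalize the fourth moments on the sphere. An arithmetic slip here changes the $xx^T$ coefficient $\frac{4(n-2)}{n(n+2)}$, so I will double-check it against the radial eigenvalue obtained independently from the one-dimensional slicing formula at $x=re_n$.
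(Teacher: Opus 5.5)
Your proposal is correct, and it reaches the formula by a different route from the paper. The paper does not change variables. From $S=-2x/(1-|x|^2)-n(x-\eta)/|x-\eta|^2$ and $\mathbb E_xS=0$ it gets $\mathcal I_n=n^2C(x)-4xx^T/(1-|x|^2)^2$, where $C(x)=\mathbb E_x[(x-\eta)(x-\eta)^T/|x-\eta|^4]$. The second information identity $\mathcal I_n=-\mathbb E_x[D_x^2\log P]$ gives another linear relation in $C(x)$. Eliminating $C(x)$ leaves only the scalar $R(x)=\mathbb E_x[|x-\eta|^{-2}]$, which comes from applying $\Delta_x$ to $\int|x-\eta|^{-n}\,d\sigma=(1-|x|^2)^{-1}$.

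That argument needs no spherical harmonics and no fourth moments, only one Laplacian computation. Your inversion instead makes the score affine in $\omega$ and turns harmonic measure into the Newtonian weight $|x+\omega|^{2-n}\,d\sigma(\omega)$. So $\mathcal I_n$ is read off from weighted moments of order at most two. This is more transparent, and it exposes the same structure behind the score-sphere identity \eqref{local:eq:score-sphere-new}.

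Your constants are already exact. The weighted moments are $M_1=-\frac{n-2}{n}x$ and $M_2=\frac1nI+\frac{n-2}{n+2}\bigl(xx^T-\frac{r^2}{n}I\bigr)$. Assembling them gives $(1-r^2)^{-2}\bigl[n\bigl(1-\frac{n-2}{n+2}r^2\bigr)I+\frac{4(n-2)}{n+2}xx^T\bigr]$, which is \eqref{local:eq:I-explicit-new}.

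Two small points.
\begin{itemize}
\item The constant $\frac{2\lambda(\lambda+1)}{n}-\lambda$ that you flag as the main hazard vanishes for $\lambda=(n-2)/2$. It must, since $C_2^{(n-2)/2}(\langle\omega,\hat x\rangle)$ is already a degree-two zonal harmonic and the zeroth moment equals $1$. So that obstacle disappears.
\item The paper states \eqref{local:eq:harmonic-measure-change} without proof, so you should include the short check. The inversion scales $(n-1)$-dimensional surface measure by $\bigl((1-r^2)/|\eta-x|^2\bigr)^{n-1}$, and $|x+\omega|=(1-r^2)/|\eta-x|$. Together these give $|x+\omega|^{2-n}\,d\sigma(\omega)=(1-r^2)|\eta-x|^{-n}\,d\sigma(\eta)$.
\end{itemize}
The Gegenbauer series converges uniformly for $r<1$, which justifies termwise integration. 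For $n=2$ the expansion degenerates, but the weight is identically $1$ and the moments are immediate.
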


\begin{proof}
Put $D(x,\eta)=|x-\eta|^2$ and let $\mathbb E_x$ denote expectation with
respect to $d\mu_x(\eta)=P(x,\eta)d\sigma(\eta)$.  If
$S=\nabla_x\log P(x,\eta)$, then $\mathbb E_xS=0$ and
\[
 S
 =
 -\frac{2x}{1-|x|^2}
 -n\frac{x-\eta}{D(x,\eta)}.
\] 
Introduce the symmetric positive-semidefinite matrix
\[
 C(x)
 :=
 \mathbb E_x
 \left[
 \frac{(x-\eta)(x-\eta)^T}{D(x,\eta)^2}
 \right],
 \qquad D(x,\eta)=|x-\eta|^2,
\]
where \(\mathbb E_x\) denotes expectation with respect to
\(d\mu_x(\eta)=P(x,\eta)\,d\sigma(\eta)\). Equivalently,
\[
 C(x)
 =
 (1-|x|^2)
 \int_{\Sph^{n-1}}
 \frac{(x-\eta)(x-\eta)^T}{|x-\eta|^{n+4}}
 \,d\sigma(\eta).
\]
A direct expansion gives
\begin{equation}\label{local:eq:I-via-C-new}
 \mathcal I_n(x)
 =
 n^2C(x)-\frac{4xx^T}{(1-|x|^2)^2}.
\end{equation}
On the other hand, the standard information identity
$\mathcal I_n(x)=-\mathbb E_x[D_x^2\log P]$ gives
\[
 \mathcal I_n(x)
 =
 \left(
 \frac{2}{1-|x|^2}+nR(x)
 \right)I
 +\frac{4xx^T}{(1-|x|^2)^2}
 -2nC(x),
\]
where
\[
 R(x)
 :=
 \mathbb E_x\left[\frac1{D(x,\eta)}\right].
\]
Eliminating $C(x)$,
\begin{equation}\label{local:eq:I-via-R-new}
 \mathcal I_n(x)
 =
 \frac{n}{n+2}
 \left(
 \frac{2}{1-|x|^2}+nR(x)
 \right)I
 +
 \frac{4(n-2)}{n+2}
 \frac{xx^T}{(1-|x|^2)^2}.
\end{equation}

The normalization of the Poisson kernel gives
\[
 \int_{\Sph^{n-1}}|x-\eta|^{-n}\,d\sigma(\eta)
 =
 \frac1{1-|x|^2}.
\]
Applying the Euclidean Laplacian and using
$\Delta_x|x-\eta|^{-n}=2n|x-\eta|^{-n-2}$ yields, for $r=|x|$,
\[
 \int_{\Sph^{n-1}}|x-\eta|^{-n-2}\,d\sigma(\eta)
 =
 \frac{n-(n-4)r^2}{n(1-r^2)^3}.
\]
Hence
\[
 R(x)
 =
 \frac{n-(n-4)r^2}{n(1-r^2)^2}.
\]
Substitution in \eqref{local:eq:I-via-R-new} gives
\eqref{local:eq:I-explicit-new}.  The eigenvalue formulas follow immediately.
\end{proof}

\begin{corollary}[Explicit information bound]
\label{local:cor:euclidean-jacobian}
Let $f:\B^n\to\B^n$ be harmonic and put $r=|x|$.  Then
\begin{equation}\label{local:eq:Theta-bound-new}
 |\det Df(x)|
 \le
 \frac{\Theta_n(r)}{(1-r^2)^n},
\end{equation}
where
\begin{equation}\label{local:eq:Theta-definition-new}
 \Theta_n(r)
 :=
 \left(1-\frac{n-2}{n+2}r^2\right)^{(n-1)/2}
 \left(
 1-\frac{(n-2)(n-4)}{n(n+2)}r^2
 \right)^{1/2}.
\end{equation}
In particular,
\begin{equation}\label{local:eq:simple-euclidean-bound-new}
 |\det Df(x)|
 \le
 \frac1{(1-|x|^2)^n}.
\end{equation}
At the origin the first estimate is sharp.
\end{corollary}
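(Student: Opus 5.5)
The corollary follows from Theorem~\ref{local:thm:euclidean-poisson-information} once the determinant of the Poisson-information matrix is known, and Proposition~\ref{local:prop:euclidean-poisson-information} supplies that determinant. The plan is to substitute the explicit eigenvalues, simplify, and then handle the two auxiliary claims: the simplified bound and sharpness at the origin.

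First, by Proposition~\ref{local:prop:euclidean-poisson-information}, the matrix $\mathcal I_n(x)$ has the tangential eigenvalue $\lambda_T(r)$ with multiplicity $n-1$ and the radial eigenvalue $\lambda_R(r)$. Hence
\[
 \det\mathcal I_n(x)=\lambda_T(r)^{n-1}\lambda_R(r)
 =\frac{n^n}{(1-r^2)^{2n}}\Theta_n(r)^2 .
\]
For $x=0$ the matrix is $nI$, consistent with this formula. Inserting it into \eqref{local:eq:information-bound-new}, the factor $n^{-n/2}$ cancels $n^{n/2}$, and we obtain \eqref{local:eq:Theta-bound-new} directly.

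Second, for \eqref{local:eq:simple-euclidean-bound-new} it suffices to show $\Theta_n(r)\le1$. The first factor is at most $1$ because $(n-2)/(n+2)\ge0$. For the second factor, if $n\ge4$ the coefficient $(n-2)(n-4)/(n(n+2))$ is nonnegative, so the factor is at most $1$. The remaining cases need a separate check:
\begin{itemize}
 \item For $n=2$, $\Theta_2\equiv1$.
 \item For $n=3$, the second factor equals $(1+r^2/15)^{1/2}$, which exceeds $1$, so we bound the product directly:
 \[
 \Theta_3(r)^2=\Bigl(1-\frac{r^2}5\Bigr)^2\Bigl(1+\frac{r^2}{15}\Bigr)\le\Bigl(1-\frac{r^2}5\Bigr)\Bigl(1+\frac{r^2}{15}\Bigr)\le1,
 \]
 since $(1-a)(1+a/3)\le1$ for $a\in[0,1]$.
\end{itemize}
This small-dimensional case is the only place where some care is needed.

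Finally, sharpness at the origin. At $r=0$ the bound reads $|\det Df(0)|\le1$, and the map $f(x)=x$ attains it. I would also cross-check this against Theorem~\ref{local:thm:sharp-radial-euclidean-profile} at $r=0$: by symmetry $q_n(0)=1$, and the extremal derivative is a multiple of an orthogonal matrix, consistent with the identity map being extremal.
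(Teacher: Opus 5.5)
Your proof is correct and follows the paper's own argument: write $\det\mathcal I_n(x)=\lambda_T(r)^{n-1}\lambda_R(r)$, insert it into the Poisson-information bound, and verify $\Theta_n(r)\le1$ separately for $n=2$, $n\ge4$ and $n=3$. The only difference is in the $n=3$ check, where your estimate via $(1-a)(1+a/3)\le1$ replaces the paper's exact expansion $\Theta_3(r)^2-1=s(s^2+5s-125)/375$; your explicit use of the identity map for sharpness at the origin is also fine, since the paper leaves that step implicit.
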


\begin{proof}
Since $\lambda_T(r)$ has multiplicity $n-1$,
\[
 \det\mathcal I_n(x)
 =
 \lambda_T(r)^{n-1}\lambda_R(r).
\]
Theorem~\ref{local:thm:euclidean-poisson-information} therefore gives
\eqref{local:eq:Theta-bound-new}.  For $n=2$, $\Theta_2(r)=1$.  For $n\ge4$,
both factors in \eqref{local:eq:Theta-definition-new} are at most $1$.  If $n=3$,
then, with $s=r^2$,
\[
 \Theta_3(r)^2-1
 =
 \frac{s(s^2+5s-125)}{375}<0,
 \qquad 0<s<1.
\]
Thus $\Theta_n(r)\le1$ in every dimension.
\end{proof}

\begin{proposition}[Strictness of the quadratic relaxation]
\label{local:prop:euclidean-information-rigidity}
If $n\ge3$ and $0<r<1$, then
\begin{equation}\label{local:eq:Jn-strict-information}
 \mathcal J_n(r)
 <
 \frac{\Theta_n(r)}{(1-r^2)^n}.
\end{equation}
The explicit information bound is therefore strictly larger than the
Jacobian supremum in this regime.
\end{proposition}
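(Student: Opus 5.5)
We need $\mathcal J_n(r)<\Theta_n(r)/(1-r^2)^n$ for $n\ge3$, $0<r<1$. The plan is to trace where the chain of inequalities in the proof of Theorem~\ref{local:thm:euclidean-poisson-information} can be tight, and show that tightness is incompatible with the strict anisotropy of Proposition~\ref{local:prop:q-less-than-one}.

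By Theorem~\ref{local:thm:sharp-radial-euclidean-profile}, $\mathcal J_n(r)=n^{-n}\mathcal L_x(C_*)^n/\det C_*$ with $C_*=\operatorname{diag}(1,\dots,1,q_n(r))$ and $x=re_n$. The minimum is attained there and is unique up to scaling within the $K$-invariant class. The relaxation gives, for every $C>0$,
\[
 \mathcal L_x(C)^2\le \tr(C^2\mathcal I_n(x)),
\qquad
 \tr(C^2\mathcal I_n(x))\ge n(\det C)^{2/n}(\det\mathcal I_n(x))^{1/n}.
\]
Corollary~\ref{local:cor:euclidean-jacobian} then equals $n^{-n}\min_C \tr(C^2\mathcal I_n)^{n/2}/\det C$. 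Its minimizer is $C_I=\mathcal I_n(x)^{-1/2}$, up to scale, and the AM--GM step is an equality exactly there. Therefore
\[
 \mathcal J_n(r)\le n^{-n}\frac{\mathcal L_x(C_I)^n}{\det C_I}\le n^{-n}\frac{\tr(C_I^2\mathcal I_n)^{n/2}}{\det C_I}=\frac{\Theta_n(r)}{(1-r^2)^n},
\]
and it suffices to show that the Cauchy--Schwarz step is strict at $C=C_I$.

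Equality in Cauchy--Schwarz would require $|C_Ig_x(\eta)|/\sqrt{P(x,\eta)}$ to be proportional to $\sqrt{P(x,\eta)}$, that is, $|C_I S_x(\eta)|$ constant in $\eta$, with $S_x=\nabla_x\log P$. The score identity \eqref{local:eq:score-omega} gives $S_x=(n\omega+(n-2)x)/A$, where $\omega$ ranges over the whole sphere. Here $C_I=\operatorname{diag}(\alpha,\dots,\alpha,\beta)$ with $\alpha=\lambda_T^{-1/2}$ and $\beta=\lambda_R^{-1/2}$. We would then need $|C_I(n\omega+(n-2)re_n)|^2$ to be constant on the sphere. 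This quadratic-plus-linear function of $\omega$ contains the linear term $2n(n-2)r\beta^2\omega_n$, which is nonzero for $n\ge3$ and $r>0$. So it cannot be constant, and the Cauchy--Schwarz step at $C_I$ is strict.

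The main point to check is that nothing else could give equality. No other $C$ competes, because the chain is applied at the fixed $C_I$ and strictness there already yields $\mathcal J_n(r)<\Theta_n(r)/(1-r^2)^n$. As an optional supplementary remark, one can also use Proposition~\ref{local:prop:q-less-than-one}. Since $q_n(r)<1$, $C_*$ has a smaller radial entry, whereas $\lambda_R>\lambda_T$ by \eqref{local:eq:lambda-T-new}--\eqref{local:eq:lambda-R-new} for $n\ge3$. Hence $C_I$ also has a smaller radial entry, and the two bounds differ genuinely even in the direction of the optimizer. This is not needed for the strict inequality; the linear-term argument above is the core.
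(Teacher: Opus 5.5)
Your proof is correct and follows the paper's own argument. You show that the Cauchy--Schwarz step is strict at $C_I\propto\mathcal I_n(x)^{-1/2}$, because equality would force $|C_IS_x|$ to be constant, and the odd linear term $2n(n-2)\,\omega^TC_I^2x$ in the score-sphere parametrization rules that out (the paper phrases this as $Mc=0\Rightarrow c=0$). The closing remark about $q_n(r)<1$ is unnecessary, and its conclusion does not follow because $C_*$ and $C_I$ both have the smaller radial entry, so you can drop it along with the opening mention of anisotropy.
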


\begin{proof}
Fix $x\ne0$.  In the proof of Theorem~\ref{local:thm:euclidean-poisson-information},
the quadratic minimization is attained by
$C_0^2$ proportional to $\mathcal I_n(x)^{-1}$.  If the exact profile equaled
the information bound, then Cauchy--Schwarz would also be an equality for this
$C_0$.  Writing
\[
 S_x(\eta):=\nabla_x\log P(x,\eta),
\]
this would force $|C_0S_x(\eta)|$ to be constant on the sphere.

The score locus can be computed explicitly.  From
\[
 S_x(\eta)
 =
 -\frac{2x}{1-|x|^2}
 +n\frac{\eta-x}{|\eta-x|^2}
\]
and inversion of $\Sph^{n-1}$ one obtains
\begin{equation}\label{local:eq:score-sphere-new}
 \left|
 S_x(\eta)-\frac{n-2}{1-|x|^2}x
 \right|
 =
 \frac{n}{1-|x|^2}.
\end{equation}
Thus the score sphere has center
\[
 c=\frac{n-2}{1-|x|^2}x.
\]
If $M=C_0^2>0$ and $|C_0S_x|$ were constant, then for some $R>0$,
\[
 (c+R\omega)^TM(c+R\omega)
\]
would be independent of $\omega\in\Sph^{n-1}$.  Comparing $\omega$ and
$-\omega$ gives $c^TM\omega=0$ for every $\omega$, hence $Mc=0$.  Since
$M>0$, this forces $c=0$, i.e. $(n-2)x=0$, a contradiction.
\end{proof}

\subsection{The sharp planar case}

In dimension two the variational profile collapses to the familiar conformally
invariant formula.

\begin{theorem}[Sharp planar Jacobian estimate]
\label{local:thm:planar-jacobian}
Let $f:\mathbb D\to\mathbb D$ be harmonic.  Then
\begin{equation}\label{local:eq:planar-jacobian-new}
 |\det Df(z)|
 \le
 \frac1{(1-|z|^2)^2},
 \qquad z\in\mathbb D.
\end{equation}
The estimate is sharp at every prescribed point.  Equivalently,
\begin{equation}\label{local:eq:J2-exact}
 \mathcal J_2(r)=\frac1{(1-r^2)^2}.
\end{equation}
In the one-parameter formula \eqref{local:eq:Jn-exact-one-parameter}, the unique
minimizer is $q_2(r)=1$.
\end{theorem}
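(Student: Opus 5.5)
The upper bound is already available.  For $n=2$, Corollary~\ref{local:cor:euclidean-jacobian} gives $\Theta_2(r)=1$, so every harmonic $f:\mathbb D\to\mathbb D$ satisfies $|\det Df(z)|\le(1-|z|^2)^{-2}$.  Hence $\mathcal J_2(r)\le(1-r^2)^{-2}$.  It then remains to show sharpness at every point and to identify the minimizer in \eqref{local:eq:Jn-exact-one-parameter} as $q_2(r)=1$.

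For sharpness I use that, in the plane, precomposition with disk automorphisms preserves harmonicity.  Fix $z_0$ and let $\varphi(z)=(z+z_0)/(1+\overline{z_0}z)$, so that $\varphi(0)=z_0$ and $|\varphi'(0)|=1-|z_0|^2$.  Take $f=\varphi^{-1}$, which is holomorphic, hence harmonic, and maps $\mathbb D$ onto itself.  Then
\[
 \det Df(z_0)=|(\varphi^{-1})'(z_0)|^2=(1-|z_0|^2)^{-2}.
\]
This proves equality at every prescribed point, and therefore \eqref{local:eq:J2-exact}.

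To identify the minimizer, I use Theorem~\ref{local:thm:sharp-radial-euclidean-profile} together with \eqref{local:eq:J2-exact}, which give
\[
 \min_{q>0}\frac{L_{2,r}(q)^2}{q}=\frac{4}{(1-r^2)^2}.
\]
I then evaluate at $q=1$.  For $n=2$ we have $N_{2,r}(t)=2t-4r+2r^2t$, and the score identity \eqref{local:eq:score-sphere-new} with $n=2$ gives $|S_x(\eta)|=2/(1-r^2)$ for every $\eta$.  Since $g_r=PS_x$, this yields
\[
 L_{2,r}(1)=\int_{\mathbb S^1}|g_r|\,d\sigma=\frac{2}{1-r^2}\int_{\mathbb S^1} P\,d\sigma=\frac{2}{1-r^2}.
\]
So $q=1$ attains the minimum value, and the uniqueness statement of Theorem~\ref{local:thm:sharp-radial-euclidean-profile} gives $q_2(r)=1$.

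An alternative route is to check the stationarity condition \eqref{local:eq:q-stationarity}, namely $2L'(1)=L(1)$.  Because $|S_x|$ is constant, this reduces to showing $\int P\,(2S_{x,2}^2-|S_x|^2)/|S_x|\,d\sigma=0$, which follows from the isotropy of the score distribution under $P\,d\sigma$ when $n=2$.  That isotropy is given by \eqref{local:eq:harmonic-measure-change} and \eqref{local:eq:score-omega}, since the center $(n-2)x/A$ vanishes and the weight $|x+\omega|^{0}=1$.  The main step needing care is this identification of $q_2$, but the constant-modulus score computation makes it direct.
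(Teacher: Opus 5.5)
Your proposal is correct and follows the paper's proof step for step: the upper bound comes from $\Theta_2\equiv1$, sharpness comes from a disk automorphism sending the prescribed point to the origin, and $q_2(r)=1$ follows from $L_{2,r}(1)=2/(1-r^2)$ together with uniqueness of the minimizer. The only cosmetic difference is how you evaluate $L_{2,r}(1)$: you use the constant-modulus score identity $|S_x|=2/(1-r^2)$ for $n=2$, while the paper cites a direct evaluation of the one-dimensional integral, where the same fact appears as $4(1-r^2)^2(1-t^2)+N_{2,r}(t)^2=4D_r(t)^2$.
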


\begin{proof}
The estimate follows from Corollary~\ref{local:cor:euclidean-jacobian}, since
$\Theta_2(r)=1$.  For sharpness, fix $a\in\mathbb D$ and take a conformal
automorphism $\psi$ of $\mathbb D$ with $\psi(a)=0$.  Then
\[
 |\psi'(a)|=\frac1{1-|a|^2}
\]
and hence
\[
 |\det D\psi(a)|
 =
 \frac1{(1-|a|^2)^2}.
\]
This proves \eqref{local:eq:J2-exact}.  A direct calculation in
\eqref{local:eq:Lnrq-one-dimensional} gives
\[
 L_{2,r}(1)=\frac{2}{1-r^2}.
\]
Thus $q=1$ attains the minimum in \eqref{local:eq:Jn-exact-one-parameter}; by
uniqueness, $q_2(r)=1$.
\end{proof}

\begin{corollary}[Simply connected planar source domains]
\label{local:cor:planar-conformal-radius}
Let $\Omega\subsetneq\mathbb C$ be simply connected and let
$f:\Omega\to\mathbb D$ be harmonic.  If $r_\Omega(z)$ denotes the conformal
radius of $\Omega$ at $z$, then
\begin{equation}\label{local:eq:conformal-radius-bound-new}
 |\det Df(z)|
 \le
 \frac1{r_\Omega(z)^2}.
\end{equation}
The estimate is sharp.
\end{corollary}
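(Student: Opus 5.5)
The plan is to reduce to the disk by a Riemann map and transport Theorem~\ref{local:thm:planar-jacobian} with the chain rule. Fix $z\in\Omega$. Since $\Omega\subsetneq\mathbb C$ is simply connected, the Riemann mapping theorem gives a conformal bijection $\phi:\mathbb D\to\Omega$ with $\phi(0)=z$. The conformal radius is then $r_\Omega(z)=|\phi'(0)|$.

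Put $g=f\circ\phi:\mathbb D\to\mathbb D$. In the plane, harmonicity is preserved under precomposition with holomorphic maps, since $\Delta(u\circ\phi)=|\phi'|^2(\Delta u)\circ\phi$ for each component $u$. Hence $g$ is a harmonic self-map of the disk. The chain rule gives
\[
 Dg(0)=Df(z)\,D\phi(0),
 \qquad
 \det D\phi(0)=|\phi'(0)|^2.
\]
Applying \eqref{local:eq:planar-jacobian-new} at the origin yields
\[
 |\det Df(z)|\,|\phi'(0)|^2=|\det Dg(0)|\le1,
\]
which is exactly \eqref{local:eq:conformal-radius-bound-new}.

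For sharpness at the prescribed point $z$, I take $f=\phi^{-1}:\Omega\to\mathbb D$. This map is holomorphic, hence harmonic, and satisfies $|\det Df(z)|=|(\phi^{-1})'(z)|^2=|\phi'(0)|^{-2}=r_\Omega(z)^{-2}$. So equality holds.

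There is no real obstacle here. The only points to state carefully are the normalization of the conformal radius, namely $r_\Omega(z)=|\phi'(0)|$ for a Riemann map with $\phi(0)=z$, which is independent of the choice of $\phi$ because two such maps differ by a rotation, and the fact that this argument is specific to $n=2$: in higher dimensions harmonicity is not preserved under conformal changes of variable.
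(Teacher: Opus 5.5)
Your proposal is correct and follows the paper's proof essentially verbatim. Both pull back by a Riemann map $\phi:\mathbb D\to\Omega$ with $\phi(0)=z$, apply the planar bound $|\det Dg(0)|\le 1$ to $g=f\circ\phi$, and use $|\det D\phi(0)|=r_\Omega(z)^2$. Your extremal $f=\phi^{-1}$ is in fact stated more precisely than the paper's ``a conformal map from $\Omega$ onto $\mathbb D$'', since equality at $z$ requires the conformal map to send $z$ to $0$.
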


\begin{proof}
Choose a conformal map $\phi:\mathbb D\to\Omega$ with $\phi(0)=z$.  By
definition, $r_\Omega(z)=|\phi'(0)|$.  The map $g=f\circ\phi$ is harmonic and
$|\det Dg(0)|\le1$.  Since
\[
 \det Dg(0)=\det Df(z)\det D\phi(0),
 \qquad
 |\det D\phi(0)|=r_\Omega(z)^2,
\]
we obtain \eqref{local:eq:conformal-radius-bound-new}.  Equality is attained by a
conformal map from $\Omega$ onto $\mathbb D$.
\end{proof}

\begin{remark}
Sharp Schwarz--Pick-type inequalities for planar harmonic diffeomorphisms,
with geometric constraints on the target, were also obtained by Kalaj
\cite{Kalaj2019Sharp}.
\end{remark}

The centered Euclidean estimate also follows directly from Theorem~\ref{thm:main}.  Letting $r\downarrow0$ in that theorem
gives
\[
 \int_{\Sph^{n-1}}|Df(0)^T\xi|\,d\sigma(\xi)\le1,
 \qquad
 |\det Df(0)|\le1,
\]
with equality for orthogonal maps.  The exact profile above shows what replaces
this centered determinant statement away from the origin.  In contrast, the
hyperbolic-harmonic invariance used in Theorem~\ref{thm:hyp-differential-intro} produces
a closed sharp all-point formula without the auxiliary minimization parameter.

\section{Two structural applications beyond harmonic mappings}
\label{sec:structural-applications}

We apply the zonal theorem to the Henyey--Greenstein scattering operator
and the trimmed-body theorem to zonoid depth regions.  Both applications
use the same sharp constants as the preceding results.

\subsection{Henyey--Greenstein scattering and geometric angular mixing}

The Henyey--Greenstein phase function was introduced in astrophysical
scattering \cite{HenyeyGreenstein} and remains a standard one-parameter
model for anisotropic radiative scattering; it is also widely used in
biomedical optics \cite{CalabroBigio}.  We restrict to the forward-scattering
regime $0\le g<1$, for which the angular kernel is decreasing in the
geodesic distance and therefore lies in the scope of
Theorem~\ref{thm:zonal-main}.  Its density with respect to the solid-angle
measure $d\Omega$ on $\mathbb S^2$ is
\[
 p_g(\xi,\eta)
 =
 \frac{1}{4\pi}
 \frac{1-g^2}
 {(1+g^2-2g\langle\xi,\eta\rangle)^{3/2}}.
\]
With our normalized spherical probability measure
$d\sigma=d\Omega/(4\pi)$, the associated Markov operator therefore takes
the form
\begin{equation}\label{eq:HG-operator}
 \mathsf H_gF(\xi)
 =
 \int_{\mathbb S^2}
 \frac{1-g^2}
 {(1+g^2-2g\langle\xi,\eta\rangle)^{3/2}}
 F(\eta)\,d\sigma(\eta).
\end{equation}
The kernel in \eqref{eq:HG-operator} is exactly the Poisson kernel of
$\mathbb B^3$ at radius $g$.

\begin{corollary}[Sharp geometric mixing for Henyey--Greenstein scattering]
\label{cor:HG}
Let $F:\mathbb S^2\to\overline{\mathbb B^3}$ be measurable and let
$0\le g<1$.  Put
\[
 K_g^{\mathrm{HG}}
 =
 \operatorname{co}\mathsf H_gF(\mathbb S^2).
\]
Then
\begin{equation}\label{eq:HG-width}
 w(K_g^{\mathrm{HG}})\le2g.
\end{equation}
Moreover, for $j=1,2,3$,
\begin{equation}\label{eq:HG-intrinsic}
 V_j(K_g^{\mathrm{HG}})
 \le
 g^j V_j(\overline{\mathbb B^3}).
\end{equation}
For $0<g<1$, equality in \eqref{eq:HG-width} holds if and only if
$F(\eta)=Q\eta$ almost everywhere for some $Q\in O(3)$.
\end{corollary}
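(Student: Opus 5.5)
The plan is to treat Corollary~\ref{cor:HG} as the $n=3$ instance of the Poisson specialization. The kernel in \eqref{eq:HG-operator} is, by direct comparison with \eqref{eq:prho} for $n=3$, exactly $p_g(d(\xi,\eta))$ with $\rho=g$, because $\langle\xi,\eta\rangle=\cos d(\xi,\eta)$. Hence $\mathsf H_g=T_{p_g}$ acting on $\overline{\mathbb B^3}$-valued data on $\mathbb S^2$.

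\textbf{Case $0<g<1$.} The kernel $p_g$ is continuous and strictly decreasing on $[0,\pi]$. Corollary~\ref{cor:PoissonWidth} applies with $n=3$ and $\rho=g$. It gives $w(K_g^{\mathrm{HG}})\le 2g$, with equality if and only if $F(\eta)=Q\eta$ almost everywhere for some $Q\in O(3)$. Equivalently, one can invoke Theorem~\ref{thm:zonal-main} and Theorem~\ref{thm:zonal-rigidity} directly, using $\lambda_1(p_g)=g$ from \eqref{eq:intro-euclidean-first-multiplier}. This settles \eqref{eq:HG-width} and the equality statement.

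\textbf{Case $g=0$.} Here $p_0\equiv1$, so $\mathsf H_0F$ is the constant vector $\int F\,d\sigma$. Its convex hull is a single point, of mean width $0=2g$. The kernel is constant, hence nonincreasing, and Theorem~\ref{thm:zonal-main} covers this case as well. The statement excludes $g=0$ from the rigidity assertion, so nothing more is needed; indeed every $F$ gives equality there.

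\textbf{Intrinsic volumes.} For \eqref{eq:HG-intrinsic}, combine the width bound with the extended Urysohn inequality \eqref{eq:extended-Urysohn} and \eqref{eq:V1-meanwidth}:
\[
\bigl(V_j(K)/V_j(\overline{\mathbb B^3})\bigr)^{1/j}\le w(K)/2\le g .
\]
When $K$ is lower-dimensional, approximate by $K+\varepsilon\overline{\mathbb B^3}$. When $g=0$, $K$ is a point, so $V_j(K)=0$ for $j\ge1$.

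The only step requiring care is the kernel identification. The phase function is stated as a density for solid angle, and one must check that passing to $d\sigma=d\Omega/(4\pi)$ exactly absorbs the factor $1/(4\pi)$. Then $\mathsf H_g$ is Markov and coincides with the normalized Poisson kernel used throughout. Everything else is a direct citation.
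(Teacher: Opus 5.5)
Your proposal is correct and matches the paper's proof essentially step for step. You identify $\mathsf H_g$ with the three-dimensional Poisson operator at radius $g$ and treat $g=0$ separately via the constant kernel and singleton hull. You then apply Corollary~\ref{cor:PoissonWidth} for the width bound and rigidity when $0<g<1$, and obtain the intrinsic-volume bounds from \eqref{eq:extended-Urysohn}.
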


\begin{proof}
If $g=0$, then the kernel in \eqref{eq:HG-operator} is identically $1$, so
\[
 \mathsf H_0F(\xi)=\int_{\mathbb S^2}F\,d\sigma
\]
is constant.  Hence $K_0^{\mathrm{HG}}$ is a singleton and
$w(K_0^{\mathrm{HG}})=V_j(K_0^{\mathrm{HG}})=0$ for $j=1,2,3$, which gives
\eqref{eq:HG-width}--\eqref{eq:HG-intrinsic} at the endpoint $g=0$.
Now assume $0<g<1$.  Equation \eqref{eq:HG-operator} is the
three-dimensional Poisson operator $P_g$.  Corollary~\ref{cor:PoissonWidth}
gives \eqref{eq:HG-width} and its rigidity statement.  The
intrinsic-volume bounds follow from \eqref{eq:extended-Urysohn}.
\end{proof}

The Poisson multipliers on spherical harmonics of degree $\ell$ are
$g^\ell$; see, for example, \cite{AxlerBourdonRamey,DaiXu}.  Consequently
the family $\{\mathsf H_g:0\le g<1\}$ satisfies
the multiplicative semigroup law
\begin{equation}\label{eq:HG-semigroup}
 \mathsf H_g\mathsf H_h=\mathsf H_{gh}.
\end{equation}
Consequently, for successive scattering steps with parameters
$g_1,\ldots,g_m$, the same bounds hold with $g$ replaced by
$G_m=\prod_{q=1}^m g_q$.

Thus the physical anisotropy parameter $g$, which is the first angular
moment of the Henyey--Greenstein phase function, is also the exact
convex-geometric contraction factor for bounded three-component angular
observables.  The statement is deterministic and does not require a
particular transport equation beyond the angular scattering operator
itself.

\subsection{Zonoid depth regions and multivariate expected shortfall}

The trimmed bodies in Section~3 are the geometric core of zonoid trimming.
Koshevoy and Mosler introduced zonoid trimmed regions as affine-equivariant
central regions for multivariate distributions \cite{KoshevoyMosler1997};
see also \cite{Mosler}.  In the risk-theoretic framework of Cascos and
Molchanov, zonoid trimming yields the multivariate counterpart of expected
shortfall \cite{CascosMolchanov}.  Our sharp spherical trimmed-body
inequality therefore gives dimension-dependent optimal envelopes for these
regions under a bounded-support assumption.

Let $X$ be an $\mathbb R^n$-valued random vector.  For
$0<\alpha\le1$, define its zonoid $\alpha$-trimmed region by
\begin{equation}\label{eq:zonoid-region}
 D_\alpha(X)
 =
 \left\{
 \mathbb E[\gamma X]:
 0\le\gamma\le\frac1\alpha,\quad
 \mathbb E\gamma=1
 \right\}.
\end{equation}
The definition depends only on the law of $X$: replacing $\gamma$ by
$\mathbb E[\gamma\mid X]$ preserves its bounds, its expectation, and
$\mathbb E[\gamma X]$.

\begin{corollary}[Sharp envelopes for bounded zonoid regions]
\label{cor:zonoid-envelope}
Let $n\ge2$ and suppose that
\[
 X\in b+R\overline{\mathbb B^n}
 \qquad\text{almost surely},
\]
where $b\in\mathbb R^n$ and $R>0$.  Then for every
$0<\alpha\le1$,
\begin{equation}\label{eq:zonoid-width}
 w(D_\alpha(X))
 \le
 2R\frac{\Phi_n(\alpha)}{\alpha}.
\end{equation}
More generally, for $j=1,\ldots,n$,
\begin{equation}\label{eq:zonoid-intrinsic}
 V_j(D_\alpha(X))
 \le
 R^j
 \left(\frac{\Phi_n(\alpha)}{\alpha}\right)^j
 V_j(\overline{\mathbb B^n}).
\end{equation}
The constants are sharp: equality is attained when
$X=b+RU$ and $U$ is uniformly distributed on $\mathbb S^{n-1}$.
\end{corollary}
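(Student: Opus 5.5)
The plan is to reduce Corollary~\ref{cor:zonoid-envelope} to the probability-space form of the trimmed-body inequality (Remark~\ref{rem:probability-trimmed}) by an affine normalization and a change of density. First I use translation and scaling. Write $X=b+RY$ with $Y\in\overline{\mathbb B^n}$ almost surely. Any admissible $\gamma$ satisfies $\mathbb E\gamma=1$, so $\mathbb E[\gamma X]=b+R\,\mathbb E[\gamma Y]$. Hence
\[
 D_\alpha(X)=b+R\,D_\alpha(Y).
\]
Mean width is translation invariant and positively homogeneous of degree one. It therefore suffices to prove $w(D_\alpha(Y))\le 2\Phi_n(\alpha)/\alpha$.

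Next I rescale the density. Put $a=\alpha\gamma$. The conditions $0\le\gamma\le1/\alpha$ and $\mathbb E\gamma=1$ become $0\le a\le1$ and $\mathbb E a=\alpha$. Thus
\[
 D_\alpha(Y)=\frac1\alpha\,\mathcal Z^\Omega_\alpha(Y),
\]
where $\mathcal Z^\Omega_s$ is the set from Remark~\ref{rem:probability-trimmed} on the underlying probability space. That remark gives $w(\mathcal Z^\Omega_\alpha(Y))\le2\Phi_n(\alpha)$. Dividing by $\alpha$ yields \eqref{eq:zonoid-width}. One point to check here is that the proof of Theorem~\ref{thm:trimmed} really transfers to a general probability space. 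The quantile selector of Lemma~\ref{lem:quantile-selector} used only Fubini on the product of the target sphere with the source measure space, and the remark asserts this transfer, so I take it as given.

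For \eqref{eq:zonoid-intrinsic} I apply \eqref{eq:extended-Urysohn} to the compact convex set $D_\alpha(X)$:
\[
 V_j(D_\alpha(X))\le\Bigl(\tfrac{w(D_\alpha(X))}{2}\Bigr)^jV_j(\overline{\mathbb B^n})\le R^j\Bigl(\tfrac{\Phi_n(\alpha)}{\alpha}\Bigr)^jV_j(\overline{\mathbb B^n}).
\]
Compactness and convexity of $D_\alpha(X)$ follow exactly as for $\mathcal Z_s(F)$ in Section~3, by weak-$*$ compactness of the admissible densities.

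For sharpness, let $X=b+RU$ with $U$ uniform on $\mathbb S^{n-1}$. Since the definition depends only on the law, I may realize $U$ as the identity on $(\mathbb S^{n-1},\sigma)$. Then $\mathcal Z_\alpha^\Omega(U)=\mathcal Z_\alpha(\mathrm{id})$, which by the sharpness part of Theorem~\ref{thm:trimmed} is a ball with support function $\Phi_n(\alpha)$. So $D_\alpha(X)$ is the ball of radius $R\Phi_n(\alpha)/\alpha$ centered at $b$, and equality holds in every estimate, since balls are equality cases of \eqref{eq:extended-Urysohn}.

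The only step of any substance is the transfer of Theorem~\ref{thm:trimmed} to a general probability space; the remaining steps are bookkeeping.
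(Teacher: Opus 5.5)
Your proposal is correct and follows essentially the same route as the paper: normalize $Y=(X-b)/R$, substitute $a=\alpha\gamma$ to get $D_\alpha(X)=b+\frac{R}{\alpha}\mathcal Z^\Omega_\alpha(Y)$, invoke Remark~\ref{rem:probability-trimmed}, pass to intrinsic volumes via \eqref{eq:extended-Urysohn}, and obtain sharpness from the ball $\mathcal Z_\alpha(\mathrm{id})=\Phi_n(\alpha)\overline{\mathbb B^n}$. The differences are cosmetic: you split the affine normalization into two steps, and you apply \eqref{eq:extended-Urysohn} directly to $D_\alpha(X)$ rather than to $\mathcal Z^\Omega_\alpha(Y)$ followed by homogeneity and translation invariance.
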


\begin{proof}
Apply the probability-space form of the trimmed-body inequality from
Remark~\ref{rem:probability-trimmed}.  Put
\[
 Y=\frac{X-b}{R},
 \qquad |Y|\le1.
\]
If $a=\alpha\gamma$, then the constraints in
\eqref{eq:zonoid-region} become
\[
 0\le a\le1,
 \qquad
 \mathbb E a=\alpha,
\]
and therefore
\begin{equation}\label{eq:zonoid-trimmed-identity}
 D_\alpha(X)
 =
 b+\frac{R}{\alpha}\,\mathcal Z_\alpha^\Omega(Y).
\end{equation}
Mean width is translation invariant and homogeneous, so
Remark~\ref{rem:probability-trimmed} gives
\[
 w(D_\alpha(X))
 =
 \frac{R}{\alpha}w(\mathcal Z_\alpha^\Omega(Y))
 \le
 2R\frac{\Phi_n(\alpha)}{\alpha}.
\]
Applying \eqref{eq:extended-Urysohn} to
$\mathcal Z_\alpha^\Omega(Y)$ and using the homogeneity and translation
invariance of intrinsic volumes proves \eqref{eq:zonoid-intrinsic}.
If $Y$ is uniformly distributed on the sphere, then
$\Omega=\mathbb S^{n-1}$ and the sharpness part of
Theorem~\ref{thm:trimmed} gives
\[
 \mathcal Z_\alpha^\Omega(Y)
 =
 \Phi_n(\alpha)\overline{\mathbb B^n},
\]
which yields equality throughout.
\end{proof}

In dimension three, $\Phi_3(\alpha)=\alpha(1-\alpha)$, so the result takes
the particularly simple form
\begin{equation}\label{eq:zonoid-3d}
 w(D_\alpha(X))\le2R(1-\alpha),
 \qquad
 V_j(D_\alpha(X))
 \le
 R^j(1-\alpha)^jV_j(\overline{\mathbb B^3}).
\end{equation}
Thus bounded multivariate distributions have a sharp, distribution-free
convex-geometric envelope for every zonoid depth level.  Through the
zonoid--expected-shortfall correspondence of
\cite{CascosMolchanov}, the same inequalities control the geometry of the
associated set-valued risk regions.

\section*{Acknowledgements}
The work of D.~Zhong was supported by the Guangdong Basic and Applied Basic Research Foundation (Nos.~2022A1515110967 and 2023A1515011809).  D.~Kalaj gratefully acknowledges financial support from the Ministry of Education, Science and Innovation of Montenegro through the grants ``Mathematical Analysis, Optimisation and Machine Learning'' and ``Complex-analytic and geometric techniques for non-Euclidean machine learning: theory and applications.''

\section*{AI Declaration}
AI-assisted tools were used during the preparation of the manuscript for editorial, organizational, and typesetting assistance.  The authors take full responsibility for the mathematical statements, derivations, citations, and conclusions in the final manuscript.

\end{document}